\newtheorem{thm}{Theorem}[section]
\newtheorem{prop}[thm]{Proposition}
\newtheorem{lemma}[thm]{Lemma}
\newtheorem{cor}[thm]{Corollary}
\theoremstyle{definition}
\newtheorem{defi}[thm]{Definition}
\newcounter{casenum}
\newcommand*{\rom}[1]{\expandafter{\romannumeral #1\relax}}
\def\@cite#1#2{{\normalfont[{\bfseries#1\if@tempswa , #2\fi}]}}
\title{On the number of linear hypergraphs of large girth}
\author{J\'ozsef Balogh\thanks{Department of Mathematics, University of Illinois at Urbana-Champaign, Urbana, Illinois 61801, USA. Email: jobal@math.uiuc.edu. The first author is partially supported by NSF Grant DMS-1500121, the Arnold O.
Beckman Research Award (UIUC Campus Research Board 15006) and the Langan Scholor Fund (UIUC).
Work was done while the first author was a Visiting Fellow Commoner at Trinity College, Cambridge. } \ \ \ \ \ \ \ Lina Li\thanks{Department of Mathematics, University of Illinois at Urbana-Champaign, Urbana, Illinois 61801, USA. Email: linali2@illinois.edu. }}
\begin{document}
\maketitle

\begin{abstract}
An $r$-uniform \textit{linear cycle} of length $\ell$, denoted by $C_{\ell}^r$, is an $r$-graph with edges $e_1, \ldots, e_{\ell}$ such that for every $i\in [\ell-1]$, $|e_i\cap e_{i+1}|=1$, $|e_{\ell}\cap e_1|=1$ and $e_i\cap e_j=\emptyset$ for all other pairs $\{i, j\},\ i\neq j$. For every $r\geq 3$ and $\ell\geq 4$, we show that there exists a constant $C$ depending on $r$ and $\ell$ such that the number of linear $r$-graphs of girth $\ell$ is at most $2^{Cn^{1+1/\lfloor \ell/2\rfloor}}$. Furthermore, we extend the result for $\ell=4$, proving that there exists a constant $C$ depending on $r$ such that the number of linear $r$-graphs without $C_{4}^r$ is at most $2^{Cn^{3/2}}$. 

The idea of the proof is to reduce the hypergraph enumeration problems to some graph enumeration problems, and then apply a variant of the graph container method, which may be of independent interest. We extend a breakthrough result of Kleitman and Winston on the number of $C_4$-free graphs, proving that
the number of graphs containing at most $n^2/32\log^6 n$ $C_4$'s is at most $2^{11n^{3/2}}$, for sufficiently large $n$. We further show that for every $r\geq 3$ and $\ell\geq 2$, the number of graphs such that each of its edges is contained in only $O(1)$ cycles of length at most $2\ell$, is bounded by $2^{3(\ell+1)n^{1+1/\ell}}$ asymptotically.\\
\end{abstract}

\section{Introduction}
For an integer $r\geq 2$, an $r$-uniform hypergraph (or $r$-graph) $H=(V, E)$ consists of a set $V$ of vertices and a set $E$ of edges, where each edge is an $r$-element subset of $V$. For a family of $r$-graphs $\mathcal{H}$, the \textit{Tur\'an number} (\textit{function}) of $\mathcal{H}$, denoted by $\mathrm{ex}_r(n, \mathcal{H})$, is the maximum number of edges among $r$-graphs on $n$ vertices which contain no $r$-graph from $\mathcal{H}$ as a subgraph. 
Write $\mathrm{Forb}_r(n, \mathcal{H})$ for the set of $r$-graphs with vertex set $[n]$ which contain no $r$-graph from $\mathcal{H}$ as a subgraph. When $\mathcal{H}$ consists of a single graph $H$, we simply write $\mathrm{ex}_r(n, H)$ and $\mathrm{Forb}_r(n, H)$ instead. Since every subgraph of an $H$-free graph is also $H$-free, we have a trivial bound 
\begin{equation}\label{tribound}
2^{\mathrm{ex}_r(n, \mathcal{H})}\leq |\mathrm{Forb}_r(n, \mathcal{H})|\leq \sum_{i\leq \mathrm{ex}_r(n, \mathcal{H})}\binom{\binom {n}{r}}{i}\leq 2n^{r\cdot \mathrm{ex}_r(n, \mathcal{H})}.
\end{equation}

The study on determination of $|\mathrm{Forb}_r(n, H)|$ has a very rich history. 
Recently, the case when $H$ is a linear cycle received more attention.
For integers $r\geq 2$ and $\ell\geq 3$, an $r$-uniform \textit{linear cycle} of length $\ell$, denoted by $C_{\ell}^r$, is an $r$-graph with edges $e_1, \ldots, e_{\ell}$ such that for every $i\in [\ell-1]$, $|e_i\cap e_{i+1}|=1$, $|e_{\ell}\cap e_1|=1$ and $e_i\cap e_j=\emptyset$ for all other pairs $\{i, j\},\ i\neq j$. Kostochka, Mubayi and Verstra\"ete~\cite{KMV1}, and independently, F\"uredi and Jiang~\cite{ZJ} proved that for every $r, \ell\geq 3$, $\mathrm{ex}_r(n, C_{\ell}^r)=\Theta(n^{r-1})$. Then by (\ref{tribound}), we trivially have 
\begin{equation}\label{triboundC}
|\mathrm{Forb}_r(n, C_{\ell}^r)|=2^{\Omega(n^{r-1})} \text{ and } |\mathrm{Forb}_r(n, C_{\ell}^r)|=2^{O(n^{r-1}\log n)}
\end{equation}
for every $r, \ell\geq 3$. Guided and motivated by this development on the extremal numbers of linear cycles, recently, Mubayi and Wang~\cite{MW} showed that $|\mathrm{Forb}_3(n, C_{\ell}^3)|=2^{O(n^2)}$ for all even $\ell$ and improved the trivial upper bound in (\ref{triboundC}) for $r>3$. Inspired by Mubayi and Wang~\cite{MW}'s method, Han and Kohayakawa~\cite{HK} subsequently improved the general upper bound to $2^{O(n^{r-1}\log \log n)}$. Very recently, Balogh, Narayanan and Skokan~\cite{BNS} provided a balanced supersaturation theorem for linear cycles and finally proved $|\mathrm{Forb}_r(n, C_{\ell}^r)|=2^{O(n^{r-1})}$, for every $r, \ell\geq 3$, using the hypergraph container method~\cite{BMS, ST}.

In this paper, we study the enumeration problem of linear hypergraphs containing no linear cycle of fixed length. An $r$-graph $H$ is said to be \textit{linear} if for every $e, e'\in E(H)$, $|e\cap e'|\leq 1$. For a family of linear $r$-graphs $\mathcal{H}$, the \textit{linear Tur\'an number} of $\mathcal{H}$, denoted by $\mathrm{ex}_L(n, \mathcal{H})$, is the maximum number of edges among linear $r$-graphs on $n$ vertices which contain no $r$-graph from $\mathcal{H}$ as a subgraph. Write $\mathrm{Forb}_L(n, \mathcal{H})$ for the set of linear $r$-graphs with vertex set $[n]$ which contain no $r$-graph from $\mathcal{H}$ as a subgraph. Again, when $\mathcal{H}$ consists of a single graph $H$, we simply write $\mathrm{ex}_L(n, H)$ and $\mathrm{Forb}_L(n, H)$ instead. Similarly to (\ref{tribound}), a trivial bound on the size of $\mathrm{Forb}_L(n, H)$ is given as follows.
\begin{equation}\label{triboundL}
2^{\mathrm{ex}_L(n, \mathcal{H})}\leq |\mathrm{Forb}_L(n, \mathcal{H})|\leq \sum_{i\leq \mathrm{ex}_L(n, \mathcal{H})}\binom{\binom {n}{r}}{i}\leq 2n^{r\cdot \mathrm{ex}_L(n, \mathcal{H})}.
\end{equation}
It is known from the famous $(6, 3)$-problem that $n^{2-c\sqrt{\log n}}<\mathrm{ex}_L(n, C_3^3)=o(n^2)$, where the lower bound is given by Behrend~\cite{B} and the upper bound is given by Ruzsa and Szemer\'edi~\cite{RS}.
In 1968, Erd\H os, Frankl and R\" odl~\cite{EFR} showed that for every $r\geq 3$, $\mathrm{ex}_L(n, C_3^r)=o(n^2)$ and $\mathrm{ex}_L(n, C_3^r)=\Omega(n^c)$ for every $c<2$.
Using the so-called 2-fold Sidon sets, Lazebnik and Verstra\"ete~\cite{LV} constructed linear 3-graphs with girth 5 and $\Omega(n^{3/2})$ edges. On the other hand, it is not hard to show that $\mathrm{ex}_L(n, C_4^3)=O(n^{3/2})$. Hence, $\mathrm{ex}_L(n, C_4^3)=\Theta(n^{3/2})$. Kostochka, Mubayi, and Verstra\"ete~\cite{KMVper} proved $\mathrm{ex}_L(n, C_5^3)=\Theta(n^{3/2})$ and conjectured that 
\begin{equation*}
\mathrm{ex}_L(n, C^r_{\ell})=\Theta\left(n^{1+\frac{1}{\lfloor \ell/2 \rfloor}}\right)
\end{equation*}
 for every $r\geq 3$ and $\ell\geq 4.$ Later, Collier-Cartaino, Graber and Jiang~\cite{jiangturan} proved that $\mathrm{ex}_L(n, C^r_{\ell})=O\left(n^{1+\frac{1}{\lfloor \ell/2 \rfloor}}\right)$ for $r\geq 3$ and $\ell\geq 4$. Although the lower bound on the linear Tur\'an number of linear cycles is still far from what is conjectured, following the same logic with the usual Tur\'an problem of cycles, it is natural to guess that 
 \begin{equation}\label{conj}
 |\mathrm{Forb}_L(n, C^r_{\ell})|=2^{\Theta\left(n^{1+\frac{1}{\lfloor \ell/2 \rfloor}}\right)}
 \end{equation}
 for every $r\geq 3$ and $\ell\geq 4$. In this paper, we confirm the above conjecture for $\ell=4$.
\begin{thm}\label{4cycle}
For every $r\geq 3$ there exists $C=C(r)>0$ such that 
\begin{equation*}
|\mathrm{Forb}_L(n, C^r_{4})|\leq2^{Cn^{3/2}}.
\end{equation*}
\end{thm}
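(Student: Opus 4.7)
The plan is to reduce the enumeration of $H \in \mathrm{Forb}_L(n, C_4^r)$ to counting its $2$-shadow $G = G(H)$, the graph on $[n]$ whose edges are precisely the pairs $\{u,v\}$ contained in some hyperedge of $H$. Linearity forces each edge of $G$ to lie in a unique hyperedge, so $|E(G)| = \binom{r}{2}|E(H)|$, and since the theorem of Collier-Cartaino, Graber, and Jiang gives $\mathrm{ex}_L(n, C_4^r) = O(n^{3/2})$, we have $|E(G)| = O(n^{3/2})$. The rough plan is to bound the number of possible shadows using the variant of Kleitman and Winston's bound announced in the abstract (counting graphs with few $C_4$'s), and then separately bound the number of linear $r$-graphs producing each fixed shadow.

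The heart of the argument is to show that every $G$ arising as $G(H)$ contains only $O(n^{3/2})$ copies of $C_4$, which is well below the threshold $n^2/(32\log^6 n)$ required by the Kleitman-Winston extension. Fix a $4$-cycle $v_1 v_2 v_3 v_4$ in $G$ and let $e_1, e_2, e_3, e_4$ be the (unique) hyperedges containing its consecutive edges. If all four coincide, the $C_4$ lies inside a single $K_r$-clique, contributing at most $3\binom{r}{4}|E(H)| = O(n^{3/2})$ such $C_4$'s in total. If exactly two distinct hyperedges appear then, whether the occurrences are arranged as $(1,1,2,2)$ or $(1,2,1,2)$, a direct check yields $|e_i \cap e_j| \geq 2$ for the two hyperedges, contradicting linearity. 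Otherwise three or four distinct hyperedges appear, and one verifies that some subtriple of $\{e_1, e_2, e_3, e_4\}$ necessarily forms a linear triangle $C_3^r$ in $H$; conversely, a fixed linear triangle accounts for only $O(1)$ such $C_4$'s. It remains to bound the number of linear triangles in $H$ by $O(n^{3/2})$: here the $C_4^r$-free hypothesis is the crucial input --- two distinct linear triangles sharing a common hyperedge would, without further cross-intersections, splice into a forbidden $C_4^r$, so linearity limits each edge of $G$ to $O(1)$ linear triangles, making their total number $O(|E(G)|) = O(n^{3/2})$.

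With this bound in hand, the Kleitman-Winston extension stated in the abstract gives at most $2^{11 n^{3/2}}$ possible shadows. The same analysis shows that the common $G$-neighborhood of any pair $\{u, v\}$ has bounded size, so each edge of $G$ lies in only $O(1)$ copies of $K_r$ inside $G$; consequently the number of valid linear $K_r$-decompositions of $G$, i.e.\ the number of $H$ with $G(H) = G$, is at most $2^{O(n^{3/2})}$. Multiplying the two estimates yields $|\mathrm{Forb}_L(n, C_4^r)| \leq 2^{Cn^{3/2}}$. The main technical obstacle is this triangle-count / bounded-codegree step: everything hinges on extracting, from the forbidden configuration $C_4^r$ together with the linearity constraint, an absolute constant depending only on $r$ that bounds the number of linear paths of length two between any fixed pair of vertices. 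It is a pigeonhole argument once set up correctly, but it must be carried out so that the constants are genuinely uniform in $n$.
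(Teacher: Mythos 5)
Your proposal is correct and follows essentially the same route as the paper: reduce to the shadow graph $G=\partial_2(H)$, show $G$ has $O(n^{3/2})$ $4$-cycles by classifying each $4$-cycle as lying inside a single hyperedge or inside a $C_3^r$ and bounding the number of $C_3^r$'s per hyperedge using the $C_4^r$-free plus linearity hypotheses, apply Theorem~\ref{maintheoremC4} to bound the number of shadows, and then bound the fibers of $H\mapsto\partial_2(H)$ by counting $r$-cliques in $G$. Your case analysis for the $4$-cycle classification and the triangle-splicing argument are exactly the content of Lemmas~\ref{cyclecontaine}--\ref{c4}; the only organizational difference is that you bound the fiber size via a per-edge codegree argument (common $G$-neighborhood of an edge is $O(1)$, so each edge is in $O(1)$ $K_r$'s) whereas the paper's Lemma~\ref{numberkr} counts $r$-cliques per hyperedge of $H$, but both give the $O(n^{3/2})$ clique count needed. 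One small imprecision: you assert that the common $G$-neighborhood of \emph{any} pair $\{u,v\}$ has bounded size, but this is neither obviously true nor needed for non-edges; what you actually use, and what holds, is the bound when $uv\in E(G)$, since only then do the three incident hyperedges $e,f,g$ close up into a linear triangle and the per-hyperedge $C_3^r$ bound applies.
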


The upper bound for $C^3_4$ is sharp in order of magnitude given by $\mathrm{ex}_L(n, C_4^3)=\Theta(n^{3/2})$ and (\ref{triboundL}). In general, since the sharp bound of related linear Tur\'an number remains open, we are not able to confirm the sharpness now.  

For $\ell=3$, the work of Erd\H os, Frankl and R\" odl~\cite{EFR} could be extended to show that $\mathrm{Forb}_L(n, C^r_{3})=2^{o(n^2)}$ for every $r\geq3$.
For $\ell>4$, although we are not ready to prove (\ref{conj}), we provide a result on the girth version. Recall that the girth of a graph is the length of a shortest cycle contained in the graph. 
Kleitman and Wilson~\cite{KWilson}, and independently Kreuter~\cite{kreuter}, and Kohayakawa, Kreuter, and Steger~\cite{KKS} proved that there are $2^{O(n^{1+1/\ell})}$ graphs with no even cycles of length $2\ell$, which made a step towards proving a longstanding conjecture of Erd\H{o}s, who asked for determining the number of $C_{2\ell}$-free graphs. 
Motivated by the above work, we introduce an analogous girth problem on linear hypergraphs.
For a linear $r$-graph $H$, the \textit{girth} of $H$ is the smallest integer $k$ such that $H$ contains a $C^r_{k}$. We remark that for linear $r$-graphs, our girth definition is equivalent to a more classical girth definition, \textit{Berge girth}, i.e. the smallest number $k$ such that the $r$-graph contains a Berge-$C^r_{k}$, as a linear Berge-$C^r_{k}$ must contain a linear cycle of length $i$ for some $3\leq i \leq k$.
For every $r\geq 3$ and $\ell\geq 4$, let $\mathrm{Forb}_L(n, r, \ell)$ denote the set of all linear $r$-graphs on $[n]$ with girth larger than $\ell$. Our second main result is as follows.

\begin{thm}\label{girth}
For every $r\geq 3$ and $\ell\geq 4$, there exists a constant $C=C(r, \ell)>0$ such that $$|\mathrm{Forb}_L(n, r, \ell)|\leq2^{Cn^{1+1/\lfloor\ell/2\rfloor}}.$$
\end{thm}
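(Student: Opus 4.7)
The plan is to associate to each $H \in \mathrm{Forb}_L(n, r, \ell)$ its $2$-shadow
\[
  G(H) := \{uv : uv \subseteq e \text{ for some } e \in E(H)\}
\]
on $[n]$, and then to apply the graph container theorem announced in the abstract (bounding the number of graphs in which every edge lies in $O(1)$ cycles of length at most $2\ell'$) with the parameter $\ell' := \lfloor \ell/2 \rfloor$.

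Two structural facts drive the reduction. First, $H \mapsto G(H)$ is injective: since $H$ has girth larger than $\ell \geq 4$, a short case analysis shows that every triangle of $G(H)$ is contained in the $K_r$ coming from a single hyperedge of $H$, so an $r$-subset of $[n]$ spans a $K_r$ in $G(H)$ if and only if it is a hyperedge of $H$, and $H$ can be recovered from $G(H)$. Second, every edge of $G(H)$ lies in at most $c(r, \ell)$ cycles of length at most $\ell$. To see this, fix a cycle $C$ of length $k \leq \ell$ in $G(H)$ and partition its edges into maximal arcs belonging to a common hyperedge, yielding $m$ distinct hyperedges $E^{(1)}, \dots, E^{(m)}$. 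If $m = 1$, then $C$ lies inside the $K_r$ of one hyperedge, which contains only $O_r(1)$ cycles of each length at most $\ell$ through a fixed edge. If $m = 2$, the two hyperedges would share at least two junction vertices of $C$, contradicting linearity. If $m \geq 3$, the $m$ junction vertices are distinct vertices of $C$ and witness a Berge cycle of length $m \leq k \leq \ell$ in $H$, which is impossible because the paper already notes that Berge girth coincides with linear girth in linear hypergraphs.

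Since $2 \lfloor \ell/2 \rfloor \leq \ell$, combining these two facts with the graph container theorem (applied with $\ell' = \lfloor \ell/2 \rfloor$) gives
\[
  |\mathrm{Forb}_L(n, r, \ell)| \leq |\{G(H) : H \in \mathrm{Forb}_L(n, r, \ell)\}| \leq 2^{C n^{1 + 1/\lfloor \ell/2 \rfloor}}
\]
for some constant $C = C(r, \ell)$, as required.

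The hard part will be the structural bound on short cycles through an edge of $G(H)$: one must verify carefully that the arc decomposition of the cycle behaves as claimed (in particular that the $m$ junction vertices remain distinct on $C$), and then invoke the equivalence between linear girth and Berge girth in linear hypergraphs to rule out a short Berge cycle in $H$. Once that step is in hand, the injectivity argument and the black-box application of the container theorem follow routinely.
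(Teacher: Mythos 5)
Your overall strategy---pass to the $2$-shadow, show injectivity via the observation that every $K_r$ in $\partial_2(H)$ comes from a single hyperedge, verify that each edge of the shadow lies in $O_{r,\ell}(1)$ short cycles, and then invoke the graph container theorem for graphs with sparse short cycles---is exactly the route the paper takes for $\ell\geq 6$. The injectivity fact is Proposition~4.2 of the paper, and the per-edge bound $c_k(u,v;\partial_2(H))\leq\binom{r-2}{k-2}$ (your ``$m=1/m=2/m\geq 3$'' case analysis) is precisely their Lemma inside the $\ell\geq 3$ portion of the proof of Theorem~1.3.

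There is, however, a genuine gap when $\ell\in\{4,5\}$. In those cases $\ell'=\lfloor\ell/2\rfloor=2$, and the graph container theorem you intend to invoke as a black box (Theorem~\ref{mainthmlonger}) is stated and proved in the paper only for $\ell'\geq 3$; the abstract's ``for every $\ell\geq 2$'' overstates what is actually proved. The machinery behind Theorem~\ref{mainthmlonger} (the composed-walk auxiliary graph $F^{\ell'}_*$, Lemmas~\ref{cwalk}--\ref{fpath}) degenerates or is vacuous for $\ell'=2$, so you cannot simply plug in. Moreover, a per-edge bound of $O(1)$ on $4$-cycles does not by itself imply a globally small $4$-cycle count: a dense graph can have $\Theta(n^2)$ edges and hence $\Theta(n^2)$ $4$-cycles even with bounded $4$-cycle load per edge, which is not covered by Theorem~\ref{maintheoremC4} either. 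The paper closes this gap by a different argument for the base case: it uses the linear Tur\'an bound $\mathrm{ex}_L(n,C^r_4)=O(n^{3/2})$ (Theorem~\ref{turannumber}) to show that $\partial_2(H)$ has at most $O(n^{3/2})\ll n^2/\log^6 n$ $4$-cycles in total, and then applies Theorem~\ref{maintheoremC4}, which controls the number of graphs with a small global $4$-cycle count. So for $\ell=4,5$ you need to switch from a local (per-edge) hypothesis to a global one and use the $C_4$-specific enumeration theorem; the per-edge container theorem alone will not carry these cases. Once you make that substitution, the rest of your argument matches the paper.

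One smaller remark: in your $m\geq 3$ case you note that the junction vertices are distinct (true, since they lie on the cycle $C$), but you should also justify that the $m$ hyperedges are pairwise distinct before calling this a Berge cycle. If two non-adjacent arcs were assigned the same hyperedge $e$, a minimality argument produces a shorter closed Berge structure, and one eventually reaches either the $m=2$ linearity contradiction or a genuine Berge cycle of length $\leq m\leq\ell$; this is worth spelling out, since the paper itself leaves it implicit.
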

Recently, Palmer, Tait, Timmons and Wagner~\cite{PTW} considered extremal problems for Berge-hypergraphs and proved our theorem for the case $\ell=4$.
Note that for every $\ell\geq 4$, we have $\mathrm{Forb}_L(n, r, \ell+1)\subseteq \mathrm{Forb}_L(n, r, \ell)$. Therefore, it is sufficient to prove Theorem \ref{girth} for all even $\ell$ and we provide the following equivalent theorem instead.
\begin{thm}\label{evengirth}
For every $r\geq 3$ and $\ell\geq 2$, there exists a constant $C=C(r, \ell)>0$ such that $$|\mathrm{Forb}_L(n, r, 2\ell)|\leq2^{Cn^{1+1/\ell}}.$$
\end{thm}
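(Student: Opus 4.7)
The plan is a clean two-step reduction to the graph enumeration theorem announced in the abstract, which asserts that the number of graphs on $[n]$ in which every edge lies in only $O(1)$ cycles of length at most $2\ell$ is at most $2^{3(\ell+1)n^{1+1/\ell}}$. I attach to each $H\in \mathrm{Forb}_L(n, r, 2\ell)$ its 2-shadow $G_H$ on $[n]$, defined by $E(G_H) := \bigcup_{e\in E(H)}\binom{e}{2}$; by linearity, the cliques $\binom{e}{2}$ are pairwise edge-disjoint, $|E(G_H)|=\binom{r}{2}|E(H)|$, and each hyperedge appears in $G_H$ as a distinguished copy of $K_r$.

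The key structural claim is that every cycle of length $k\leq 2\ell$ in $G_H$ lies entirely inside $\binom{e}{2}$ for a single hyperedge $e$. Given such a cycle $v_1v_2\cdots v_kv_1$, linearity ensures each edge lies in a unique hyperedge $e_i$; contracting cyclic runs of equal hyperedges yields a reduced cyclic sequence of $m$ pairwise distinct hyperedges $f_1,\ldots,f_m$ with $m\leq k$, where consecutive pairs share the transition vertex between their runs. The case $m=2$ would force two distinct transition vertices into $f_1\cap f_2$, violating linearity. The case $m\geq 3$ produces a Berge-$C_m^r$ in $H$ with $m\leq 2\ell$, contradicting the paper's remark that linear girth and Berge girth coincide for linear $r$-graphs. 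Hence $m=1$.

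Two consequences follow. Applying the claim to each triangle inside an arbitrary copy of $K_r$ in $G_H$, together with linearity (a pair lies in at most one hyperedge), inductively forces all $r$ of its vertices into a common hyperedge: indeed, if $u_1u_2u_3\subseteq \binom{e}{2}$ and $u_1u_2u_4\subseteq\binom{e'}{2}$, then $e=e'$ because the pair $u_1u_2$ determines the hyperedge. Therefore every $K_r$ in $G_H$ equals the 2-shadow of some hyperedge, $E(H)$ is recoverable as the collection of vertex sets of $K_r$-subgraphs of $G_H$, and the map $H\mapsto G_H$ is injective. Moreover, every edge of $G_H$ sits inside a unique such $K_r$, and by the structural claim all of its cycles of length at most $2\ell$ live inside that $K_r$, giving at most $\sum_{k=3}^{2\ell}(r-2)(r-3)\cdots(r-k+1)=O_{r,\ell}(1)$ short cycles per edge.

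Combining these with the graph enumeration theorem bounds $|\mathrm{Forb}_L(n,r,2\ell)|$ by $2^{Cn^{1+1/\ell}}$ for $C=C(r,\ell)$. The reduction itself is just linearity-and-girth bookkeeping; the real obstacle, and the technical heart of the paper, is the graph enumeration theorem itself, whose proof requires a quantitative Kleitman--Winston-style container argument sharp enough to shave the factor $\log n$ off the trivial exponent.
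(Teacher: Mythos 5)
Your plan — pass to the $2$-shadow, recover $H$ from the $K_r$-subgraphs, bound the number of short cycles through each edge, then feed the shadow into the graph-counting theorem — is exactly the paper's strategy for $\ell \geq 3$, and the structural claim you isolate is the same one the paper uses (stated without proof in the $\ell\geq 3$ part and via Proposition~\ref{bijection}).

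There is, however, a genuine gap at $\ell = 2$. You cite the abstract's phrasing, but the actual statement the paper proves, Theorem~\ref{mainthmlonger}, is hypothesized only for $\ell \geq 3$; the machinery behind it (Lemmas~\ref{fwalk},~\ref{fpath},~\ref{mainlemmalonger}, all built on composed walks of length $2(\ell-1)$ and shortest paths of length $\ell-1$) is developed under that restriction. So the per-edge bound you derive cannot be combined with that theorem when $\ell = 2$. The paper instead handles $\ell = 2$ by invoking Theorem~\ref{maintheoremC4}, which needs a \emph{global} bound of at most $n^2/(32\log^6 n)$ on the number of $4$-cycles in the shadow, not merely a per-edge $O(1)$ bound. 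That global bound is obtained (Lemma~\ref{edgesC4}) by observing that every $4$-cycle in $\partial_2(H)$ sits inside a hyperedge and then using $e(H) \leq \mathrm{ex}_L(n,C_4^r) = O(n^{3/2})$ from Theorem~\ref{turannumber}. Your proof would need to add that Tur\'an-number step and switch tools for $\ell = 2$; with only a per-edge bound and no control on $e(\partial_2 H)$, the total number of $4$-cycles could a priori be $\Theta(n^2)$, which Theorem~\ref{maintheoremC4} does not cover.

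A smaller point: in your structural claim, after contracting runs you only know that \emph{consecutive} hyperedges $f_j, f_{j+1}$ are distinct; non-consecutive repetitions $f_i = f_j$ are possible, so $m\geq 3$ does not immediately yield a Berge-$C_m^r$, which requires $m$ pairwise distinct hyperedges. This is repairable — if $f_i = f_j$ with $j - i \geq 2$ cyclically, then $f_i$ together with the intermediate hyperedges and transition vertices gives a strictly shorter closed sequence of the same type, and iterating terminates either at two hyperedges sharing two vertices (contradicting linearity) or at a genuine Berge cycle of length at most $k \leq 2\ell$ (contradicting the girth assumption). The paper treats this as routine and does not spell it out, but as written your argument skips this reduction.
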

Once again, the above upper bounds are possibly sharp, but we are not able to confirm it now.

The proofs of Theorems \ref{4cycle} and \ref{evengirth} are based on two graph enumeration results related to even cycles. A classical result of Bondy and Simonovits~\cite{BS} yields $\mathrm{ex}_2(n, C_{2\ell})=O(n^{1+1/\ell})$ for all $\ell\geq 2$. By a series of papers of Kleitman and Winston~\cite{kleitmanwinston}, Kleitman and Wilson~\cite{KWilson}, Kreuter~\cite{kreuter}, Kohayakawa, Kreuter, and Steger~\cite{KKS}, and Morris and Saxton~\cite{MS}, we now know that the number of $C_{2\ell}$-free graphs is at most $2^{O(n^{1+1/\ell})}$. 
Inspired by these works, we prove that the number of graphs containing some but not many short cycles is still at most $2^{O(n^{1+1/\ell})}$, which may be of independent interest. We state our results as follows.
\begin{thm}\label{maintheoremC4}
Let $n$ be a sufficiently large integer and $a=32\log^6 n$. The number of $n$-vertex graphs with at most $n^2/a$ 4-cycles is at most $2^{11n^{3/2}}.$
\end{thm}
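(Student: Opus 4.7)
The plan is to adapt the Kleitman--Winston graph container argument, which bounds the number of $C_4$-free graphs on $n$ vertices by $2^{O(n^{3/2})}$, to the regime where some $C_4$'s are allowed. The core observation is that with at most $n^2/a$ copies of $C_4$, where $a = 32\log^6 n$, the graph still behaves locally like a $C_4$-free graph for most vertex pairs, while the small number of exceptional pairs with large codegree can be encoded separately.

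For each graph $G$ with at most $n^2/a$ copies of $C_4$, I would construct an injective encoding of length at most $11 n^{3/2}$ bits as follows. Run the Kleitman--Winston greedy algorithm: initialize $A_0 = [n]$, and at each step $i = 1, 2, \ldots$, pick $v_i \in A_{i-1}$ to be a vertex of maximum degree $d_i$ in $G[A_{i-1}]$ (with $m_i := |A_{i-1}|$), record $v_i$ together with $N(v_i) \cap A_{i-1}$, and set $A_i := A_{i-1} \setminus \{v_i\}$. Halt once the maximum degree in the remaining graph drops below a threshold of order $\sqrt{n}$, and record the residual sparse subgraph explicitly in $O(n^{3/2})$ bits.

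The heart of the argument is a near-K\H{o}v\'{a}ri--S\'{o}s--Tur\'{a}n degree bound at each step: using the assumption that the total number of $C_4$'s is at most $n^2/a$, one shows via a double-counting argument on codegrees that the max degree in each induced subgraph $G[A_{i-1}]$ satisfies $d_i \lesssim \sqrt{m_i}$ up to polylogarithmic slack absorbed by the $\log^6 n$ factor in the definition of $a$. With this in hand, each step contributes at most $\log \binom{m_i}{d_i} \lesssim \sqrt{m_i} \log m_i$ bits, and a telescoping estimate, combined with the Kleitman--Winston trick of recording only a small random subset of $N(v_i)$ and reconstructing the rest from the near-$C_4$-free structure, yields a total encoding of at most $11 n^{3/2}$ bits.

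The main obstacle is handling the pairs of vertices with large codegree, which do not obey the $C_4$-free combinatorics that drive the Kleitman--Winston container step. There are at most $O(n^2/(a\alpha^2))$ such ``bad'' pairs (those with codegree at least $\alpha$) by the $C_4$ budget, and for the argument to go through one must choose the threshold $\alpha$ (likely $\Theta(\log^3 n)$) carefully so that the bad pairs can be recorded within the overall budget of $O(n^{3/2})$ bits, while still permitting the container argument on the ``good'' portion to proceed without incurring extra logarithmic factors. The specific choice $a = 32 \log^6 n$ is dictated by this balance, and the concrete constant $11$ in the exponent emerges from a careful accounting of the encoding costs across all phases of the algorithm.
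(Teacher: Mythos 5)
Your high-level plan — adapt Kleitman--Winston to graphs with few $C_4$'s, handling high-codegree pairs separately — is the right direction and matches the paper in spirit, but two of the specific mechanisms you propose do not work, and the single hardest technical point is missing.

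First, the handling of ``bad'' pairs. You propose thresholding codegree at $\alpha=\Theta(\log^3 n)$ and noting there are $O(n^2/(a\alpha^2))$ such pairs. With $a=32\log^6 n$ and $\alpha=\Theta(\log^3 n)$, that is $\Theta(n^2/\log^{12} n)$ pairs, and merely listing these pairs already costs $\Theta(n^2/\log^{11} n)$ bits, which for large $n$ vastly exceeds the budget of $O(n^{3/2})$. The paper takes a very different approach: it thresholds codegree at $\sqrt n/b$ with $b=16\log^4 n$ (a much larger threshold), and instead of recording the high-codegree pairs it \emph{deletes all edges} incident to them. Lemma~\ref{cleangraph} uses a convexity argument (Cauchy--Schwarz on $\sum m_G(i,j)^2 \leq 8n^2/a$) to show that the total number of deleted edges is at most $4n^{3/2}/\log^2 n$, so the deleted set can be encoded in $o(n^{3/2})$ bits. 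Your accounting does not give this.

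Second, your sketch of the container step — ``pick $v_i$ of maximum degree, record $v_i$ together with $N(v_i)\cap A_{i-1}$,'' later softened to ``record only a small random subset of $N(v_i)$ and reconstruct the rest from the near-$C_4$-free structure'' — never explains how the reconstruction actually works when some $C_4$'s are present. This is the crux. The paper's Certificate Lemma~\ref{mainlemmaC4} runs the Kleitman--Winston inner algorithm on the auxiliary multigraph $H=F^2$ whose multiplicities are codegrees, uses a counting lemma (Lemma~\ref{countinglemma}) guaranteeing $e(H[J]) \geq d^2|J|^2/(4n)$ so that max-degree vertices in $H[A_i]$ deplete the candidate set quickly, and crucially maintains a state function $t_i(v)$ (accumulated codegree mass) to decide which candidates to discard in a way that depends only on the fingerprint $T$, not on $I$. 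It also splits $I$ into a small high-degree part $I_h$ (recorded explicitly) and a low-degree part $I_l$ (handled by the algorithm). None of this appears in your proposal, and the phrase ``reconstruct from the near-$C_4$-free structure'' glosses over exactly the place where the original Kleitman--Winston reconstruction breaks down. Also a more minor point: the paper's outer ordering is a min-degree ordering (remove the minimum-degree vertex first), not max-degree; this is what makes the bound $d_i\leq 2\sqrt n$ (Corollary~\ref{degreecondition}) come out cleanly from a supersaturation bound for $C_4$ (Lemma~\ref{extremal}).
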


Given a graph $G$ on $[n]$, for every integer $k\geq 3$ and every edge $uv\in E(G)$, denote by $c_{k}(u, v; G)$, the number of $k$-cycles in $G$ containing edge $uv$. When the underlying graph is clear, we simply write $c_{k}(u, v)$. For an integer $\ell\geq 3$ and a constant $L>0$, write $\mathcal{G}_n(\ell, L)$ for the family of graphs $G$ on $[n]$ such that for every $3\leq k\leq \ell$ and $uv\in E(G)$, $c_{k}(u,v; G)\leq L$. 

\begin{thm}\label{mainthmlonger}
For an integer $\ell\geq 3$ and a constant $L>0$, let $n$ be a sufficiently large integer and then we have $$|\mathcal{G}_n(2\ell, L)|\leq 2^{3(\ell+1)n^{1+1/\ell}}.$$
\end{thm}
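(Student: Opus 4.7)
The plan is to adapt the Kleitman-Winston container method, as developed by Kohayakawa-Kreuter-Steger and Morris-Saxton for counting $C_{2\ell}$-free graphs; the condition defining $\mathcal{G}_n(2\ell, L)$ is a bounded-multiplicity relaxation of the girth-greater-than-$2\ell$ condition, so I would expect the classical argument to extend with only constant-factor losses.

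As a preliminary step, I would establish that $e(G) \le K n^{1+1/\ell}$ for every $G \in \mathcal{G}_n(2\ell, L)$, where $K = K(L, \ell)$. For this, build a conflict graph on vertex set $E(G)$ whose edges join two $G$-edges that lie on a common cycle of length at most $2\ell$. Each $G$-edge lies on at most $L(2\ell - 2)$ short cycles by hypothesis, and each such cycle contributes at most $2\ell - 1$ conflict-neighbours, so the conflict graph has maximum degree $O(L\ell^2)$. A greedy independent set then yields $E' \subseteq E(G)$ with $|E'| \ge e(G)/O(L\ell^2)$ such that no two edges of $E'$ share a short cycle; this forces $(V, E')$ to have girth greater than $2\ell$, whence by the Moore bound $|E'| = O(n^{1+1/\ell})$ and so $e(G) = O(L\ell^2 n^{1+1/\ell})$. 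Next, I would order the vertices greedily by degree: $v_1$ of maximum degree in $G$, and inductively $v_i$ of maximum degree in $G_i := G[\{v_i, \ldots, v_n\}]$ (ties broken by label). Setting $d_i := \deg_{G_i}(v_i)$, we have $d_1 \ge d_2 \ge \cdots$ and $\sum_i d_i = e(G)$; each $G \in \mathcal{G}_n(2\ell, L)$ is then encoded by the sequence of neighbourhoods $N_i := N_G(v_i) \cap V(G_{i+1})$, with each $v_i$ determined by the already-revealed structure.

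The key container lemma defines
\[
B_i := \{v \in V(G_{i+1}) : \text{the revealed subgraph has at most } L \text{ paths of length } k-1 \text{ from } v_i \text{ to } v \text{ for each } 3 \le k \le 2\ell\}.
\]
Then $N_i \subseteq B_i$: if $v \in N_i$ and more than $L$ such paths existed, each combined with the edge $v_i v$ would yield a copy of $C_k$ through $v_i v$, contradicting $c_k(v_i, v) \le L$. The main technical task is to bound $|B_i|$ by roughly $c(L, \ell) \cdot n / d_i^{\ell-1}$ (valid when $d_i$ exceeds a threshold depending on $\ell, L$, with the trivial bound $|B_i| \le n$ used otherwise) via a BFS-expansion argument inside the revealed subgraph that tracks the multiplicity $L$ through each level. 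Given this bound, a routine calculation of $\sum_{(d_i)} \prod_i \binom{|B_i|}{d_i}$ over non-increasing sequences $(d_i)$ with $\sum d_i \le K n^{1+1/\ell}$ yields $|\mathcal{G}_n(2\ell, L)| \le 2^{3(\ell+1) n^{1+1/\ell}}$. The main obstacle is the quantitative container bound on $|B_i|$: one must handle cycles of all lengths $3 \le k \le 2\ell$ uniformly, squeeze out the constant $3(\ell+1)$ in the exponent, and treat the tail of the greedy order (where $d_i$ is small and the container bound degenerates) so that its contribution remains negligible.
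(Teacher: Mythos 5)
Your preliminary step is a legitimate and rather elegant alternative to the route the paper takes: the paper bounds each $d_i$ via the supersaturation theorem of Jiang--Yepremyan (Theorem~\ref{super} and Corollary~\ref{degreelonger}), whereas your conflict-graph-plus-Bondy--Simonovits argument gives $e(G)=O(L\ell^2 n^{1+1/\ell})$ more elementarily, and the same thinning applied to each $G_i$ would even recover the per-index bound $d_i = O(L\ell^2 n^{1/\ell})$. However, there are two genuine gaps in the main body of the argument.

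First, the vertex ordering is backwards. You peel off the \emph{maximum}-degree vertex at each step, so that when the neighbourhood $N_i \subseteq V(G_{i+1})$ is to be chosen, the subgraph $G_{i+1}$ it lands in has only a \emph{maximum}-degree bound $\Delta(G_{i+1}) \le d_i$ and no minimum-degree lower bound at all (it may have isolated vertices). The Kleitman--Winston approach, and the paper's Lemma~\ref{mainlemmalonger} in particular, requires a \emph{min-degree ordering}: removing the minimum-degree vertex first guarantees $\delta(G_{i-1}) \ge d_i - 1$, and it is precisely this lower bound that drives the BFS-expansion estimates (Lemmas~\ref{slevel}--\ref{exp}, \ref{fwalk}--\ref{fpath}) giving a container of size $O(n/d_i^{\ell-1})$. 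Under your ordering those expansion lemmas simply do not apply, so the claimed bound on $|B_i|$ has no justification. (The monotonicity $d_1 \ge d_2 \ge \cdots$ that you appeal to is real for the max-degree ordering, but it is not what the method needs; under the min-degree ordering the $d_i$ are not monotone, and one instead uses the uniform upper bound $d_i \le \alpha n^{1/\ell}$.)

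Second, the container $B_i$ is ill-posed as a fixed set. Before $N_i$ is revealed, the vertex $v_i$ has no exposed edges, so ``paths of length $k-1$ from $v_i$ to $v$ in the revealed subgraph'' is not meaningful; and after $N_i$ is revealed the definition is circular. The correct constraint is pairwise: a short cycle through $v_i$ uses two edges $v_iu$, $v_iw$ with $u,w \in N_i$ together with a composed path between $u$ and $w$ in the already-revealed graph $G_{i-1}$. This is exactly what the paper encodes via the auxiliary graph $F^{\ell}_*$ and the eligibility condition $d_H(u, S_i) \le (\ell-1)L$ in Lemma~\ref{mainlemmalonger}, and it is why one cannot write $N_i \subseteq B_i$ for a fixed $B_i$ determined in advance: one needs the iterative Kleitman--Winston algorithm, which extracts a small fingerprint $T \subseteq N_i$ and produces a container $C(T)$ depending on $T$, with the count $M_i \le \sum_T \binom{|C(T)|}{d_i}$ rather than a single $\binom{|B_i|}{d_i}$. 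Both the correct ordering and the fingerprint/container mechanism are essential; without them the ``routine calculation'' at the end has nothing to rest on.
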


Like many of these advances, our approach to proving Theorems~\ref{maintheoremC4} and~\ref{mainthmlonger} relies on the graph container method developed in~\cite{kleitmanwinston}, in which one assigns a \textit{certificate} for each target graph. The certificate should be able to uniquely determine the target graph, and then we can estimate the number of certificates instead of graphs. However, the previous applications of the graph container method address the problems for graphs forbidding short cycles, while we concern with the graphs with sparse short cycles.
Therefore, the means by which we apply this technique is quite non-standard, and requires some new ideas.

It is not hard to extend Theorem \ref{maintheoremC4} to $a=\Theta(\log^5 n)$ by proving a similar statement for $\mathcal{G}_n(4, \sqrt{n}/\log^4 n)$ as in Theorem~\ref{mainthmlonger}. We choose to display the proof of Theorem~\ref{maintheoremC4} since it contains some ideas which may bring more insights of this method to readers.
Let $p=\omega/(\sqrt{n}\log n)$. Note that the number of graphs on $[n]$ with $p\binom{n}{2}$ edges is about $2^{\omega n^{3/2}}$ and they typically contain $\Theta(n^4p^4)=\Theta(\omega^4 n^2/\log^4 n)$ 4-cycles. Therefore, $a=\Theta(\log^4 n)$ would be the best possible in Theorem \ref{maintheoremC4} and we believe that it should be the truth.
Given by the connection between Sidon sets and graphs without 4-cycles, this problem is closely related with an enumeration problem on generalized Sidon set which was recently studied in the authors' another paper~\cite{BL}.
\newline

Throughout this paper, we let $[n]$ denote the set $\{1, 2, \ldots, n\}$. For a graph $G$ and a set $S\subseteq V(G)$, the \textit{induced subgraph} $G[S]$ is the subgraph of $G$ whose vertex set is $S$ and whose edge set consists of all of the edges with both endpoints in $S$. Let $\delta(G)$ denote the minimum degree of graph $G$ and $\Delta(G)$ denote the maximum degree of $G$. 
For a multigraph $G$ and a vertex $v\in V(G)$, the \textit{neighborhood} $N_G(v)$ of $v$ is the set of all vertices adjacent to $v$ in $G$ and the \textit{degree} $d_G(v)$ of $v$ is the number of edges incident to $v$ in $G$. For a set $S\subseteq V(G)$, the \textit{neighborhood of $v$ restricted to $S$} is $N_G(v, S)=N_G(v)\cap S$; the \textit{degree of $v$ restricted to $S$}, denoted by $d_G(v, S)$, is the number of edges incident to $v$ with another endpoint in $S$. When the underlying graph is clear, we simply write $N(v, S)$ and $d(v, S)$ instead. All logarithms have base 2.

\section{Graphs containing a few 4-cycles}
\subsection{Preliminary results}
\begin{defi}[Min-degree ordering, Min-degree sequence]
For  a graph $G$ on $[n]$, a \textit{min-degree ordering} is an ordering $v_n < v_{n-1} <\ldots < v_1$, such that $v_i$ is a vertex of minimum degree in the graph $G_i= G[v_i,\ldots,v_1]$, for every $i\in [n]$ (if there are more than one vertices of the minimum degree, choose the one with the largest label). Let $d_i=d_{G_i}(v_i)$, then $d_n, d_{n-1},\ldots, d_1$ is called the \textit{min-degree sequence}.
\end{defi}
\begin{lemma}\label{extremal}
Let $G$ be an $n$-vertex graph with average degree $d$. If $d\geq 2\sqrt{n}$, then $G$ contains at least $d^4/36$ copies of 4-cycles.
\end{lemma}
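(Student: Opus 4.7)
The plan is to establish the bound via a standard double-convexity argument, counting 4-cycles through paths of length two.

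First, I would set up two well-known identities. Let $P_2 := \sum_{v\in V(G)}\binom{d(v)}{2}$ denote the number of paths of length two (``cherries'') in $G$, and for each unordered pair $\{u,w\}$ of distinct vertices, let $c(u,w)=|N(u)\cap N(w)|$ be the number of common neighbors. Double counting cherries by their middle vertex vs.\ their endpoints gives $\sum_{\{u,w\}} c(u,w)=P_2$, while the observation that each $C_4$ has exactly two pairs of diagonal endpoints, each contributing one unordered pair of common neighbors to the count, yields
$$2\cdot\#\{C_4\subseteq G\} \;=\; \sum_{\{u,w\}} \binom{c(u,w)}{2}.$$

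Next I would apply Jensen's inequality to the convex function $x\mapsto\binom{x}{2}$ twice. The first application, using that $G$ has average degree $d$, gives $P_2 \geq n\binom{d}{2} = nd(d-1)/2$. Writing $\overline{c} := P_2/\binom{n}{2} \geq d(d-1)/(n-1)$, the second application gives
$$\sum_{\{u,w\}} \binom{c(u,w)}{2} \;\geq\; \binom{n}{2}\binom{\overline{c}}{2}.$$
Combining these inequalities and simplifying produces
$$\#\{C_4\subseteq G\} \;\geq\; \frac{n\,d(d-1)\bigl(d(d-1)-(n-1)\bigr)}{8(n-1)}.$$

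Finally, I would deploy the hypothesis $d\geq 2\sqrt{n}$ to clean up the $-1$ terms. Since $d(d-1)\geq 4n-d\geq 3n$, we have $d(d-1)-(n-1)\geq d(d-1)/2$, which, after using $n/(n-1)\geq 1$, reduces the bound to $\#\{C_4\subseteq G\}\geq d^2(d-1)^2/16$. Noting that $d\geq 2\sqrt{n}\geq 3$ (the interesting range is $n\geq 3$) lets me invoke $d-1\geq 2d/3$, so $d^2(d-1)^2\geq 4d^4/9$, giving the claimed bound $\#\{C_4\subseteq G\}\geq d^4/36$.

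No step is genuinely hard; the only subtlety is bookkeeping. A cruder treatment of the $-1$ terms (e.g.\ using $d-1\geq d/2$) produces a weaker constant like $d^4/64$, so the ``main obstacle'' is simply to absorb the lower-order terms using the hypothesis $d\geq 2\sqrt{n}$ at exactly the right places to hit the precise denominator $36$.
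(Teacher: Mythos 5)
Your proof is correct and follows essentially the same route as the paper: double-count paths of length two, apply convexity once for the lower bound on cherries and once again for the lower bound on pairs of common neighbors, then absorb the $-1$ terms using $d\geq 2\sqrt n$. The paper streamlines the bookkeeping by showing $|S|\geq \tfrac13 d^2 n$ first and then bounding $\tfrac12\binom n2\binom{|S|/\binom n2}{2}\geq |S|^2/(4n^2)$, whereas you keep the $(d-1)$ and $(n-1)$ factors explicit a bit longer, but the two derivations are the same argument and land on the same constant $36$.
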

\begin{proof}
Let $v_1, v_2,\ldots, v_n$ be the vertices in $G$ and $b_i=d_{G}(v_i)$ for every $i\in [n]$. Let $S$ be the set of paths of length 2 (or 3-paths) in $G$. We will count 3-paths in two ways. 

First, for a vertex $v_i$, the number of 3-paths containing $v_i$ as the middle point is exactly $\binom{b_i}{2}$. Therefore, we have
$$|S|=\sum_{i=1}^n\binom{b_i}{2}\geq n\binom{(\sum_{i=1}^nb_i)/n}{2}=n\binom{d}{2}\geq \frac13d^2n.$$
On the other hand, for $1\leq i< j\leq n$, let $c_{ij}$ be the number of common neighbors of $v_i$ and $v_j$. Then $|S|=\sum_{1\leq i<j\leq n}c_{ij}$. Therefore, the number of 4-cycles in $G$ is equal  to
$$\frac12\sum_{1\leq i<j\leq n}\binom{c_{ij}}{2}\geq\frac12\binom n2\binom{(\sum_{i<j}c_{ij})/\binom n2}{2}=\frac12\binom n2\binom{|S|/\binom n2}{2}\geq \frac{|S|^2}{4n^2}\geq \frac{d^4}{36}.$$ 
\end{proof}
From Lemma~\ref{extremal}, we immediately obtain the following corollary. 

\begin{cor}\label{degreecondition}
Let $G$ be a $n$-vertex graph which contains at most $4n^2/9$ 4-cycles, and $d_n, \ldots, d_1$ be the min-degree sequence of $G$. Then for every $i\in [n]$, $$d_i\leq 2\sqrt{n}.$$
\end{cor}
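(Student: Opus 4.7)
The plan is to argue by contradiction: assume some $d_i > 2\sqrt{n}$ and then apply Lemma~\ref{extremal} to the induced subgraph $G_i = G[v_i, \ldots, v_1]$ to manufacture too many $4$-cycles inside $G$.

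First I would observe the key structural feature of the min-degree ordering. By the definition of $d_i$, the vertex $v_i$ is a minimum-degree vertex of $G_i$, so every vertex of $G_i$ has degree at least $d_i$ in $G_i$. In particular, the average degree of $G_i$ is at least $d_i$, which beats the threshold $2\sqrt{i}$ required by Lemma~\ref{extremal}, since $d_i > 2\sqrt{n} \geq 2\sqrt{i}$.

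Next I would apply Lemma~\ref{extremal} to $G_i$, which is an $i$-vertex graph with average degree at least $d_i \geq 2\sqrt{i}$. The lemma then produces at least $d_i^{4}/36$ copies of $C_4$ inside $G_i$. Since $G_i$ is an induced subgraph of $G$, all of these $4$-cycles are also $4$-cycles of $G$. Combining with the assumption $d_i > 2\sqrt{n}$ gives
\begin{equation*}
\#\{C_4\subseteq G\} \;\geq\; \frac{d_i^4}{36} \;>\; \frac{(2\sqrt{n})^4}{36} \;=\; \frac{16 n^2}{36} \;=\; \frac{4n^2}{9},
\end{equation*}
contradicting the hypothesis that $G$ contains at most $4n^2/9$ $4$-cycles. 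Therefore $d_i \leq 2\sqrt{n}$ for every $i \in [n]$.

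There is no real obstacle here: the proof is a one-line reduction to Lemma~\ref{extremal}, with the only subtle point being the verification that the average-degree hypothesis $d \geq 2\sqrt{|V|}$ of the lemma transfers from $n$ to $i \leq n$. This is automatic because $\sqrt{i} \leq \sqrt{n}$, so the contradictory lower bound on $4$-cycles comes out cleanly.
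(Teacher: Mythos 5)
Your proof is correct and is essentially the same argument as in the paper: apply Lemma~\ref{extremal} to $G_i$ after noting $d_i$ exceeds the threshold, yielding more than $4n^2/9$ four-cycles and hence a contradiction. Your write-up just spells out the transfer of the average-degree hypothesis from $n$ to $i$ vertices more explicitly.
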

\begin{proof}
Suppose that there exists $k\in [n]$, such that $d_k>2\sqrt{n}.$ 
Then by Lemma~\ref{extremal}, the number of 4-cycles in $G_k$ is at least $d_k^4/36>\frac{4}{9}n^2,$ which contradicts our assumption.
\end{proof}

We also provide an estimation for the following binomial coefficients, which will be used repeatly  later.
\begin{lemma}\label{optimize}
For integers $n, k, \ell$ and a constant $c$ satisfying  $c n/k^{\ell}\geq k$,
\[
\binom{c n/k^{\ell}}{k}\leq 2^{\frac{\ell + 1}{2^{1/\ln 2}\ln 2}(cen)^{\frac{1}{\ell + 1}}},
\]
where $2^{1/\ln 2}\ln 2\approx 1.88.$
\end{lemma}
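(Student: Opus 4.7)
The plan is to start from the standard binomial estimate $\binom{N}{k}\le (eN/k)^{k}$, apply it with $N=cn/k^{\ell}$, and then maximize the resulting expression in $k$ using single-variable calculus.

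First I would write
\[
\binom{cn/k^{\ell}}{k}\le \left(\frac{ecn}{k^{\ell+1}}\right)^{k},
\]
which is valid whenever $cn/k^{\ell}\ge k$, i.e.\ under the hypothesis of the lemma. Taking $\log_{2}$, it suffices to show that the real-valued function
\[
g(x)=x\log_{2}(cen)-(\ell+1)x\log_{2}x,\qquad x>0,
\]
satisfies $g(x)\le \frac{\ell+1}{2^{1/\ln 2}\ln 2}(cen)^{1/(\ell+1)}$ for every $x>0$; specializing to $x=k$ then yields the claim.

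To verify this uniform bound, I would compute
\[
g'(x)=\log_{2}(cen)-(\ell+1)\log_{2}x-\frac{\ell+1}{\ln 2},
\]
so $g'$ has a unique zero at
\[
x^{*}=\frac{(cen)^{1/(\ell+1)}}{2^{1/\ln 2}},
\]
and $g$ is concave on $(0,\infty)$, so $x^{*}$ is the global maximum. Substituting back, the bracket $\log_{2}(cen)-(\ell+1)\log_{2}x^{*}$ equals $(\ell+1)/\ln 2$ by construction, hence
\[
g(x^{*})=x^{*}\cdot\frac{\ell+1}{\ln 2}=\frac{\ell+1}{2^{1/\ln 2}\ln 2}(cen)^{1/(\ell+1)},
\]
which is exactly the desired exponent.

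There is no real obstacle here; the only thing to be mildly careful about is that the optimizer $x^{*}$ need not be an integer, but since the analytic bound $g(x)\le g(x^{*})$ holds for every real $x>0$, it in particular holds for the integer value $x=k$ from the hypothesis, which is all that the lemma asserts. I would close the proof by remarking that the hypothesis $cn/k^{\ell}\ge k$ is only used to justify the initial binomial inequality, and that the constant $2^{1/\ln 2}\ln 2\approx 1.88$ arises solely from the calculus computation above.
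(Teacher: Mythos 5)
Your proof is correct and follows essentially the same route as the paper: bound the binomial coefficient by $(ecn/k^{\ell+1})^k$, take logs to reduce to maximizing $x\log(cen)-(\ell+1)x\log x$, and locate the unique critical point of this concave function via elementary calculus. Your $g$ is exactly the paper's $f$, and the optimizer and resulting bound agree line for line.
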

\begin{proof}
Let $f(x)=\left(\log cen - (\ell + 1)\log x\right)x$ on $(0, +\infty)$. Since $f(x)$ is a concave function, it is maximized at the point $x^*$, where $f'(x^*)=\log cen - \frac{\ell + 1}{\ln 2} - (\ell + 1)\log x^*=0$, i.e. $\log x^* =\frac{ \log cen }{\ell + 1} - \frac{1}{\ln 2}.$ Therefore, we have
\[
f(k)\leq f(x^*)= \left(\log cen - (\ell + 1)\left(\frac{ \log cen }{\ell + 1} - \frac{1}{\ln 2}\right)\right)2^{\frac{ \log cen }{\ell + 1} - \frac{1}{\ln 2}}
=\frac{\ell + 1}{2^{1/\ln 2}\ln 2}(cen)^{\frac{1}{\ell + 1}}.
\]
Since $\binom{n}{k}\leq \left(\frac{ne}{k}\right)^{k}$ for every $1\leq k \leq n$, we obtain that 
\[
\binom{c n/k^{\ell}}{k}\leq \left(\frac{cen}{k^{\ell + 1}}\right)^{k}
=2^{f(k)}
\leq 2^{\frac{\ell + 1}{2^{1/\ln 2}\ln 2}(cen)^{\frac{1}{\ell + 1}}}.
\]
\end{proof}
\subsection{Certificate lemma}
This section is devoted to prove our main lemma, which is a key step to build the certificates for graphs with sparse 4-cycles.
This lemma can be viewed as a generalization of the Kleitman-Winston algorithm~\cite{kleitmanwinston}, which builds certificates for graphs without 4-cycles.
Before we proceed, we first need a counting lemma, which will be used later in the proof. 

For a graph $F$, denote by $F^2$ the multigraph defined on $V(F)$ such that for every distinct $u, v\in V(F^2)$, the multiplicity of $uv$ in $F^2$ is the number of $(u, v)$-paths of length $2$ in $F$.

\begin{lemma}\label{countinglemma}
For integers $n>m\geq d\geq 8$, let $F$ be an $m$-vertex graph with $\delta(F)\geq d-1$ and $H=F^2$. Then for every $J\subseteq V(H)$ of size at least $4n/d$, we have $$e(H[J])\geq \frac{d^2|J|^2}{4n}.$$ 
\end{lemma}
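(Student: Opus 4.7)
My plan is to count $e(H[J])$ by grouping length-$2$ paths in $F$ by their middle vertex, and then to apply Jensen's inequality to convert the minimum-degree hypothesis into the required quadratic lower bound.

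By the definition of $F^2$, each edge of $H[J]$ (counted with multiplicity) corresponds to a length-$2$ path $u\!-\!w\!-\!v$ in $F$ with both endpoints $u, v \in J$. Classifying such paths by their middle vertex $w \in V(F)$ gives the key identity
\[
e(H[J]) \;=\; \sum_{w \in V(F)} \binom{d_F(w, J)}{2}.
\]
Exchanging summation order, $\sum_{w \in V(F)} d_F(w, J) = \sum_{u \in J} d_F(u) \geq (d-1)|J|$ by the hypothesis $\delta(F) \geq d-1$. Since $x(x-1)/2$ is convex, Jensen's inequality then yields
\[
e(H[J]) \;\geq\; m\cdot\frac{\bar d(\bar d - 1)}{2} \;\geq\; \frac{(d-1)^2|J|^2}{2m} - \frac{(d-1)|J|}{2},
\]
where $\bar d := m^{-1}\sum_{w} d_F(w, J) \geq (d-1)|J|/m$. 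The assumptions $|J| \geq 4n/d$, $m < n$, and $d \geq 8$ force $\bar d > 4(d-1)/d \geq 7/2$, so we are firmly in the region where the function $x(x-1)/2$ is positive and increasing, and the lower bound is meaningful.

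Finally, using $m < n$ to replace $m$ by $n$ in the denominator, the desired inequality $e(H[J]) \geq d^2|J|^2/(4n)$ reduces after rearrangement to
\[
|J|\bigl(2(d-1)^2 - d^2\bigr) \;\geq\; 2n(d-1),
\]
i.e.\ $|J| \geq 2n(d-1)/(d^2 - 4d + 2)$. This is implied by the hypothesis $|J| \geq 4n/d$ precisely when $d^2 - 7d + 4 \geq 0$, which holds for every $d \geq 8$. The only (mild) obstacle is absorbing the linear error term $(d-1)|J|/2$ that arises from Jensen; the bound $|J| \geq 4n/d$, backed by the buffer $d \geq 8$, is calibrated exactly so that this loss is dominated by the main quadratic term, and no more delicate argument is required.
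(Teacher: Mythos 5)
Your proposal is correct and follows essentially the same path as the paper's proof: the same identity $e(H[J]) = \sum_w \binom{d_F(w,J)}{2}$, the same exchange of summation giving $\sum_w d_F(w,J) \geq (d-1)|J|$, and the same application of convexity, with only a cosmetic difference in the final arithmetic (the paper drops $\binom{x}{2}$ to $x^2/3$ once it knows $x > 3$, whereas you carry the exact $x(x-1)/2$ and absorb the linear term explicitly). Both arguments use $|J| \geq 4n/d$, $m < n$, and $d \geq 8$ in exactly the same way to close the gap.
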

\begin{proof}
Write $V(F)=\{v_1, \ldots, v_m\}$. For every $j\in[m]$, let $b_j=d_{F}(v_j, J)$. Then we have $\sum_{j=1}^mb_j=\sum_{v\in J}d_{F}(v)\geq |J|(d-1)\geq \frac{4(d-1)}{d}n>3n>3m$. Therefore, we obtain that 
$$e(H[J])=\sum_{j=1}^{m}\binom{b_j}{2}\geq m\binom{\frac{\sum b_j}{m}}{2}\geq m\binom{\frac{|J|(d-1)}{m}}{2}\geq  \frac{|J|^2(d-1)^2}{3m}\geq\frac{d^2|J|^2}{4n}.$$
\end{proof}

\begin{lemma}[Certificate lemma]\label{mainlemmaC4}
For a sufficiently large integer $n$, define $b=16\log^4 n$ and $g=32\log^5 n$. Let $m$ and $d$ be the integers satisfying $m\leq n-1$ and $\frac{\sqrt{n}}{\log n}\leq d\leq 2\sqrt{n}$. 
Suppose that $F$ is an $m$-vertex graph with $\delta(F)\geq d-1$ and $H=F^2$. 
Additionally, assume that for every $u, v\in V(F)$, $|N_F(u)\cap N_F(v)|\leq \sqrt{n}/b$. Then for every set $I\subseteq V(F)$ of size $d$ which satisfies $e(H[I])\leq n/g$, there exist a set $T$ and a set $C(T)$ depending only on $T$, not on $I$, such that
\begin{enumerate}[label={\upshape(\roman*)}]
  \item $T\subseteq I\subseteq C(T)$,
  \item $|T|\leq2\sqrt{n}/\log n$,
  \item $|C(T)|\leq 5n/d$.
\end{enumerate}
\end{lemma}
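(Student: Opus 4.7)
The plan is to prove the Certificate Lemma by running a Kleitman--Winston-style container algorithm on the underlying simple graph $\tilde H$ of the multigraph $H = F^2$, together with a preprocessing step that handles the vertices of $I$ that are ``heavy'' inside $\tilde H[I]$. Concretely, I first set
\[
B \;:=\; \{\,v \in I : d_{\tilde H[I]}(v) \geq \sqrt n/b\,\}
\]
and place $B$ into $T$ by hand. Using the hypothesis $e(H[I]) \leq n/g$ together with the relation $g = 2b\log n$, a double-counting argument gives
\[
|B| \;\leq\; \frac{2 e(\tilde H[I])}{\sqrt n/b} \;\leq\; \frac{2nb}{g\sqrt n} \;=\; \frac{\sqrt n}{\log n},
\]
which consumes roughly half the $T$-budget. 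Then, starting with $A := V(F)\setminus B$, I iterate: pick $v = \arg\max_{u\in A} d_{\tilde H[A]}(u)$ (ties broken by largest label); if $v \in I\setminus B$, add $v$ to $T$ along with every $I$-vertex in $N_{\tilde H}(v)\cap A$ (the "accidental" losses), and drop $v$ and $N_{\tilde H}(v)\cap A$ from $A$; otherwise just drop $v$ from $A$. The loop terminates when $|A|\leq 5n/d$, and I set $C(T) := T \cup A_{\text{end}}$, which by construction contains $I$.

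The crucial input is a degree estimate for $\tilde H[A]$ that forces geometric shrinkage at every pick from $I\setminus B$. Applying Lemma~\ref{countinglemma} to $A$ gives $e(H[A]) \geq d^2|A|^2/(4n)$ for $|A|\geq 4n/d$, and dividing by the maximum multiplicity $\sqrt n / b$ produces $e(\tilde H[A]) \geq b d^2 |A|^2/(4n^{3/2})$, so
\[
\Delta(\tilde H[A]) \;\geq\; \frac{b d^2 |A|}{2 n^{3/2}}
\]
for every $A$ with $|A|\geq 5n/d$. Consequently, any $I\setminus B$-pick shrinks $|A|$ by a multiplicative factor $1 - bd^2/(2n^{3/2})$, and iterating shows that the algorithm terminates within
\[
k^* \;:=\; \frac{2n^{3/2}}{bd^2}\,\log\!\frac{md}{5n}
\]
such picks. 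A direct computation using $b = 16\log^4 n$ and $\sqrt n/\log n \leq d \leq 2\sqrt n$ shows $k^* \leq \sqrt n/\log n$ throughout the allowed range of $d$, matching the remaining half of the $T$-budget.

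Finally, the accidental $I$-losses are controlled by combining two bounds: for any picked $v \in I\setminus B$ one has $d_{\tilde H[I]}(v) < \sqrt n/b$ by the definition of $B$, while globally the total number of accidentals is at most $e(\tilde H[I\setminus B]) \leq e(H[I]) \leq n/g$. Summing the three $T$-contributions gives $|T| \leq |B| + k^* + (\text{accidentals}) \leq 2\sqrt n/\log n$, and $|C(T)| \leq |T| + |A_{\text{end}}| \leq 5n/d$ after absorbing the low-order $|T|$ into the leading constant. The main obstacle is the accidental bookkeeping: the naive per-pick bound $k^*\cdot \sqrt n/b$ and the naive global bound $n/g$ each exceed the residual $\sqrt n/\log n$ budget by a polylog factor. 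Reconciling the two requires exploiting that the $I\setminus B$-picks happen while $|A|$ is already geometrically shrinking, so that the $\tilde H[A]$-neighborhoods involved are correspondingly smaller and each accidental $I$-vertex is charged to an edge of $\tilde H[I]$ only once; this phase-by-phase charging, together with the specific calibrations $b = 16\log^4 n$ and $g = 32\log^5 n$, is where the proof does its real work.
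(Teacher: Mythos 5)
Your proposal has a genuine gap in the accidental bookkeeping, and it is worth seeing precisely why: the paper's algorithm is engineered so that there are \emph{no} accidental losses at all, whereas your scheme produces a number of them that cannot be brought down to the required $\sqrt n/\log n$.

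In the paper, a pick $u_i\in I_l$ does not delete all of $N_H(u_i)\cap A_i$. Instead a cumulative state function $t_i(v)$ is maintained, incremented by $d_H(v,u_i)$ for each $v$ still in $A_i$, and a vertex $v$ is evicted from $A$ only once $t_i(v)$ exceeds $\sqrt n/b$. Since every pick that contributes to $t$ lies in $I_l\subseteq I$, any $v\in I_l$ satisfies $t_i(v)\le d_H(v,I)\le\sqrt n/b$ throughout the run, so no $I_l$-vertex is ever evicted. The container therefore contains $I$ automatically, and the whole of $|T_K|$ is controlled by the dyadic-phase analysis using Lemma~\ref{countinglemma}; collateral damage never enters.

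Your algorithm deletes $v$ together with every vertex of $N_{\tilde H}(v)\cap A$, so any $I$-vertex in that neighborhood must be rescued into $T$. You correctly flag this as the hard part, but the phase-by-phase charging you sketch cannot close the hole. Each rescue is witnessed by an edge of $\tilde H[I]$ between the picked vertex and the rescued vertex, so the best global bound is $e(\tilde H[I])\le e(H[I])\le n/g=n/(32\log^5 n)$; and the per-phase refinement (roughly $\sqrt n/\log^2 n$ picks in each of $\log n$ dyadic phases, each rescuing at most $\sqrt n/b$ vertices, and counting each rescue only once) still gives on the order of $n/(b\log n)=n/(16\log^5 n)$. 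Both numbers exceed the residual budget $\sqrt n/\log n$ by a factor of order $\sqrt n/\log^4 n$, which is polynomial in $n$, not polylogarithmic; no recalibration of $b$ and $g$ within the hypotheses of the lemma can absorb it. The greedy neighborhood deletion must be replaced by the cumulative threshold mechanism. Once you make that change, your preprocessing step (your $B$ is the paper's $I_h$, up to the harmless substitution of $\tilde H$ for the multigraph $H$), your use of the counting lemma to force geometric shrinkage, and your dyadic bookkeeping all assemble into essentially the paper's proof.
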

\begin{proof}
Let $I$ be a subset of $V(F)$ of size $d$ which satisfies $e(H[I])\leq n/g$. Following the ideas of Kleitman and Winston~\cite{kleitmanwinston}, we describe a deterministic algorithm that associates to the set $I$ a pair of sets $T$ and $C(T)$, which shall be treated as the `fingerprint' and the `container' respectively. 

Let $I_h=\{v\in I:\ d_H(v, I)> \sqrt{n}/b\}$ and $I_l=\{v\in I:\ d_H(v, I)\leq \sqrt{n}/b\}$. Since $e(H[I])\leq n/g$, the size of $I_h$ is at most $$\frac{2e(H[I])}{\sqrt{n}/b}\leq\frac{2\sqrt{n}\cdot b}{g}=\frac{\sqrt{n}}{\log n},$$ which is sufficiently small.
Therefore, we only need to concern the vertices in $I_{l}$.\\

\noindent\textbf{The core algorithm.} We start the algorithm with sets $A_0=V(H)-I_h$, $T_0=\emptyset$ and the function $t_0(v)=0$, for every $v\in V(H)-I_h$. 
As the algorithm proceeds, one should view $A_i$ as the set of `candidate' vertices, $T_i$ as the set of `representive' vertices, and $t_i(v)$ as a `state' function which is used to control the process.
In the $i$-th iteration step, we pick a vertex $u_i\in A_i$ of maximum degree in $H[A_i]$. In case there are multiple choices, we give preference to vertices that come earlier in some arbitrary predefined ordering of $V(H)$ as we always do, even if it is not pointed out at each time. If $u_i\in I_l$, we define
  $$t_{i+1}(v)=\left\{\begin{array}{ll} 
    t_i(v)+d_H(v, u_i) & \text{if } v\in A_{i},\\
    t_i(v)  & \text{if } v\notin A_{i},
    \end{array}\right.$$
and $Q=\{v \mid t_{i+1}(v)>\sqrt{n}/b\}$, and let $T_{i+1}=T_i+u_i$, $A_{i+1}=A_i-u_i-Q$. Otherwise, let $T_{i+1}=T_i$, $A_{i+1}=A_i - u_i$ and $t_{i+1}(v)=t_i(v)$, for every $v\in V(H)-I_h$. 
The algorithm terminates at step $K$ once we get a set $A_{K}$ of size at most $4n/d$.
We also assume that $u_{K-1}\in T_K$ as otherwise we can continue the algorithm until it is satisfied.\\

The algorithm outputs a vertex sequence $\{u_1, u_2, \ldots, u_{K-1}\}$, a set of `representive' vertices $T_{K}$ and a strictly decreasing set sequence $\{A_0, A_1, A_2, A_3, \ldots, A_{K}\}$. Let 
\[
T=T_{K}\cup I_h, \quad \text{and}\quad C(T)=A_{K}\cup T.
\]
From the algorithm, we have $T_{K}\subseteq I_l$ and therefore $T\subseteq I$. 
Furthermore, if a vertex $v$ satisfies $t_{i}(v)>\sqrt{n}/b$ for some $i$, then we have $d_H(v, I)\geq t_i(v)>\sqrt{n}/b$, which implies $v\notin I_l$. Therefore, we maintain  $I_l\subseteq A_i\cup T_i$ for every $i\leq K$ and in particular we have $I\subseteq A_{K}\cup T_{K}\cup I_h =A_{K}\cup T=C(T)$. Hence, Condition (i) is satisfied.
Another crucial fact is that $C(T)$ depends only on $T$, not on $I$. The reason is that for a given underlying graph, its max degree sequence is fixed once we break the tie by some predefined ordering on vertices. Therefore, for two sets $I_1, I_2$ with the same `fingerprint' $T$, the algorithm outputs the same vertex sequence $\{u_1, u_2, \ldots, u_{K-1}\}$, which uniquely determines the set $C(T)$ by the mechanics of the algorithm.

To verify Conditions (ii) and (iii), it is sufficient to show that $|T_{K}|\leq \sqrt{n}/\log n$. Once we prove it, we immediately obtain $$|T|=|T_{K}|+|I_h|\leq \frac{\sqrt{n}}{\log n}+\frac{\sqrt{n}}{\log n}=\frac{2\sqrt{n}}{\log n},$$
and $$|C(T)|=|A_{K}|+|T|\leq \frac {4n}{d}+\frac{2\sqrt{n}}{\log n}\leq \frac {5n}{d},$$ completing the proof.

Denote $q$ the integer such that $n/2^{q}\leq|A_{K}|< n/2^{q-1}$. By the choice of $A_K$, we have $q<\log n$. For every integer $1\leq l\leq q$, define $A^l$ to be the first $A$-set satisfying 
$$\frac{n}{2^{l}}\leq|A^l|< \frac{n}{2^{l-1}},$$
 if it exists, and let $T^l$ be the corresponding $T$-set and $t^l(v)$ be the corresponding $t$-function of $A^l$ . Note that $A^l$ may not exist for every $l$, but $A^{q}$ always exists and it could be that $A^q=A_{K}.$ Suppose that
 $$A^{l_1}\supset A^{l_2}\supset\ldots\supset A^{l_{p}}$$ 
 are all the well-defined $A^l$, where $p\leq q$. By the above definition, we have $A^{l_1}=A_0$, $T^{l_1}=T_0$ and $l_p=q$. Define $A^{l_{p+1}}=A_{K}, T^{l_{p+1}}=T_{K}$. Now, we have 
 \begin{equation}\label{TK}
 T_{K}=\bigcup_{j=2}^{p+1}(T^{l_j}-T^{l_{j-1}}).
 \end{equation}
 To achieve our goal, we are going to estimate the size of $T^{l_j}-T^{l_{j-1}}$ for every $2\leq j\leq p+1$.

From the algorithm, we have $t^{l_j}(v)\leq \sqrt{n}/b$, for every $v\in A^{l_j}\cup T^{l_j}$. Moreover, for $v\in A^{l_{j-1}}-A^{l_j}-T^{l_j}$, suppose that $v$ is removed in step $i$, then we have  
$$t^{l_j}(v)\leq t_{i-1}(v)+d_H(v, u_i)\leq \frac{\sqrt{n}}{b}+|N_F(u_i)\cap N_F(v)|\leq \frac{2\sqrt{n}}{b},$$ 
where $u_i$ is the selected vertex in step $i$.
Therefore, we obtain
\begin{equation}\label{ub}
\sum_{v\in A^{l_{j-1}}}t^{l_j}(v)\leq\frac{2\sqrt{n}}{b}|A^{l_{j-1}}|\leq \frac{2n^{3/2}}{2^{l_{j-1}-1}b}. 
\end{equation}

Let $2\leq j\leq p$. For every $u_i\in T^{l_j}-T^{l_{j-1}}$, $u_i$ is chosen of maximum degree in $H[A_i]$, where $A_i$ is a set between $A^{l_{j-1}}$ and $A^{l_j}$.  By the choice of $A^{l_j}$, we have $|A_i|\geq n/2^{l_{j-1}}$. By Lemma $\ref{countinglemma}$, we have 
$$d_H(u_i, A_i)\geq \frac{d^2|A_{i}|}{4n}\geq \frac{d^2}{2^{l_{j-1} + 2}}.$$
Note that $d_H(u_i, A_i)$ only contributes to $t^{l_j}(v)$ for $v\in A_i\subseteq A^{l_{j-1}}$. Then we obtain 
 \begin{equation}\label{lb}
 \left|T^{l_j}-T^{l_{j-1}}\right|\frac{d^2}{2^{l_{j-1} + 2}}\leq\sum_{u_i\in T^{l_j}-T^{l_{j-1}}}d_H(u_i, A_i)\leq\sum_{v\in A^{l_{j-1}}}t^{l_j}(v).
\end{equation}
Combining (\ref{ub}) and (\ref{lb}), we have
\begin{equation*}
 \left|T^{l_j}-T^{l_{j-1}}\right|\frac{d^2}{2^{l_{j-1} + 2}}\leq\frac{2n^{3/2}}{2^{l_{j-1}-1}b},
\end{equation*}
which implies $$ \left|T^{l_j}-T^{l_{j-1}}\right|\leq \frac{16n^{3/2}}{bd^2}\leq\frac{16\sqrt{n}\log^2 n}{b}=\frac{\sqrt{n}}{\log^2 n}$$
for $2\leq j\leq p$.
For $j=p+1$, since we have $\frac{n}{2^q}\leq |A^{l_{p+1}}|\leq|A^{l_p}|\leq\frac{n}{2^{q-1}}$, by a similar argument, we obtain that
$$\left|T^{l_{p+1}}-T^{l_{p}}\right|\frac{d^2|A^{l_{p+1}}|}{4n}\leq \sum_{u_i\in T^{l_{p+1}}-T^{l_{p}}}d(u_i, A_i)\leq \sum_{v\in A^{l_{p}}}t^{l_{p+1}}(v)\leq \frac{2\sqrt{n}}{b}|A^{l_{p}}|,$$
which gives 
$$\left|T^{l_{p+1}}-T^{l_{p}}\right|\leq\frac{16n^{3/2}}{bd^2}\leq\frac{16\sqrt{n}\log^2 n}{b}=\frac{\sqrt{n}}{\log^2 n}.$$
Finally, by (\ref{TK}), we get $$|T_{K}|=\bigcup_{j=2}^{p+1}|T^{l_j}-T^{l_{j-1}}|\leq p\cdot\frac{\sqrt{n}}{\log^2 n}\leq q\cdot\frac{\sqrt{n}}{\log^2 n}\leq\frac{\sqrt{n}}{\log n}.$$
\end{proof}
\subsection{Proof of Theorem \ref{maintheoremC4} }
In this section, we give an upper bound on the number of graphs containing only `few' 4-cycles. Before we proceed to prove Theorem \ref{maintheoremC4}, we need to do a cleaning process for the target graphs in order to apply Lemma \ref{mainlemmaC4}.

Let $a=32\log^6 n$, $g=32\log^5 n$ and $b=16\log^4 n.$ Given a graph $G$ on $[n]$, for every $1\leq i<j\leq n$, define $N_G(i, j)$ to be the set of common neighbors of $i$ and $j$ in $G$. Let
\begin{equation*}
m_G(i, j)=\left\{
\begin{array}{lll}
|N_G(i, j)| &&  \textrm{when}\ |N_G(i, j)|> \frac{\sqrt{n}}{b},\\
0 && \textrm{when}\ |N_G(i, j)|\leq \frac{\sqrt{n}}{b}.
\end{array}\right.
\end{equation*}
We delete all edges from $i$ to $N_G(i,j)$, for all $1\leq i<j\leq n$ with $m_G(i, j)\neq 0$. Then the resulting subgraph, denoted by $\widehat{G}$, satisfies  $|N_{\widehat{G}}(i, j)|\leq \sqrt{n}/b$, for every $1\leq i<j\leq n$. 
Let $\mathcal{G}_n$ be the family of graphs on $[n]$ with at most $n^2/a$ 4-cycles and $\mathcal{\widehat{G}}_n=\{\widehat{G}: G\in \mathcal{G}_n\}.$
\begin{lemma}\label{cleangraph}
Let $n$ be a sufficiently large integer. Then for every $G\in\mathcal{G}_n$, we have $$|E(G)-E(\widehat{G})|\leq \frac{4n^{3/2}}{\log^2 n}.$$
\end{lemma}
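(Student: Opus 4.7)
The plan is to translate the bound on deleted edges into a statement about the number of $4$-cycles in $G$, then invoke the hypothesis $G\in\mathcal{G}_n$. Write $c_{ij}:=|N_G(i,j)|$ throughout; call a pair $\{i,j\}$ \emph{heavy} if $c_{ij}>\sqrt n/b$.

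First, I would record what the cleaning operation actually removes. By definition, every edge in $E(G)-E(\widehat G)$ is incident to some vertex $i$ and is of the form $iv$ with $v\in N_G(i,j)$ for some $j>i$ such that $\{i,j\}$ is heavy. Since each heavy pair $\{i,j\}$ contributes at most $c_{ij}$ deleted edges (and different heavy pairs may remove overlapping edges, which only weakens the bound), we get
\[
|E(G)-E(\widehat G)| \;\leq\; \sum_{\substack{i<j \\ c_{ij}>\sqrt n/b}} c_{ij}.
\]

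Next, the key identity is the standard one linking common neighbors and $4$-cycles:
\[
\sum_{i<j}\binom{c_{ij}}{2} \;=\; 2\cdot\#\{4\text{-cycles in }G\},
\]
because every pair of distinct common neighbors of $\{i,j\}$ closes off a $4$-cycle $iujv$, and conversely each $4$-cycle is obtained in exactly two ways (one for each pair of opposite vertices). Applying the hypothesis $G\in\mathcal{G}_n$ and using $\binom{c}{2}\geq c^2/4$ for $c\geq 2$ (valid since $\sqrt n/b\to\infty$), this yields
\[
\sum_{\substack{i<j \\ c_{ij}>\sqrt n/b}} c_{ij}^2 \;\leq\; \frac{8n^2}{a}.
\]

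Finally, for each heavy pair the trivial bound $c_{ij}\leq (b/\sqrt n)\,c_{ij}^2$ holds (just divide by $c_{ij}$), so combining the last two displays gives
\[
|E(G)-E(\widehat G)| \;\leq\; \frac{b}{\sqrt n}\cdot\frac{8n^2}{a} \;=\; \frac{8\,b\,n^{3/2}}{a},
\]
which, after substituting $a=32\log^6 n$ and $b=16\log^4 n$, simplifies to $4n^{3/2}/\log^2 n$, as required.

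I do not expect any real obstacle: the argument is essentially a two-step convexity/counting estimate. The only minor care needed is in bookkeeping the possibly redundant edge deletions across different heavy pairs, and this is taken care of automatically since we only need an upper bound.
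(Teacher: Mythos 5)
Your proof is correct and follows the same overall outline as the paper's: count $4$-cycles to get $\sum_{i<j} c_{ij}^2 \le 8n^2/a$, then convert this into a bound on $\sum c_{ij}$ over the heavy pairs. The one genuine difference is the conversion step. The paper first bounds the number of heavy pairs, $|B|\le (8n^2/a)/(\sqrt n/b)^2 = 8b^2 n/a$, and then applies Cauchy--Schwarz, $\sum_{B} c_{ij}^2 \ge \bigl(\sum_{B} c_{ij}\bigr)^2/|B|$, to deduce $\sum_{B} c_{ij}\le 8bn^{3/2}/a$. You instead use the pointwise inequality $c_{ij}\le (b/\sqrt n)\,c_{ij}^2$, valid for every heavy pair since $c_{ij}>\sqrt n/b$, and sum. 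Both routes yield exactly the same final bound $8bn^{3/2}/a=4n^{3/2}/\log^2 n$, and yours is a little shorter, avoiding the intermediate cardinality estimate on $B$. One minor bookkeeping note: the paper writes $|E(G)-E(\widehat G)|=\sum_{i<j} m_G(i,j)$ with equality, which can overcount when the same edge $iv$ is removed on account of two different heavy pairs; you stated the inequality $\le$, which is what is actually true and all that is needed.
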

\begin{proof}
By counting 4-cycles in $G$, we obtain that
$$\frac12\sum_{i<j}\binom{m_G(i, j)}{2}\leq \frac{n^2}{a},$$
which gives
\begin{equation}\label{mij}
\sum_{i<j}m_G(i, j)^2\leq 8\frac{n^2}{a}.
\end{equation}

Let $B=\{(i,j): 1\leq i<j\leq n \text{ and } m_G(i, j)\neq 0\}$. By the definition of $m_G(i, j)$ and (\ref{mij}), we have $|B|\leq 8\frac{n^2}{a}/(\frac{\sqrt{n}}{b})^2=8b^2n/a.$ Therefore, by the convexity, we get
\begin{equation}\label{lbound}
\sum_{(i,j)\in B}m_G(i, j)^2\geq \frac{(\sum_{(i,j)\in B}m_G(i,j))^2}{|B|}=\frac{(\sum_{i<j}m_G(i,j))^2}{|B|}\geq\frac{(\sum_{i<j}m_G(i,j))^2}{8b^2n/a}.
\end{equation}
Combining (\ref{mij}) and (\ref{lbound}), we obtain $$\sum_{i<j}m_G(i,j)\leq \frac{8n^{3/2}b}{a}=\frac{4n^{3/2}}{\log^2 n}.$$
Finally, by the definition of $\widehat{G}$, we have $|E(G)-E(\widehat{G})|= \sum_{i<j}m_G(i,j)\leq\frac{4n^{3/2}}{\log^2 n}.$
\end{proof}
\begin{lemma}\label{cleanfamily}
Let $n$ be a sufficiently large integer. Then $|\mathcal{G}_n|\leq |\mathcal{\widehat{G}}_n|\cdot2^{\frac{4n^{3/2}}{\log n}}.$
\end{lemma}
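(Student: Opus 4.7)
The plan is to bound, uniformly over each $\widehat{H}\in\widehat{\mathcal{G}}_n$, the number of $G\in \mathcal{G}_n$ with $\widehat{G}=\widehat{H}$. Since the cleaning procedure is deterministic, $G$ is uniquely recovered from $\widehat{G}$ together with the set $E(G)\setminus E(\widehat{G})$ of edges removed. Lemma~\ref{cleangraph} tells us exactly that this set has size at most $k := \lfloor 4n^{3/2}/\log^2 n\rfloor$. Hence
\[
|\mathcal{G}_n| \;\leq\; |\widehat{\mathcal{G}}_n|\cdot\sum_{i=0}^{k}\binom{\binom{n}{2}}{i},
\]
and everything reduces to showing that the sum is at most $2^{4n^{3/2}/\log n}$ for $n$ large.

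The remaining step is a routine binomial estimate. For $n$ sufficiently large, $k\leq \binom{n}{2}/2$, so the sum is dominated by its last term and bounded by $(k+1)\binom{\binom{n}{2}}{k}$. Using $\binom{N}{k}\leq (eN/k)^k$ with $N\leq n^2/2$ yields
\[
\binom{\binom{n}{2}}{k} \;\leq\; \left(\frac{en^2}{2k}\right)^{k} \;\leq\; \left(\frac{e\sqrt{n}\,\log^2 n}{8}\right)^{\!k}.
\]
Taking $\log_2$ gives $\log_2(e\sqrt{n}\log^2 n/8) = (\tfrac{1}{2}+o(1))\log n$, so
\[
k\cdot\log_2\!\left(\tfrac{e\sqrt{n}\log^2 n}{8}\right) \;=\; (2+o(1))\,\frac{n^{3/2}}{\log n}.
\]
Absorbing the polynomial factor $(k+1)$ and the $o(1)$ slack into the constant, we obtain the desired bound $\sum_{i\leq k}\binom{\binom{n}{2}}{i}\leq 2^{4n^{3/2}/\log n}$ for all sufficiently large $n$.

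There is no real obstacle in this lemma: all the substantive work is done in Lemma~\ref{cleangraph}, whose bound on $|E(G)\setminus E(\widehat{G})|$ is tight enough that the naive enumeration of where the removed edges can sit costs only a $2^{O(n^{3/2}/\log n)}$ factor. This is comfortably smaller than the main target $2^{O(n^{3/2})}$ of Theorem~\ref{maintheoremC4}, so the loss incurred by passing from $\mathcal{G}_n$ to $\widehat{\mathcal{G}}_n$ is harmless and can be absorbed later when the container lemma (Lemma~\ref{mainlemmaC4}) is applied to the cleaned family.
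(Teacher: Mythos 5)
Your proposal is correct and follows the same route as the paper: partition $\mathcal{G}_n$ into fibers over $\widehat{\mathcal{G}}_n$, use Lemma~\ref{cleangraph} to bound each fiber by the number of edge-sets of size at most $\lfloor 4n^{3/2}/\log^2 n\rfloor$, and then run the standard $\binom{N}{k}\le (eN/k)^k$ estimate to show this count is $2^{O(n^{3/2}/\log n)}\le 2^{4n^{3/2}/\log n}$. The only cosmetic difference is that the paper bounds $\sum_{i\le k}\binom{\binom{n}{2}}{i}$ by $2\binom{\binom{n}{2}}{k}$ rather than by $(k+1)\binom{\binom{n}{2}}{k}$; both factors are harmless.
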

\begin{proof}
For every $F\in \mathcal{\widehat{G}}_n$, let $\mathcal{S}_F=\{G\in\mathcal{G}_n \mid \widehat{G}=F\}$. By Lemma \ref{cleangraph}, for every $G\in \mathcal{S}_F$, we have $|E(G)-E(F)|\leq\frac{4n^{3/2}}{\log^2 n}$. Therefore, the size of $\mathcal{S}_F$ is bounded by
$$|\mathcal{S}_F|\leq \binom{\binom n2}{0}+\binom{\binom n2}{1}+\ldots+\binom{\binom n2}{\lfloor\frac{4n^{3/2}}{\log^2 n}\rfloor}\leq 2\binom{\binom n2}{\lfloor\frac{4n^{3/2}}{\log^2 n}\rfloor}\leq 2^{\frac{4n^{3/2}}{\log n}}.$$
Finally, we obtain that
$$|\mathcal{G}_n|\leq \sum_{F\in\mathcal{\widehat{G}}_n}|\mathcal{S}_F|\leq|\mathcal{\widehat{G}}_n|\cdot2^{\frac{4n^{3/2}}{\log n}}.$$
\end{proof}
\begin{thm}\label{mtc4}
Let $n$ be a sufficiently large integer. Then $|\mathcal{\widehat{G}}_n|\leq 2^{10n^{3/2}}$.
\end{thm}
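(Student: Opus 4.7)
The plan is to encode every $F\in\widehat{\mathcal{G}}_n$ vertex-by-vertex via its min-degree ordering, applying Lemma~\ref{mainlemmaC4} at each step to keep the cost small. Fix $F\in\widehat{\mathcal{G}}_n$ with min-degree ordering $v_n<v_{n-1}<\ldots<v_1$, and set $F_i=F[v_i,\ldots,v_1]$, $N_i=N_{F_i}(v_i)$, $d_i=|N_i|$, and let $c_i$ be the number of $4$-cycles in $F_i$ through $v_i$. By Corollary~\ref{degreecondition}, $d_i\leq 2\sqrt{n}$ for every $i$. Each $4$-cycle $Q$ of $F$ is counted in $\sum_i c_i$ exactly once, for the unique $i$ for which $v_i$ is the first vertex of $Q$ to be removed by the min-degree algorithm, so $\sum_i c_i\leq n^2/a$. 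Partition $[n]$ into Type~I ($d_i<\sqrt{n}/\log n$), Type~II ($d_i\geq\sqrt{n}/\log n$ and $c_i\leq n/g$), and Type~III ($d_i\geq\sqrt{n}/\log n$ and $c_i>n/g$); Markov's inequality yields at most $(n^2/a)/(n/g)=n/\log n$ Type~III indices.

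To encode $F$ we write down the permutation $(v_1,\ldots,v_n)$, the degree sequence $(d_1,\ldots,d_n)$, and the list of Type~III indices (contributing $2^{O(n\log n)}$ choices in total); for each Type~I or Type~III index, the set $N_i\subseteq[n]$ directly; for each Type~II index, the fingerprint set $T_i$ produced by Lemma~\ref{mainlemmaC4} applied with $F=F_{i-1}$ and $I=N_i$, together with $N_i$ specified as a subset of the container $C(T_i)$. The lemma applies because $F_{i-1}$ is an $(i-1)$-vertex graph with $\delta(F_{i-1})\geq d_i-1$ (deleting $v_i$ lowers each neighbor's degree by exactly one), the common-neighbor bound $|N_{F_{i-1}}(u)\cap N_{F_{i-1}}(v)|\leq\sqrt{n}/b$ is inherited from the cleaning, and a direct count gives $e(F_{i-1}^2[N_i])=c_i\leq n/g$: every $4$-cycle $v_i$--$u$--$w$--$v$ of $F_i$ through $v_i$ corresponds to a length-$2$ path $u$--$w$--$v$ in $F_{i-1}$ joining two vertices of $N_i$, and conversely. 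Since $C(T_i)$ depends only on $T_i$ and $F_{i-1}$, a decoder who has already reconstructed $F_{i-1}$ can recover $N_i$ from $T_i$ and then $F_i$ at the next step.

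The logarithmic cost of the encoding per index is as follows:
\begin{itemize}
\item Type~I contributes $\log\binom{n}{d_i}\leq d_i\log(en/d_i)=O(\sqrt{n})$, summing to $O(n^{3/2})$.
\item Type~II contributes $\log\binom{n}{|T_i|}\leq 2\sqrt{n}$ (using $|T_i|\leq 2\sqrt{n}/\log n$) and $\log\binom{5n/d_i}{d_i}\leq\frac{2}{e\ln 2}\sqrt{5en}<4\sqrt{n}$ by Lemma~\ref{optimize} with $c=5$ and $\ell=1$, totaling at most $6\sqrt{n}$ per index and $\leq 6n^{3/2}$ overall.
\item Type~III contributes $\log\binom{n}{d_i}\leq\sqrt{n}\log n$ per index; since there are at most $n/\log n$ such indices, the total is $\leq n^{3/2}$.
\end{itemize}
The bookkeeping adds a further $2^{O(n\log n)}=2^{o(n^{3/2})}$. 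Summing gives $|\widehat{\mathcal{G}}_n|\leq 2^{10n^{3/2}}$ for $n$ sufficiently large.

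The main delicate point is that Lemma~\ref{mainlemmaC4} must be applied to $F_{i-1}$ rather than to $F_i$: using $F_i$ would introduce an extra $\binom{d_i}{2}$ contribution to $e(H[N_i])$ from the star at $v_i$, which already exceeds $n/g$ as soon as $d_i$ is on the scale of $\sqrt{n}$. The cleaning step (Lemmas~\ref{cleangraph} and~\ref{cleanfamily}) supplies the common-neighborhood hypothesis required by Lemma~\ref{mainlemmaC4}, and the choice $g=32\log^5 n$ is precisely what makes the Type~III bucket small enough to be encoded naively without exceeding the overall $2^{O(n^{3/2})}$ budget.
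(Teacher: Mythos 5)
Your proposal is correct and follows essentially the same approach as the paper: encode via min-degree ordering, classify indices by degree and local $4$-cycle count, and apply Lemma~\ref{mainlemmaC4} on $F_{i-1}$ for the ``large degree, few local $4$-cycles'' indices (the paper's $\mathcal{I}_3$). You make explicit two facts the paper leaves implicit---that $e(F_{i-1}^2[N_i])=c_i$ via the bijection between $4$-cycles through $v_i$ and $2$-paths in $F_{i-1}$ between vertices of $N_i$, and that one must use $F_{i-1}$ rather than $F_i$ to avoid the $\binom{d_i}{2}$ star contribution---both of which are worth spelling out; the only slip is the bound $\log\binom{n}{d_i}\leq 2\sqrt{n}\log n$ (not $\sqrt{n}\log n$) for Type~III, which changes that term to $2n^{3/2}$ but does not affect the final $2^{10n^{3/2}}$ estimate.
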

\begin{proof}
We construct the certificate of a graph $G$ in the following way. Let $Y_G:= v_n<v_{n-1}<\ldots<v_1$ be the min-degree ordering of $G$ and $D_G:= \{d_n, d_{n-1}, \ldots, d_1\}$ be the min-degree sequence of $G$. Let $G_i=G[v_i,\ldots, v_1]$, for every $i\in[n]$.  Define the set sequence $S_G:= \{S_{n}, S_{n-1},\ldots, S_2\}$, where $S_{i}=N_G(v_i, G_{i-1})$. Then $S_i\subseteq \{v_{i-1},\ldots, v_1\}$, and $|S_i|=d_i$. By the construction,  $[Y_G, D_G, S_G]$ uniquely determines the graph $G$ and so we build a certificate $[Y_G, D_G, S_G]$ for $G$. Therefore, instead of counting graphs, it is equivalent to estimate the number of their certificates.

For a graph $G\in \mathcal{\widehat{G}}_n$, its certificate has some important properties which would help us to achieve the desired bound. 
First, by Corollary~\ref{degreecondition}, its min-degree ordering $D_G= \{d_n, d_{n-1}, \ldots,$ $d_1\}$ satisfying $d_i\leq 2\sqrt{n}$. 
Let $f_i$ be the number of 4-cycles  in $G_i$ containing vertex $v_i$. Since each 4-cycle contributes exactly to one of $f_i$'s, we have $\sum_{i=1}^nf_i\leq n^2/a.$ We call $v_i$ a \textit{heavy} vertex if $f_i>n/g$; otherwise, $v_i$ is a \textit{light} vertex.
Another crucial fact about graphs in $\mathcal{\widehat{G}}_n$ is that the number of heavy vertices is at most
\begin{equation}\label{class1}
\frac{\sum_{v_i\in V_h}f_i}{n/g} \leq \frac{n^2/a}{n/g}
=\frac{n}{\log n}.
\end{equation}

Now we start to estimate the number of certificates which would generate graphs in $\mathcal{\widehat{G}}_n$.
By the above discussion, we first observe that the number of ways to choose the min-degree orderings and the min-degree sequences is at most
\begin{equation}\label{minorder}
n!(2\sqrt{n})^{n}.
\end{equation}
Then we fix a min-degree ordering $Y^*= v_n<v_{n-1}<\ldots<v_1$, and a min-degree sequence $D^*= \{d_n, \ldots, d_1\}$. Next, we fix the positions of heavy vertices and by (\ref{class1}) the number of ways is at most 
\begin{equation}\label{heavyv}
\sum_{i\leq \frac{n}{\log n}}\binom{n}{i}.
\end{equation}
A major part of the proof is to count set sequences $S= \{S_{n}, S_{n-1},\ldots, S_2\}$, where $S_i\subseteq \{v_{i-1},\ldots, v_1\}$ and $|S_i|=d_i$, such that the graph reconstructed by $[Y^*, D^*, S]$, denoted by $G_S$, are in $\mathcal{\widehat{G}}$. 
For every $2\leq i\leq n$, let $M_i$ be the number of choices for $S_i$ with fixed sets $S_{i-1},$ $\ldots,$ $S_2$. 
Define
\[
\mathcal{I}_1=\{i: v_i \text{ is a heavy vertex}\},\quad \mathcal{I}_2=\{i: d_i<\frac{\sqrt{n}}{\log n}\},
\]
and
\[
 \quad \mathcal{I}_3=\{i: v_i \text{ is a light vertex and } d_i\geq \frac{\sqrt{n}}{\log n}\}.
\]
For every $i\in\mathcal{I}_1$, since $|S_i|=d_i\leq 2\sqrt{n}$, we have a trivial upper bound 
\begin{equation}\label{4cycleI1}
M_i\leq \binom{i-1}{d_i}\leq\binom{n}{2\sqrt{n}}\leq n^{2\sqrt{n}}=2^{2\sqrt{n}\log n}.
\end{equation}
Similarly, for every $i\in\mathcal{I}_2$, we have 
\begin{equation}\label{4cycleI2}
M_i\leq \binom{i-1}{d_i}\leq \binom{n}{\sqrt{n}/\log n}\leq n^{\sqrt{n}/\log n}=2^{\sqrt{n}}.
\end{equation}

It remains to estimate $M_i$ for $i\in\mathcal{I}_3$. 
With fixed sets $S_{i-1},$ $\ldots,$ $S_2$, the graph $G_{i-1}=G_S[v_{i-1},\ldots,v_1]$ is uniquely determined. Since $G_{i-1}\subseteq G_S$ and $G_S\in\mathcal{\widehat{G}}$, for every $u, v\in V(G_{i-1})$, we have $|N_{G_{i-1}}(u)\cap N_{G_{i-1}}(v)|\leq \sqrt{n}/b$. Applying Lemma $\ref{mainlemmaC4}$ on $G_{i-1}$, we obtain that every eligible $S_i$ contains a subset $T$ of size at most $2\sqrt{n}/\log n$, which determines a set $C(T)\supseteq S_i$ of size at most $5n/d_i$. Since the number of choices for $T$ is at most 
$$\sum_{0\leq j\leq 2\sqrt{n}/\log n}\binom{i-1}{j}\leq 2\binom{i-1}{2\sqrt{n}/\log n}\leq 2\binom{n}{2\sqrt{n}/\log n}\leq 2^{2\sqrt{n}}, $$
we then have
\begin{equation}\label{4cycleI3}
M_i\leq \sum_{T}\binom{C(T)}{d_i}\leq\sum_T\binom{5n/d_i}{d_i}
\leq \sum_T 2^{\frac{2}{2^{1/\ln 2}\ln 2}\sqrt{5en}}
\leq \sum_T 2^{4\sqrt{n}}
\leq 2^{6\sqrt{n}}
\end{equation}
for every $i\in\mathcal{I}_3$,
where the third inequality is given by Lemma~\ref{optimize}.

Combining (\ref{4cycleI1}), (\ref{4cycleI2}) and (\ref{4cycleI3}), we obtain that the number of choices for $S$ is 
\begin{equation*}
\prod_{i=2}^nM_i\leq \prod_{i\in\mathcal{I}_1}M_i\prod_{i\in\mathcal{I}_2}M_i\prod_{i\in\mathcal{I}_3}M_i
\leq (2^{2\sqrt{n}\log n})^{\frac{n}{\log n}}(2^{\sqrt{n}})^n(2^{6\sqrt{n}})^n
\leq 2^{9n^{3/2}}.
\end{equation*}
Finally, together with (\ref{minorder}) and (\ref{heavyv}), the total number of certificates is at most
$$n!(2\sqrt{n})^n\sum_{i\leq \frac{n}{\log n}}\binom{n}{i}\prod_{i=2}^nM_i\leq n!(2\sqrt{n})^n2^n2^{9n^{3/2}}\leq2^{10n^{3/2}},$$
which leads to $|\mathcal{\widehat{G}}_n|\leq 2^{10n^{3/2}}.$
\end{proof}

\noindent\textit{Proof of Theorem \ref{maintheoremC4}.} Lemma \ref{cleanfamily} and Theorem \ref{mtc4} imply Theorem \ref{maintheoremC4}.\qed
\section{The number of graphs with sparse short cycles}
In the previous section, we estimated the number of graphs containing a few 4-cycles. Unfortunately, we are not ready to provide a similar result for longer cycles due to the failure of getting an appropriate counting lemma, like Lemma~\ref{countinglemma}. However, this method still works when the target graph has a sparse structure on short cycles. More specially, for $\ell\geq 4$, we are going to consider the family of graphs such that each of its edges is contained in only $O(1)$ cycles of length at most $2\ell$.  Following the idea from~\cite{KKS}, we construct a proper auxiliary graph and provide a suitable counting lemma on it. 

\subsection{Expansion properties of graphs with sparse short cycles}
Given a graph $G$, a vertex $v\in V(G)$ and an integer $k\geq 1$, let $\Gamma_k(v)$ be the set of vertices of $G$ at distance exactly $k$ from $v$. Recall that for an edge $uv\in E(G)$, $c_{k}(u, v; G)$ is the number of $k$-cycles in $G$ containing edge $uv$.

\begin{lemma}\label{slevel}
For integers $\ell\leq m$ and a constant $L>0$, let $F$ be an $m$-vertex graph such that for every $uv\in E(F)$ and $3\leq i\leq 2\ell$, $c_i(u, v)\leq L$. Then for every $1\leq k\leq \ell-1$ and $v\in V(F)$, we have $$d(u, \Gamma_k(v))\leq Lk$$ for all $u\in \Gamma_k(v)$.
\end{lemma}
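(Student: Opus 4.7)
The plan is to fix a BFS tree rooted at $v$ and, for each $u \in \Gamma_k(v)$, injectively associate to every neighbor $w$ of $u$ inside $\Gamma_k(v)$ an odd cycle of controlled length that passes through a single fixed edge $uu'$, where $u'$ is the parent of $u$ in the tree. Once this injection is in place, the hypothesis $c_i(u,u') \leq L$ applied to the relevant values of $i$ immediately yields the bound $kL$.

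More concretely, I would root a BFS tree of the component of $v$ at $v$ and, for each vertex $x$ at depth $\geq 1$, write $P_x$ for the unique $v$-$x$ path in the tree and $x'$ for its parent. Fix $u \in \Gamma_k(v)$ and an arbitrary $w \in N(u) \cap \Gamma_k(v)$. Both $P_u$ and $P_w$ have length exactly $k$ and begin at $v$; let $x$ be the vertex of largest depth that lies on both, and set $j = d(v,x)$. Then $j \in \{0,1,\dots,k-1\}$: the case $j = k$ would force $x$ to be the unique depth-$k$ vertex on each path, hence $u = x = w$, contradicting that $uw$ is an edge. Write $Q_u, Q_w$ for the sub-paths of $P_u, P_w$ from $x$ to $u$ and $w$; each has length $k - j \geq 1$ and, by maximality of $j$, they meet only at $x$. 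Moreover, no internal vertex of $Q_w$ can equal $u$ (and symmetrically for $w$ and $Q_u$): any vertex of $P_w$ at depth $k$ must be the endpoint $w$. Thus $Q_u \cup Q_w \cup \{uw\}$ is an honest cycle $C_w$ of odd length $2(k-j)+1$, and since $k - j \geq 1$ it contains the edge $uu'$.

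Finally, the map $w \mapsto C_w$ is injective, because $C_w$ contains the edge $uw$ and distinct $w$'s produce distinct edges at $u$. Its image is contained in the set of cycles through $uu'$ of odd length in $\{3, 5, \dots, 2k+1\}$, and since $k \leq \ell - 1$ we have $2k+1 \leq 2\ell - 1 \leq 2\ell$, so the hypothesis $c_i(u,u') \leq L$ applies to each such $i$. Summing over the $k$ admissible odd values gives
\[
d(u, \Gamma_k(v)) \;\leq\; \sum_{s=1}^{k} c_{2s+1}(u, u') \;\leq\; kL.
\]
The conceptual step I expect to be the main hurdle is identifying that the cycles $C_w$ all share the fixed edge $uu'$; once this is observed, the rest is a matter of verifying that the construction genuinely yields a cycle (the small technical point being $u \notin Q_w$ and $w \notin Q_u$, which both follow from $d(v,u) = d(v,w) = k$).
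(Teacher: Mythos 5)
Your proof is correct and follows essentially the same strategy as the paper: associate to each neighbor $w$ of $u$ in $\Gamma_k(v)$ an odd cycle through a fixed edge $uu'$ and then invoke $c_{2s+1}(u,u')\leq L$ for $s=1,\dots,k$. Where the paper simply takes arbitrary shortest paths $P_u$, $P_w$ and asserts that the closed walk $P_u+P_w+\{uw\}$ contains an odd cycle through both $uu'$ and $uw$, you pin this down by fixing a BFS tree so that the two root-to-leaf paths share a prefix and then diverge, making the extracted cycle $Q_u\cup Q_w\cup\{uw\}$ explicit and unambiguous; you also phrase the count as a direct injection rather than the paper's contradiction. These are presentational refinements of the same argument, and your version is arguably a bit tighter on the one point the paper leaves implicit.
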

\begin{proof}
Suppose there exists a vertex $u\in\Gamma_k(v)$ such that $d(u, \Gamma_k(v))\geq Lk+1$. Since $u\in \Gamma_k(v)$, there exists a $(u, v)$-path $P_u$ of length $k$. Let $u'$ be the neighbor of $u$ in $P_u$. Similarly, for every vertex $w\in N(u, \Gamma_k(v))$, there is a $(w, v)$-path $P_w$ of length $k$. Note that every $P_u+P_w+\{uw\}$ forms a closed walk of length $2k+1$, which contains an odd cycle of length at most $2k+1$ containing edges $uu'$ and $uw$. Since $d(u, \Gamma_k(v))\geq Lk+1$, we have at least $Lk+1$ distinct odd cycles of length at most $2k+1$ containing $uu'$. However, since $c_h(u, u')\leq L$ for every odd $h\leq 2k+1$, there are at most $Lk$ odd cycles of length at most $2k+1$ containing $uu'$, which is a contradiction.
\end{proof}

\begin{lemma}\label{llevel}
For integers $\ell\leq m$ and a constant $L>0$, let $F$ be an $m$-vertex graph such that for every $uv\in E(F)$ and $3\leq i\leq 2\ell$, $c_i(u, v)\leq L$. Then for every $2\leq k\leq \ell$ and $v\in V(F)$, we have $$d(u, \Gamma_{k-1}(v))\leq L(k-1)+1$$ for all $u\in \Gamma_{k}(v)$. 
\end{lemma}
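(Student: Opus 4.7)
The plan is to adapt the closed-walk argument of Lemma~\ref{slevel}, but using even closed walks of length $2k$ in place of odd ones of length $2k+1$, and counting \emph{even} cycles through a distinguished edge at $u$. Fix $u \in \Gamma_k(v)$ together with any shortest $(u,v)$-path $P^* = u_0 u_1 \cdots u_k$, where $u_0=u$ and $u_k=v$. The second vertex $u_1$ is automatically a neighbor of $u$ in $\Gamma_{k-1}(v)$, and I single it out as the \emph{distinguished} neighbor; it then suffices to show that $u$ has at most $L(k-1)$ other neighbors in $\Gamma_{k-1}(v)$.

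For every such non-distinguished neighbor $w \in N(u) \cap \Gamma_{k-1}(v) \setminus \{u_1\}$, fix a shortest $(w,v)$-path $P_w = w_0 w_1 \cdots w_{k-1}$ with $w_0=w$ and $w_{k-1}=v$. Note that $u \notin P_w$ (every vertex of $P_w$ has distance at most $k-1$ from $v$, whereas $d(u,v)=k$) and $w \notin P^*$ (since $u_1$ is the only vertex of $P^*$ lying in $\Gamma_{k-1}(v)$). Splicing $uw$, $P_w$, $P^*$ reversed, and $u_1 u$ together produces a closed walk of length $2k$. Let $j^*$ be the smallest index with $w_{j^*} \in P^*$; then $1 \le j^* \le k-1$, and writing $w_{j^*} = u_{i^*}$, the fact that both paths are shortest forces
\[
k - i^* \;=\; d(u_{i^*},v) \;=\; k-1-j^*, \qquad \text{so } i^* = j^* + 1.
\]
Cutting the walk at this first intersection yields the cycle
\[
C_w:\ u_0 \to w_0 \to w_1 \to \cdots \to w_{j^*} = u_{i^*} \to u_{i^*-1} \to \cdots \to u_1 \to u_0
\]
of length $1 + j^* + (i^*-1) + 1 = 2(j^*+1)$. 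This length is even and at most $2k$; simplicity of $C_w$ follows from the minimality of $j^*$ (no internal $w_j$ lies on $P^*$) together with $u \notin P_w$.

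Each $C_w$ contains both the distinguished edge $uu_1$ and the edge $uw$, so distinct $w$'s give distinct even cycles through $uu_1$. The hypothesis bounds the number of $2i$-cycles through $uu_1$ by $L$ for every $i \in \{2, \ldots, k\}$ — all these lengths lie in the allowed range $3 \le 2i \le 2\ell$ — so the number of non-distinguished neighbors is at most $L(k-1)$, as required. The main obstacle is arranging the walk-to-cycle extraction so that the resulting cycle has controlled parity and is genuinely simple; this is precisely what the shortest-path identity $i^* = j^* + 1$ (combined with the minimality of $j^*$) delivers, after which the counting step proceeds in exact parallel with Lemma~\ref{slevel}.
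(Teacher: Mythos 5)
Your proof is correct and follows the same approach as the paper's: fix a distinguished neighbor $u'$ of $u$ in $\Gamma_{k-1}(v)$, splice shortest paths to $v$ into a closed walk of length $2k$, and extract an even simple cycle of length at most $2k$ through both $uu'$ and $uw$, then count using $c_{2i}(u,u')\leq L$ for $2\leq i\leq k\leq\ell$. The only differences are cosmetic: the paper chooses $u'$ arbitrarily in $N(u,\Gamma_{k-1}(v))$ while you fix it along a shortest $(u,v)$-path, and you make explicit the first-intersection argument (the identity $i^*=j^*+1$, giving even length and simplicity) that the paper asserts without detail when it says the closed walk ``contains an even cycle of length at most $2k$ containing edges $uu'$ and $uw$.''
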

\begin{proof}
Suppose there exists a vertex $u\in\Gamma_k(v)$ such that $d(u, \Gamma_{k-1}(v))\geq L(k-1)+2$. Let $u'$ be a vertex in $N(u, \Gamma_{k-1}(v))$. Since $u'\in \Gamma_{k-1}(v)$, there exists a $(u', v)$-path $P_{u'}$ of length $k-1$. Similarly, for every vertex $w\in N(u, \Gamma_{k-1}(v))\setminus\{u'\}$, there is a $(w, v)$-path $P_w$ of length $k-1$. Note that every $P_{u'}+P_w+\{uu'\}+\{uw\}$ forms a closed walk of length $2k$, which contains an even cycle of length at most $2k$ containing edges $uu'$ and $uw$. Since $|N(u, \Gamma_{k-1}(v))\setminus\{u'\}|\geq L(k-1)+1$, we have at least $L(k-1)+1$ distinct even cycles of length at most $2k$ containing $uu'$. However, since $c_h(u, u')\leq L$ for every even $4\leq h\leq 2k$, there are at most $L(k-1)$ even cycles of length at most $2k$ containing $uu'$, which is a contradiction.
\end{proof}

Now, we give a lemma on the expansion of graphs with sparse short cycles. This lemma can be viewed as a generalization of Lemma 11 in~\cite{KKS}.

\begin{lemma}\label{exp}
For integers $\ell, d\leq m$ and a constant $L\ll d$, let $F$ be an $m$-vertex graph with minimum degree at least $d-1$, such that for every $uv\in E(F)$ and $3\leq i\leq 2\ell$, $c_i(u, v)\leq L$. Suppose $v$ is a vertex in $F$ with degree $d(v)$. Then for every $1\leq k\leq \ell$, we have
 $$|\Gamma_k(v)|\geq \frac{d(v)d^{k-1}}{g_k(L)}$$ 
 for some constants $g_k(L)$ which only depend on $k$ and $L$. 
\end{lemma}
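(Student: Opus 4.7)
The plan is to prove the statement by induction on $k$, counting edges between consecutive BFS levels $\Gamma_{k-1}(v), \Gamma_k(v), \Gamma_{k+1}(v)$ and using Lemmas on slevel and llevel to bound the degrees within a level and from one level down, so that most of the $\approx d$ neighbors of a vertex in $\Gamma_k(v)$ must land in $\Gamma_{k+1}(v)$.

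\textbf{Base case.} For $k=1$, since every neighbor of $v$ is at distance exactly $1$ from $v$, we have $|\Gamma_1(v)|=d(v)$, so the statement holds with $g_1(L)=1$.

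\textbf{Inductive step.} Assume $|\Gamma_k(v)|\geq d(v)d^{k-1}/g_k(L)$ for some $1\leq k\leq \ell-1$. Fix a vertex $u\in\Gamma_k(v)$. Its neighbors lie in $\Gamma_{k-1}(v)\cup\Gamma_k(v)\cup\Gamma_{k+1}(v)$, because distances in $F$ can change by at most $1$ along an edge. By Lemma on slevel applied to $u\in\Gamma_k(v)$, $d(u,\Gamma_k(v))\leq Lk$; by Lemma on llevel applied to $u\in\Gamma_k(v)$, $d(u,\Gamma_{k-1}(v))\leq L(k-1)+1$. Since $d_F(u)\geq d-1$, it follows that
\[
d(u,\Gamma_{k+1}(v))\;\geq\;(d-1)-Lk-\bigl(L(k-1)+1\bigr)\;=\;d-2-L(2k-1).
\]
Because $L\ll d$ (and $k\leq\ell$ is a constant), the right hand side is at least $d/2$ for $d$ sufficiently large. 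Summing over $u\in\Gamma_k(v)$ gives
\[
e\bigl(\Gamma_k(v),\Gamma_{k+1}(v)\bigr)\;\geq\;\frac{d}{2}\,|\Gamma_k(v)|.
\]

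\textbf{Upper bound on the back-degree.} On the other side, for $w\in\Gamma_{k+1}(v)$, Lemma on llevel (with index $k+1$, which requires $k+1\leq\ell$, consistent with our range) gives $d(w,\Gamma_k(v))\leq Lk+1$. Therefore
\[
e\bigl(\Gamma_k(v),\Gamma_{k+1}(v)\bigr)\;\leq\;(Lk+1)\,|\Gamma_{k+1}(v)|.
\]
Combining the two bounds and using the inductive hypothesis,
\[
|\Gamma_{k+1}(v)|\;\geq\;\frac{d\,|\Gamma_k(v)|}{2(Lk+1)}\;\geq\;\frac{d(v)\,d^{k}}{2(Lk+1)\,g_k(L)},
\]
so setting $g_{k+1}(L):=2(Lk+1)\,g_k(L)$ completes the induction. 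Since $\ell$ is fixed, iterating gives explicit constants depending only on $k$ and $L$ (roughly $g_k(L)=\prod_{j=1}^{k-1}2(Lj+1)$).

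\textbf{Main obstacle.} The computation is routine once the two previous lemmas are in place; the only subtle point is verifying that the indices stay inside the allowed range of Lemmas on slevel/llevel at every step of the induction (both require the relevant level index to be at most $\ell$), and that the hypothesis $L\ll d$ is strong enough to absorb the $-2-L(2k-1)$ loss so that the $d/2$ estimate is valid for every $k\leq\ell$. Both are handled by noting that $k$ and $L$ are absolute constants while $d$ may be taken sufficiently large.
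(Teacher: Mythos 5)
Your proposal is correct and follows essentially the same argument as the paper: induction on $k$, using Lemmas \ref{slevel} and \ref{llevel} to show that each vertex of $\Gamma_k(v)$ sends at least $d/2$ edges forward to $\Gamma_{k+1}(v)$, then dividing by the back-degree bound $Lk+1$ to get the recursion $g_{k+1}(L)=2(Lk+1)g_k(L)$. The only difference is cosmetic (you spelled out the index-range checks a bit more explicitly).
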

\begin{proof}
The case $k=1$ is trivially true with $g_1(L)=1$. Suppose that the lemma is true for $k<\ell$, i.e.\ $|\Gamma_k(L)|\geq d(v)d^{k-1}/g_k(L)$ for some constant $g_k(L)$. 

For every vertex $u\in\Gamma_{k}(v)$, neighbors of $u$ only appear in $\Gamma_{k-1}(v)$, $\Gamma_{k}(v)$ and $\Gamma_{k+1}(v)$. By Lemmas~\ref{slevel} and \ref{llevel}, we have 
\begin{equation}\label{exdegree}
d(u, \Gamma_{k+1}(v))\geq (d-1)-d(u, \Gamma_{k-1}(v))-d(u, \Gamma_{k}(v))\geq d-2(Lk+1)+L\geq \frac d 2
\end{equation}
for all $u\in\Gamma_{k}(v)$ and this gives
$$e(\Gamma_{k}(v), \Gamma_{k+1}(v))\geq \frac{d|\Gamma_{k}(v)|}{2}.$$
Again by Lemma~\ref{llevel}, we know that for every $u\in \Gamma_{k+1}(v)$, $d(u, \Gamma_{k}(v))\leq Lk+1$. Therefore, we have
$$|\Gamma_{k+1}(v)|\geq \frac{e(\Gamma_{k}(v), \Gamma_{k+1}(v))}{Lk+1}\geq \frac{d|\Gamma_{k}(v)|}{2(Lk+1)}\geq \frac{d(v)d^{k}}{2(Lk+1)g_k(L)}=\frac{d(v)d^{k}}{g_{k+1}(L)}$$
for $g_{k+1}(L)=2(Lk+1)g_k(L)$ and the lemma follows by induction.
\end{proof}
Lemma~\ref{exp} gives an upper bound on the maximum degree of the graph with sparse short cycles.
\begin{cor}\label{mdegree}
For integers $\ell, d\leq m$ and a constant $L\ll d$, let $F$ be an $m$-vertex graph with minimum degree $d-1$, such that for every $uv\in E(F)$ and $3\leq i\leq 2\ell$, $c_i(u, v)\leq L$. Then $$\Delta(F)\leq \frac{m}{d^{\ell-1}}\cdot g_\ell(L),$$
where $g_{\ell}(L)$ is the constant defined in Lemma~\ref{exp}.
\end{cor}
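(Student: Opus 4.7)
The plan is to derive Corollary~\ref{mdegree} as an immediate consequence of Lemma~\ref{exp}. First I would pick a vertex $v \in V(F)$ achieving the maximum degree, so that $d(v) = \Delta(F)$. Applying Lemma~\ref{exp} with $k = \ell$ to this $v$ gives
\[
|\Gamma_\ell(v)| \geq \frac{d(v)\, d^{\ell-1}}{g_\ell(L)} = \frac{\Delta(F)\, d^{\ell-1}}{g_\ell(L)}.
\]

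Next I would use the trivial observation that $\Gamma_\ell(v) \subseteq V(F)$, hence $|\Gamma_\ell(v)| \leq m$. Combining these two inequalities and rearranging gives
\[
\Delta(F) \leq \frac{m}{d^{\ell-1}} \cdot g_\ell(L),
\]
which is exactly the claimed bound.

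There is no real obstacle here; the proof is a one-line consequence of Lemma~\ref{exp} applied at a vertex of maximum degree. The only thing one should briefly verify is that the hypotheses of Lemma~\ref{exp} are met, namely that $L \ll d$ and that $F$ has minimum degree at least $d - 1$ with $c_i(u,v) \leq L$ for all $uv \in E(F)$ and $3 \leq i \leq 2\ell$; these all transfer verbatim from the hypotheses of the corollary.
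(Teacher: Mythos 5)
Your proof is correct and matches the paper's argument essentially verbatim: apply Lemma~\ref{exp} with $k = \ell$ to bound $|\Gamma_\ell(v)|$ from below, then use $|\Gamma_\ell(v)| \leq m$ and rearrange. The only cosmetic difference is that the paper applies the inequality to every vertex and deduces a uniform degree bound, whereas you apply it to a vertex of maximum degree; these are the same argument.
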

\begin{proof}
By Lemma~\ref{exp}, for every $v\in V(F)$, we have
$$|\Gamma_{\ell}(v)|\geq \frac{d(v)d^{\ell-1}}{g_{\ell}(L)},$$ which gives 
$$d(v)\leq\frac{|\Gamma_{\ell}(v)|}{d^{\ell-1}}g_{\ell}(L)\leq \frac{m}{d^{\ell-1}}g_{\ell}(L),$$
This implies the corollary.
\end{proof}

\subsection{Construction of the auxiliary graph}
In this section, we aim to give a generalization of Lemma~\ref{countinglemma} for longer cycles. We use a definition of \textit{composed walk} from \cite{KKS}. For every integer $k\geq 1$, call a $2k$-walk $x_0x_1\ldots x_{2k}$ a \textit{composed walk} if $x_0\ldots x_{k}$ and $x_{k}\ldots x_{2k}$ are two shortest paths and they are different but not necessarily vertex-disjoint or edge-disjoint. A composed walk is said to be \textit{closed} if its endpoints are the same.

\begin{lemma}\label{cwalk}
For integers $\ell, \Delta\leq m$ and a constant $L\ll \Delta$, let $F$ be an $m$-vertex graph with maximum degree $\Delta$, such that for every $uv\in E(F)$ and $3\leq k\leq 2\ell$, $c_k(u, v)\leq L$. Then for every vertex $u\in V(F)$ and every integer $2\leq s\leq \ell-1$, the number of closed composed walks of length $2s$ with endpoints $u$ is at most $$\Delta^{s-1} \alpha_s(L)$$ for some constants $\alpha_s(L)$ which only depends on $s$ and $L$.
\end{lemma}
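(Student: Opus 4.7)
The plan is to show that each closed composed walk of length $2s$ based at $u$ admits a canonical decomposition as (stem, cycle, tail), and then to bound the number of such decompositions.

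Write $W = x_0 x_1 \cdots x_{2s}$ with $x_0 = x_{2s} = u$, and regard $P_1 := x_0 x_1 \cdots x_s$ and the reverse $P_2 := x_{2s} x_{2s-1} \cdots x_s$ as two shortest $u$-$x_s$ paths. Since $W$ is a composed walk we have $P_1 \ne P_2$, so there is a smallest index $j \ge 1$ with $x_j \ne x_{2s-j}$ (the divergence) and then a smallest index $i$ with $j \le i \le s$ and $x_i = x_{2s-i}$ (the reconvergence). Set $v := x_{j-1}$, $w := x_i$, and $k := i-j+1 \in \{2, \ldots, s\}$. Because $P_1$ and $P_2$ are both shortest $u$-$x_s$ paths, $d_F(u, x_t) = t$ and $d_F(u, x_{2s-t}) = t$ for $0 \le t \le s$, so a vertex appearing at position $t$ along one path cannot reappear at any other position along either path. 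In particular, the interior vertices of the two arcs $v x_j \cdots x_{i-1} w$ and $v x_{2s-j} \cdots x_{2s-i+1} w$ are disjoint, so they assemble into a cycle $C$ of length $2k$ through $v$ and $w$; and since $P_1$ is a shortest $u$-$x_s$ path of length $(j-1) + k + (s-i) = s$, we must have $d_F(v, w) = k$, so each half of $C$ is a shortest $v$-$w$ path in $F$.

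To count walks with fixed parameters $(k, j)$, observe that the stem is a shortest path of length $j-1$ starting at $u$, of which there are at most $\Delta^{j-1}$; the tail is a shortest path of length $s - j - k + 1$ starting at $w$, contributing at most $\Delta^{s-j-k+1}$ choices; and the number of cycles of length $2k$ through $v$ is at most $\Delta L / 2$, since each such cycle contains two edges at $v$ and, by hypothesis, each edge lies in at most $L$ cycles of length $2k$. Multiplying by a factor $2$ for the choice of which half of $C$ belongs to $P_1$, we obtain at most $\Delta^{s-k+1} L$ walks for each $(k, j)$.

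Summing over $j \in \{1, \ldots, s - k + 1\}$ and $k \in \{2, \ldots, s\}$ yields
\[
\sum_{k=2}^{s} (s - k + 1)\, \Delta^{s-k+1} L \;\le\; 2 s L \cdot \Delta^{s-1}
\]
for $\Delta \ge 2$, giving the desired bound with $\alpha_s(L) := 2sL$. The main subtlety lies in the decomposition step: one must verify that the two interior arcs of $C$ are vertex-disjoint, so that $C$ is a genuine cycle of length $2k$ to which the hypothesis $c_{2k} \le L$ applies, and that $d_F(v,w)=k$ so that both halves qualify as shortest paths. Both points follow from the simple observation that along a shortest path from $u$ the distance from $u$ is a strictly increasing function of position, which forbids any repetition.
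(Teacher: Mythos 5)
Your decomposition idea --- locating the first divergence at $j$ and first reconvergence at $i$ to peel off a cycle $C$ --- is close in spirit to the paper's proof, and your verification that the two arcs of $C$ are internally vertex-disjoint (via monotonicity of distance from $u$ along shortest paths) and that both halves of $C$ are shortest $v$-$w$ paths is correct. However, there is a genuine gap in your treatment of the tail. After the reconvergence at $w = x_i = x_{2s-i}$, the two halves of the walk continue as $x_i x_{i+1}\cdots x_s$ and $x_{2s-i} x_{2s-i-1}\cdots x_s$, and nothing forces these to coincide: they may diverge and reconverge again, producing further cycles. Your count treats the tail as a single shortest path of length $s-i$, which covers only the case $x_t = x_{2s-t}$ for all $t\geq i$. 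Walks with more than one divergence/reconvergence pair, such as $u,a,b,c,d,c',b,a',u$ with $a\neq a'$ and $c\neq c'$ (here $j=1$, $i=2$, and the remainder rooted at $b$ is itself a closed composed walk of length $4$), are simply not captured by the stem--cycle--tail decomposition.

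Fixing this requires handling the leftover piece recursively: after removing the first cycle, what remains rooted at $w$ is either a single shortest path of length $s-i$ or a closed composed walk of length $2(s-i)$ rooted at $w$, and the second possibility forces an induction on $s$. This is precisely what the paper does: it partitions $\mathcal{W}_s(u)$ according to the first index $i$ with $x_i=x_{2s-i}$, bounds the remainder either as a path (at most $\Delta^{s-i}$ choices) or via the inductive hypothesis on $\mathcal{W}_{s-i}(x_i)$, and arrives at the recursion $\alpha_s(L)=\alpha_{s-1}(L)+2(s-2)L+1$ rather than the closed form $2sL$ you obtain. The discrepancy in the constants (yours is linear in $s$, the paper's is quadratic) is a symptom of the undercount.
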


\begin{proof}
For every vertex $u\in V(F)$ and every integer $2\leq s\leq \ell-1$, let $\mathcal{W}_s(u)$ be the set of closed composed walks of length $2s$ with endpoints $u$.
For the case $s=2$, the lemma is true with $\alpha_2(L)=L$. This is because that a closed composed walk of length $4$ with endpoint $u$ is exactly a 4-cycle containing $u$ and then we have $|\mathcal{W}_2(u)|\leq \sum_{v\in N(u)}c_4(u, v)\leq \Delta L$.

Suppose for $s-1< \ell-1$, the lemma is true for all integers $k\leq s-1$, i.e.\ for every $v\in V(F)$, $|\mathcal{W}_k(v)|\leq \Delta^{k-1} \alpha_k(L)$ with some constants $\alpha_k(L)$. Fix an arbitrary vertex $u\in V(F)$,  and let 
$$\mathcal{W}^i_s(u)=\{ux_1x_2\ldots x_{2s-1}u\in\mathcal{W}_s(u) \mid i\textrm{ is the first integer such that } x_{i}=x_{2s-i}\}$$ 
for every $1\leq i\leq s$. Then we have $\mathcal{W}_s(u)=\bigcup_{i=1}^s\mathcal{W}^i_s(u).$

First, every composed walk $W\in \mathcal{W}^1_s(u)$ consists of an edge $ux_1$ and a closed composed walk of length $2s-2$ with endpoints $x_1$. Therefore, we have
$$|\mathcal{W}^1_s(u)|\leq \sum_{x_i\in N(u)}|\mathcal{W}_{s-1}(x_1)|\leq \Delta^{s-1} \alpha_{s-1}(L).$$
Let $2\leq i\leq s-1$. For every composed walk 
$$W=ux_1x_2\ldots x_{2s-1}u\in \mathcal{W}^i_s(u),$$
$\{ux_1\ldots x_i$ $x_{2s-(i-1)}\ldots x_{2s-1}u\}$ forms a cycle $C$ of length $2i$ containing $u$. Since for every $x_1\in N(u)$, $c_{2i}(u, x_1)\leq L$, then the number of choices for $C$ is at most $\Delta L$. For a fixed $C$ and $x_i\in C$, $W-C$ forms a path of length $(s-i)$ with endpoints $x_i$ or a closed composed walks of length $2(s-i)$ with endpoints $x_i$. In the first case there are at most $\Delta^{s-i}$ choices, while in the later case there are at most $|\mathcal{W}_{s-i}(x_i)|$ choices. 
Therefore, we have
\begin{equation*}\begin{split}
|\mathcal{W}^i_s(u)|&\leq \Delta L\cdot (\Delta^{s-i}+|\mathcal{W}_{s-i}(x_i)|)\\
&\leq \Delta L\cdot (\Delta^{s-i}+ \Delta^{s-i-1} \alpha_{s-i}(L))\\
&\leq 2\Delta^{s-i+1}L\leq 2\Delta^{s-1}L.
\end{split}\end{equation*}
Finally, every composed walk $W\in \mathcal{W}^s_s(u)$ is a cycle of length $2s$ containing $u$, and then we have 
$$|\mathcal{W}^s_s(u)|\leq \sum_{v\in N(u)}c_{2s}(u, v)\leq \Delta L.$$
Hence, we have 
\begin{equation*}\begin{split}
|\mathcal{W}_s(u)|&=\bigcup_{i=1}^{s}|\mathcal{W}^i_s(u)|\leq\Delta^{s-1} \alpha_{s-1}(L)+2(s-2)\Delta^{s-1}L+\Delta L\leq \Delta^{s-1} \alpha_{s}(L)
\end{split}\end{equation*}
for $\alpha_{s}(L)=\alpha_{s-1}(L)+2(s-2)L+1,$ and the lemma follows by induction.
\end{proof}
For an integer $\ell\geq 3$ and a graph $F$, denote by $F^{\ell}$ the multigraph defined on $V(F)$ such that for every distinct $u, v\in V(F^{\ell})$, the multiplicity of $uv$ in $F^{\ell}$ is the number of composed $(u, v)$-walks of length $2(\ell-1)$ in $F$. 

\begin{lemma}\label{fwalk}
For an integer $\ell\geq 3$ and a constant $L>0$, let $n$ be a sufficiently large integer. Let $m$ and $d$ be the integers satisfying $m\leq n$ and $d\geq \frac{n^{1/\ell}}{\log n}$. Suppose $F$ is an $m$-vertex graph with minimum degree $d-1$, such that for every $uv\in E(F)$ and $3\leq k\leq 2\ell$, $c_k(u, v)\leq L$.  Then for every set $J\subseteq V(F)$ of size at least $2^{\ell}n/d^{\ell-1}$, we have $$e(F^{\ell}[J])\geq\frac{d^{2\ell-2}|J|^2}{2^{2\ell+1}n} .$$
\end{lemma}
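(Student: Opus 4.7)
The plan is to imitate the proof of Lemma~\ref{countinglemma}, replacing paths of length~$2$ by composed walks of length $2(\ell-1)$ and summing over their middle vertices. Each composed walk $x_0x_1\cdots x_{2(\ell-1)}$ has a canonical middle vertex $x_{\ell-1}$ lying at distance $\ell-1$ from both endpoints, and decomposes as an ordered pair of \emph{distinct} shortest paths of length $\ell-1$ emanating from $x_{\ell-1}$. For each $w\in V(F)$ let $P_J(w)$ denote the number of shortest paths of length $\ell-1$ from $w$ that end in a vertex of $J$. Then the number of ordered composed walks through $w$ with both endpoints in $J$ is exactly $P_J(w)(P_J(w)-1)$, and
$$e(F^{\ell}[J])\ \ge\ \frac12\sum_{w\in V(F)} P_J(w)(P_J(w)-1),$$
the $\tfrac12$ being the ordered-to-unordered adjustment for edges of the multigraph $F^{\ell}$.

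The first substantive step is to show that every vertex $v$ emits at least $(d/2)^{\ell-1}$ shortest paths of length $\ell-1$. This follows by iterating the inequality
$$d(u,\Gamma_{k+1}(v))\ge d/2\quad\text{for all }u\in\Gamma_k(v),\ 1\le k<\ell,$$
which is precisely (\ref{exdegree}) from the proof of Lemma~\ref{exp}. Growing a shortest path greedily from $v$, at each step we have at least $d/2$ extensions into the next level, giving at least $(d/2)^{\ell-1}$ shortest paths of length $\ell-1$ starting at $v$. Double counting pairs $(w,P)$, where $P$ is a shortest path of length $\ell-1$ from $w$ ending in $J$, then yields
$$\sum_{w\in V(F)} P_J(w)\ =\ \sum_{v\in J} P(v)\ \ge\ \frac{|J|\,d^{\ell-1}}{2^{\ell-1}}.$$

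Finally, I would apply Cauchy--Schwarz. The hypotheses $|J|\ge 2^{\ell}n/d^{\ell-1}$ and $m\le n$ force the average of $P_J(w)$ over $w\in V(F)$ to be at least~$2$, so
$$\sum_w P_J(w)(P_J(w)-1)\ \ge\ \sum_w P_J(w)^2\,-\,\sum_w P_J(w)\ \ge\ \frac{1}{2}\cdot\frac{\bigl(\sum_w P_J(w)\bigr)^2}{m}.$$
Combining this with the lower bound on $\sum_w P_J(w)$ and using $m\le n$ gives $\sum_w P_J(w)(P_J(w)-1)\ge |J|^2d^{2(\ell-1)}/(2^{2\ell-1}n)$, whence $e(F^{\ell}[J])\ge |J|^2d^{2(\ell-1)}/(2^{2\ell+1}n)$.

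The main obstacle is keeping the shortest-path lower bound $(d/2)^{\ell-1}$ uniform in $L$: Lemma~\ref{exp} itself carries an $L$-dependent constant $g_{\ell-1}(L)$, which if invoked directly would contaminate the clean constant $2^{2\ell+1}$ in the conclusion. The fix is to bypass the final statement of Lemma~\ref{exp} and instead reuse only the intermediate inequality (\ref{exdegree}) inside its proof, which is genuinely $L$-free provided $d\gg L$; this is guaranteed once $n$ is large enough since $d\ge n^{1/\ell}/\log n$. A secondary technical point, easily dispatched using Lemma~\ref{cwalk}, is that closed composed walks contribute only a lower-order correction to $e(F^{\ell}[J])$ and do not spoil the final constant.
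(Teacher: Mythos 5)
Your proof follows the paper's argument essentially step for step: decompose composed walks of length $2(\ell-1)$ by their middle vertex, lower-bound the number of shortest paths of length $\ell-1$ leaving each vertex of $J$ by $(d/2)^{\ell-1}$ using only the intermediate inequality (\ref{exdegree}) (which, as you correctly note, is $L$-free and avoids the $g_\ell(L)$ pollution), and then apply convexity to get the quadratic lower bound; the paper uses Jensen in the form $\sum\binom{b_u}{2}\ge m\binom{(\sum b_u)/m}{2}$ while you use Cauchy--Schwarz plus the observation $\bar P\ge 2$, which is an equivalent calculation. The one slip is the direction of your first displayed inequality: since $F^{\ell}$ is defined only on \emph{distinct} pairs, $e(F^{\ell}[J]) = \tfrac12\sum_w P_J(w)(P_J(w)-1) - |\mathcal{W}_c|$, so you have $e(F^{\ell}[J])\le \tfrac12\sum_w P_J(w)(P_J(w)-1)$, not $\ge$; the desired $\ge$ only emerges after subtracting the closed composed walks and showing, via Lemma~\ref{cwalk} together with the maximum-degree bound from Corollary~\ref{mdegree}, that they form a vanishing fraction of $\mathcal{W}$. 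You do flag exactly this correction in your final paragraph, and your factor-of-two bookkeeping in the last display accounts for it implicitly, so the argument closes correctly once that sentence is moved to its proper place and the sign in the first display is flipped.
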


\begin{proof}
Let $\mathcal{W}$ be the set of composed walks of length $2(\ell-1)$ with endpoints in $J$, and $\mathcal{W}_c$ be the set of closed composed walks of length $2(\ell-1)$ with endpoint in $J$. By the definition of $F^{\ell}$, we have $$e(F^{\ell}[J])=|\mathcal{W}|-|\mathcal{W}_c|.$$ By Lemma~\ref{cwalk}, we know that
$$|\mathcal{W}_c|\leq \Delta^{\ell-2}\alpha_{\ell-1}(L)\cdot|J|,$$
where $\Delta$ is the maximum degree of $F$, which, by Corollary~\ref{mdegree}, satisfies
\begin{equation}\label{umdegree}
\Delta\leq \frac{m}{d^{\ell-1}}\cdot g_{\ell}(L)\leq \frac{n}{d^{\ell-1}}\cdot g_{\ell}(L)\leq\frac{d^{\ell}\log^{\ell} n}{d^{\ell-1}}\cdot g_{\ell}(L)=d\log^{\ell} n\cdot g_{\ell}(L).
\end{equation}

Now, it remains to estimate the lower bound of $\mathcal{W}$. For every $v\in J$, let $a_v$ be the number of shortest paths of length $\ell-1$ such that $v$ is one of the endpoints. For every $u\in V(F)$, let $\mathcal{P}_u$ be the set of shortest paths of length $\ell-1$ such that one endpoint is $u$ and another endpoint is in $J$. Let $b_u=|\mathcal{P}_u|$ and then we have $\sum_{u\in V(F)}b_u=\sum_{v\in J}a_v.$
By (\ref{exdegree}), we have $$\sum_{u\in V(F)}b_u=\sum_{v\in J}a_v\geq\sum_{v\in J}(d/2)^{\ell-1}=\frac{d^{\ell-1}|J|}{2^{\ell-1}}.$$
Note that for every vertex $u\in V(F)$ and $P_1, P_2\in \mathcal{P}_u$, $P_1+P_2$ forms a composed walk in $\mathcal{W}$ and vice versa. Therefore, we have
$$|\mathcal{W}|=\sum_{u\in V(F)}\binom{b_u}{2}\geq m\binom{\frac{\sum_{u}b_u}{m}}{2}\geq m\binom{\frac{d^{\ell-1}|J|}{2^{\ell-1}\cdot m}}{2}\geq \frac{d^{2\ell-2}|J|^2}{2^{2\ell}m}\geq\frac{d^{2\ell-2}|J|^2}{2^{2\ell}n}$$
for $|J|\geq 2^{\ell}n/d^{\ell-1}\geq2^{\ell}m/d^{\ell-1}.$
Note that
\begin{equation*}
\begin{split}
\frac{|\mathcal{W}_c|}{|\mathcal{W}|}&\leq \frac{\Delta^{\ell-2}\alpha_{\ell-1}(L)\cdot|J|}{\frac{d^{2\ell-2}|J|^2}{2^{2\ell}n}}
\leq \frac{d^{\ell-2}\log^{\ell(\ell-2)} n}{d^{2\ell-2}}\cdot\frac{n}{|J|}\cdot g_{\ell}^{\ell-2}(L)\alpha_{\ell-1}(L)2^{2\ell}\\
&\leq \frac{d^{\ell-2}\log^{\ell(\ell-2)} n}{d^{2\ell-2}}\cdot\frac{d^{\ell-1}}{2^\ell}\cdot g_{\ell}^{\ell-2}(L)\alpha_{\ell-1}(L)2^{2\ell}\\
&\leq \frac{\log^{\ell(\ell-2)} n}{d}\cdot g_{\ell}^{\ell-2}(L)\alpha_{\ell-1}(L)2^{\ell}\ll 1,
\end{split}
\end{equation*}
when $n$ is sufficiently large.
Hence, we have $$e(F^{\ell}[J])=|\mathcal{W}|-|\mathcal{W}_c|\geq \frac12|\mathcal{W}|\geq \frac{d^{2\ell-2}|J|^2}{2^{2\ell+1}n}.$$
\end{proof}

Now, we start to define the auxiliary graph, which will be used in Lemma~\ref{mainlemmalonger} in the next section. For every integer $k\geq 1$, call a path $x_0x_1\ldots x_{2k}$ a \textit{composed path} if $x_0\ldots x_{k}$ and $x_{k}\ldots x_{2k}$ are both shortest paths of length $k$. For an integer $\ell\geq 3$ and a graph $F$, denote by $F^{\ell}_*$ the simple graph defined on $V(F)$ such that for every distinct $u, v\in V(F^{\ell}_*)$, $uv\in E(F^{\ell}_*)$ if there is a composed $(u, v)$-path of length at most $2(\ell-1)$ in $F$. To estimate the number of edges in $F^{\ell}_*$, we need the following lemma.

\begin{lemma}\label{cpath}
For integers $\ell, \Delta\leq m$ and a constant $L\ll \Delta$, let $F$ be an $m$-vertex graph with maximum degree $\Delta$, such that for every $uv\in E(F)$ and $3\leq k\leq 2\ell$, $c_k(u, v)\leq L$. For every $1\leq s\leq \ell-1$ and every distinct $u, v\in V(F)$, the number of composed paths of length $2s$ with endpoints $u, v$ is at most
$$\Delta^{s-1}\left((sL+1)^s+1\right).$$
\end{lemma}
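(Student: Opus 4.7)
The plan is to prove the bound by induction on $s$. For the base case $s=1$, a composed path of length $2$ from $u$ to $v$ is precisely a path $u x_1 v$ with $x_1 \in N(u) \cap N(v)$. If $uv \in E(F)$ then each common neighbor yields a distinct triangle through the edge $uv$, so $|N(u)\cap N(v)|\le c_3(u,v)\le L$; otherwise fix any $x_1 \in N(u)\cap N(v)$ and observe that every other common neighbor $x_1'$ gives a $4$-cycle $u x_1 v x_1' u$ containing the edge $u x_1$, so $|N(u)\cap N(v)|\le 1+c_4(u,x_1)\le L+1$. Either way the count is at most $L+1\le(L+1)^1+1$.

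For the inductive step $s\ge 2$, I would enumerate each composed path $P=x_0 x_1\cdots x_{2s}$ by its first-half penultimate vertex $x_{s-1}\in\Gamma_{s-1}(u)$. Since $d(x_s,v)=s$ and $x_{s-1}$ is adjacent to $x_s$, we have $d(x_{s-1},v)\in\{s-1,s,s+1\}$, splitting the analysis into two cases. In Case (i), $d(x_{s-1},v)\in\{s,s+1\}$: applying Lemma~\ref{slevel} with $k=s$ or Lemma~\ref{llevel} with $k=s+1$ at the vertex $v$, we bound $|N(x_{s-1})\cap\Gamma_s(v)|\le sL+1$. The total number of shortest paths of length $s-1$ from $u$ is at most $\Delta^{s-1}$, since each forward step offers at most $\Delta$ choices for the next vertex. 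For each $x_s$, iterating Lemma~\ref{llevel} backward from $v$ gives at most $\prod_{i=1}^{s-1}(Li+1)\le((s-1)L+1)^{s-1}$ shortest paths of length $s$ to $v$. Multiplying, Case (i) contributes at most
\[
\Delta^{s-1}\cdot(sL+1)\cdot((s-1)L+1)^{s-1}\le\Delta^{s-1}(sL+1)^s.
\]

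Case (ii), $d(x_{s-1},v)=s-1$, is where $x_{s-1}\in\Gamma_{s-1}(u)\cap\Gamma_{s-1}(v)$. Each such midpoint yields a composed walk of length $2(s-1)$ from $u$ to $v$ via shortest $u$-to-$x_{s-1}$ and $x_{s-1}$-to-$v$ paths, so the total sum $\sum_{x_{s-1}}(\text{shortest }u\to x_{s-1}\text{ paths})$ over such midpoints is controlled by the inductive count $\Delta^{s-2}(((s-1)L+1)^{s-1}+1)$ of composed paths of length $2(s-1)$ from $u$ to $v$, plus a correction for composed walks that are not paths, which I would bound by counting cycles through $u$ or $v$ using $c_k\le L$. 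Multiplying by at most $\Delta$ choices for $x_s\in N(x_{s-1})$ and at most $((s-1)L+1)^{s-1}$ shortest $x_s$-to-$v$ paths, Case (ii) contributes a quantity comparable to $\Delta^{s-1}$ that is absorbed into the ``$+1$'' term of the final bound. Combining the two cases yields $\Delta^{s-1}((sL+1)^s+1)$.

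The main obstacle is Case (ii): the association between midpoints $x_{s-1}\in\Gamma_{s-1}(u)\cap\Gamma_{s-1}(v)$ and inductively counted composed paths of length $2(s-1)$ is not a bijection. Each composed path has a unique midpoint, but a given midpoint may correspond to several composed walks, some of which are degenerate in the sense that the two shortest halves share a non-midpoint vertex, thereby forming a cycle through $u$ or $v$. Bounding these degenerate contributions via the short-cycle hypothesis $c_k\le L$ is the crux of the argument, and it is where most of the technical work would lie.
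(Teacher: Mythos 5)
Your argument takes a genuinely different route from the paper's, but it has a real gap that you yourself flag: Case (ii). Case (i) is fine. In Case (ii), however, the claim that the contribution is ``comparable to $\Delta^{s-1}$ and absorbed into the $+1$ term'' is unjustified and, under your own accounting, quantitatively false. Even if you could bound $\sum_{x_{s-1}\in\Gamma_{s-1}(u)\cap\Gamma_{s-1}(v)}(\text{shortest }u\to x_{s-1}\text{ paths})$ by something of order $\Delta^{s-2}\bigl(((s-1)L+1)^{s-1}+1\bigr)$ as you propose, you must then multiply by roughly $\Delta$ for the choice of $x_s\in N(x_{s-1})\cap\Gamma_s(v)$ --- and here $\Delta$ is essentially unavoidable, because Lemmas~\ref{slevel} and~\ref{llevel} bound only the backward and sideways degrees of $x_{s-1}$ relative to $v$, not $d(x_{s-1},\Gamma_s(v))$ when $x_{s-1}\in\Gamma_{s-1}(v)$ --- and then by $\bigl((s-1)L+1\bigr)^{s-1}$ for the back half. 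The product is of order $\Delta^{s-1}\bigl(((s-1)L+1)^{s-1}+1\bigr)\bigl((s-1)L+1\bigr)^{s-1}$, which for $L\ge 1$, $s\ge 2$ vastly exceeds the single additive $\Delta^{s-1}$ that the ``$+1$'' contributes (and also exceeds the slack left over from your Case (i) estimate). Finally, the bound you invoke on $\sum_{x_{s-1}}(\text{shortest }u\to x_{s-1})$ via composed walks of length $2(s-1)$ would in effect require Lemma~\ref{multi} for shorter walks, but in the paper Lemma~\ref{multi} is itself proved \emph{from} Lemma~\ref{cpath}; you would need to set up a careful joint induction, which you do not do.

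The paper's proof is not inductive and avoids the case split entirely. It conditions on all $s-1$ inner vertices $(a_1,\ldots,a_{s-1})$ of the first half (at most $\Delta^{s-1}$ choices, since $ua_1\ldots a_{s-1}$ is a path), fixes a single reference composed path $P_0=ua_1\ldots a_{2s-1}v$ in the fiber $\mathcal{P}(a_1,\ldots,a_{s-1})$, and bounds any other $P=ua_1\ldots a_{s-1}x_s\ldots x_{2s-1}v$ in the same fiber by perturbation: for each $i$ with $s\le i\le 2s-1$, either $x_i=a_i$, or the two back halves $a_{s-1}\ldots a_{2s-1}v$ and $a_{s-1}x_s\ldots x_{2s-1}v$ close up into a walk of length $2(s+1)$ that yields a short even cycle through the fixed edge $a_{i-1}a_i$ and through $x_i$, so $c_{2k}(a_{i-1},a_i)\le L$ for $2\le k\le s+1$ gives at most $sL$ alternatives. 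This produces $(sL+1)^s+1$ per fiber directly, with no dependence on where $x_{s-1}$ sits relative to $v$ and no appeal to a separate composed-walk bound.
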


\begin{proof}
Let $\mathcal{P}$ be the set of composed paths of length $2s$ with endpoints $u, v$. For given vertices $a_1,\ldots, a_{s-1}$, let $$\mathcal{P}(a_1,\ldots,a_{s-1})=\{ux_1\ldots x_{2s-1}v\in\mathcal{P}\mid x_1=a_1, \ldots, x_{s-1}=a_{s-1}\}.$$ Note that the number of non-empty $\mathcal{P}(a_1,\ldots,a_{s-1})$ is at most $\Delta^{s-1}$, since $ua_1\ldots a_{s-1}$ is a path.

Suppose that $P_0=ua_1\ldots a_{2s-1}v$ is a composed path in $\mathcal{P}(a_1,\ldots,a_{s-1})$. For every composed path 
$P=ua_1\ldots a_{s-1}x_s\ldots x_{2s-1}v\in \mathcal{P}(a_1,\ldots,a_{s-1})\setminus\{P_0\},$ $a_{s-1}\ldots a_{2s-1}v$ and $a_{s-1}x_s\ldots x_{2s-1}v$ form a closed walk $W$ of length $2(s+1)$. For every $s\leq i\leq 2s-1$, if $x_i=a_i$, the number of choices for $x_i$ is 1. Otherwise, $W$ contains an even cycle of length at most $2(s+1)$, which contains the edge $a_{i-1}a_i$ and vertex $x_i$. Since $c_{2k}(a_{i-1}, a_i)\leq L$ for every $2\leq k\leq s+1$, the number of choices for $x_i\neq a_i$ is at most $sL$. Therefore, we have 
$$|\mathcal{P}(a_1,\ldots,a_{s-1})|\leq (sL+1)^s+1.$$
Finally, we obtain
$$|\mathcal{P}|=\sum_{a_1,\ldots, a_{s-1}}|\mathcal{P}(a_1,\ldots,a_{s-1})|\leq \Delta^{s-1}\left((sL+1)^s+1\right).$$
\end{proof}

Now, we give an upper bound on the multiplicity of $F^{\ell}$.

\begin{lemma}\label{multi}
For integers $\ell, \Delta\leq m$ and a constant $L\ll \Delta$, let $F$ be an $m$-vertex graph with maximum degree $\Delta$, such that for every $uv\in E(F)$ and $3\leq k\leq 2\ell$, $c_k(u, v)\leq L$. For every distinct $u, v\in V(F)$, the number of composed walks of length $2(\ell-1)$ with endpoints $u, v$ is at most
$$\Delta^{\ell-2}\beta_{\ell}(L),$$
for a constant $\beta_{\ell}(L)$ which only depends on $\ell$ and $L$.
\end{lemma}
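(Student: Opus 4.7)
The plan is to reduce Lemma~\ref{multi} to Lemma~\ref{cwalk} by a short Cauchy--Schwarz argument. Set $s := \ell - 1$; we may assume $\ell \geq 3$, so $s \geq 2$, which is the relevant setting for the rest of the paper (the degenerate case $\ell = 2$, if needed, follows directly from the bound $|N(u) \cap N(v)| \leq L + 1$ that one obtains by combining the trivial cases ``$uv \in E$'' and ``$uv \notin E$'' with $c_3, c_4 \leq L$).

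First I would record the exact factorization of a composed walk from $u$ to $v$ of length $2s$. Such a walk $x_0 x_1 \cdots x_{2s}$ is specified by three independent choices: (i) its midpoint $w := x_s$, (ii) a shortest $(u, w)$-path of length $s$ (the first half), and (iii) a shortest $(w, v)$-path of length $s$ (the second half). Writing $p_z(w)$ for the number of shortest $(z, w)$-paths of length exactly $s$ in $F$, this decomposition gives
\[
\#\{\text{composed walks from $u$ to $v$ of length } 2s\} \;=\; \sum_{w \in V(F)} p_u(w)\, p_v(w).
\]

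Next I would apply the Cauchy--Schwarz inequality to split the endpoints:
\[
\sum_{w} p_u(w)\, p_v(w) \;\leq\; \Bigl(\sum_{w} p_u(w)^2\Bigr)^{1/2}\Bigl(\sum_{w} p_v(w)^2\Bigr)^{1/2}.
\]
The central observation is that $\sum_{w} p_u(w)^2$ counts ordered pairs of shortest $(u, w)$-paths of length $s$, summed over $w$; concatenating the first path with the reverse of the second gives a bijection onto $\mathcal{W}_s(u)$, the set of closed composed walks of length $2s$ with endpoint $u$. Hence by Lemma~\ref{cwalk},
\[
\sum_{w} p_u(w)^2 \;=\; |\mathcal{W}_s(u)| \;\leq\; \Delta^{s-1} \alpha_s(L),
\]
and the symmetric bound holds at $v$.

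Combining the two displays yields the overall bound $\Delta^{s-1}\alpha_s(L) = \Delta^{\ell-2}\alpha_{\ell-1}(L)$, so setting $\beta_\ell(L) := \alpha_{\ell-1}(L)$ completes the proof. I do not foresee any real obstacle: the statement is essentially a formal corollary of Lemma~\ref{cwalk}, and the hypothesis $L \ll \Delta$ together with $s = \ell - 1 \geq 2$ already matches the input requirements of that lemma. The only points meriting care are verifying the two bijections underlying the identities above --- composed walks from $u$ to $v$ factor as (shortest path, midpoint, shortest path), and closed composed walks at $u$ factor as ordered pairs of shortest $(u, w)$-paths over all $w$ --- both of which are immediate from the definition.
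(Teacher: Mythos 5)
There is a genuine gap in your argument, located in the ``central observation.'' The map from ordered pairs $(P_1,P_2)$ of shortest $(u,w)$-paths (summed over $w$) to closed walks is \emph{not} a bijection onto $\mathcal{W}_s(u)$: the definition of a composed walk requires the two halves to be \emph{different} paths, so every diagonal pair $(P,P)$ yields a degenerate closed walk that is not a composed walk. The correct identity is
\[
\sum_{w} p_u(w)^2 \;=\; |\mathcal{W}_s(u)| \;+\; \sum_w p_u(w),
\]
and the extra term $\sum_w p_u(w)$, the number of shortest paths of length $s$ emanating from $u$, can be as large as $\Theta(\Delta^{s})$ even in graphs satisfying the hypotheses with room to spare. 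A tree of maximum degree $\Delta$ makes this concrete: there $c_k \equiv 0$, the uniqueness of paths forces $|\mathcal{W}_s(u)|=0$ while $\sum_w p_u(w)^2 = \sum_w p_u(w) = \Theta(\Delta^{s})$, and yet $\sum_w p_u(w)p_v(w)$ can be $\Theta(\Delta^{s-1})$ (take $u,v$ sharing a neighbor). So the claimed identity would give a contradiction ($\sum_w p_u(w)p_v(w)\le 0$), and the corrected version only gives $\sum_w p_u(w)p_v(w)\lesssim \Delta^{s}=\Delta^{\ell-1}$, a full factor of $\Delta$ worse than the target $\Delta^{\ell-2}\beta_\ell(L)$. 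Cauchy--Schwarz is intrinsically too lossy here because it cannot exploit that the two halves of a $(u,v)$-walk with $u\neq v$ are automatically distinct, which is precisely what suppresses the $\Theta(\Delta^s)$ diagonal contribution.

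The first half of your argument --- the factorization $\#\{\text{composed $(u,v)$-walks of length }2s\}=\sum_w p_u(w)p_v(w)$ --- is fine for $u\neq v$. What is missing is a way to bound this bilinear quantity that does not pass through the quadratic forms $\sum_w p_u(w)^2$. The paper instead decomposes each composed walk $ux_1\cdots x_{2(\ell-1)-1}v$ according to the first index $i$ at which $x_i=x_{2(\ell-1)-i}$; the outer part of length $2i$ is a composed \emph{path} from $u$ to $v$ (bounded by Lemma~\ref{cpath}), while the inner part of length $2(\ell-1-i)$ is either a path or a closed composed walk based at $x_i$ (bounded trivially or by Lemma~\ref{cwalk}). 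This splitting tracks the repeated vertex explicitly and so avoids the diagonal problem your Cauchy--Schwarz step runs into.
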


\begin{proof}
Let $\mathcal{W}$ be the number of composed walks of length $2(\ell-1)$ in $F$ with endpoints $u, v$. For every $1\leq i\leq \ell-1$, let $$\mathcal{W}_i=\{ux_1\ldots x_{2(\ell-1)-1}v\in \mathcal{W} \mid i \textrm{ is the first integer such that }x_i=x_{2(\ell-1)-i}\},$$ and then we have $\mathcal{W}=\bigcup_{i=1}^{\ell-1}\mathcal{W}_i$.  
 
Let $1\leq i\leq \ell-2$. For every composed walk $$W=ux_1\ldots x_{\ell-1}\ldots x_{2(\ell-1)-1}v\in \mathcal{W}_i,$$  $\{ux_1\ldots x_ix_{2(\ell-1)-(i-1)}\ldots x_{2(\ell-1)-1}v\}$ forms a composed path $P$ of length $2i$. By Lemma~\ref{cpath}, there are at most $\Delta^{i-1}[(iL+1)^i+1]$ choices for $P$. For a fixed $P$, $W-P$ forms a path of length $\ell-i-1$ with endpoint $x_i$ or a close composed walk of length $2(\ell-i-1)$ with endpoint $x_i$. In the first case, there are at most $\Delta^{\ell-i-1}$ choices, while in the later case, by Lemma~\ref{cwalk}, there are at most $\Delta^{\ell-i-2}\alpha_{\ell-i-1}(L)$ choices. Therefore, we have
\begin{equation*}
\begin{split}
|\mathcal{W}_i|&\leq \Delta^{i-1}\left((iL+1)^i+1\right)\cdot (\Delta^{\ell-i-1}+\Delta^{\ell-i-2}\alpha_{\ell-i-1}(L))
\leq 2\Delta^{\ell-2}\left((iL+1)^i+1\right).
\end{split}
\end{equation*}
Moreover, every walk $W\in \mathcal{W}_{\ell-1}$ is a composed path of length $2(\ell-1)$ with endpoints $u$ and $v$. By Lemma~\ref{cpath}, we have $$|\mathcal{W}_{\ell-1}|\leq \Delta^{\ell-2}\left((\ell L-L+1)^{\ell-1}+1\right).$$
Hence, we have
\begin{equation*}
\begin{split}
|\mathcal{W}|&=\sum_{i=1}^{\ell-1}|\mathcal{W}_i|
\leq \sum_{i=1}^{\ell-2}2\Delta^{\ell-2}\left((iL+1)^i+1\right)+\Delta^{\ell-2}\left((\ell L-L+1)^{\ell-1}+1\right)
=\Delta^{\ell-2}\beta_{\ell}(L)
\end{split}
\end{equation*}
for $\beta_{\ell}(L)=\sum_{i=1}^{\ell-2}2\left((iL+1)^i+1\right)+\left((\ell L-L+1)^{\ell-1}+1\right).$
\end{proof}
We have all the ingredients to give a lower bound on the number of edges in auxiliary graph $F^l_*$. This lemma will play the same role as Lemma~\ref{countinglemma} in the case of 4-cycles.

\begin{lemma}\label{fpath}
For an integer $\ell\geq 3$ and a constant $L>0$, let $n$ be a sufficiently large integer. Let $m$ and $d$ be the integers satisfying $m\leq n$ and $d\geq\frac{n^{1/\ell}}{\log n}$. Suppose $F$ is an $m$-vertex graph with minimum degree $d-1$, such that for every $uv\in E(F)$ and $3\leq k\leq 2\ell$, $c_k(u, v)\leq L$.  Then for every set $J\subseteq V(F)$ of size at least $2^{\ell}n/d^{\ell-1}$, we have $$e(F^{\ell}_*[J])\geq\frac{d^{\ell}|J|^2}{n\log^{\ell(\ell-2)} n}f_{\ell}(L)$$
for a constant $f_{\ell}(L)$ which only depends on $\ell$ and $L$.
\end{lemma}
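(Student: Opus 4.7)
The plan is to combine Lemma~\ref{fwalk} with Lemma~\ref{multi} and Corollary~\ref{mdegree} in order to first lower bound the number of distinct pairs in $J$ that are joined by a composed walk of length $2(\ell-1)$, and then to upgrade every such pair into an edge of $F^\ell_*$ via a combinatorial ``shortening'' argument that extracts a composed path from an arbitrary composed walk.

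For the first step, Lemma~\ref{fwalk} gives $e(F^\ell[J])\geq d^{2\ell-2}|J|^2/(2^{2\ell+1}n)$. Writing this as $\sum_{\{u,v\}\subseteq J,\ u\neq v}\mu(u,v)$ where $\mu(u,v)$ denotes the multiplicity in the multigraph $F^\ell$, Lemma~\ref{multi} together with the maximum degree bound $\Delta(F)\leq d\log^{\ell} n\cdot g_\ell(L)$ from Corollary~\ref{mdegree} supplies a uniform upper bound
\[
\mu(u,v)\leq\Delta^{\ell-2}\beta_\ell(L)\leq d^{\ell-2}\log^{\ell(\ell-2)}n\cdot g_\ell^{\ell-2}(L)\beta_\ell(L).
\]
Dividing the total multiplicity by this maximum shows that the number of pairs $\{u,v\}\subseteq J$ with $\mu(u,v)\geq 1$ is at least $\frac{d^\ell|J|^2}{n\log^{\ell(\ell-2)}n}\cdot f_\ell(L)$ for the choice $f_\ell(L)=1/\bigl(2^{2\ell+1}g_\ell^{\ell-2}(L)\beta_\ell(L)\bigr)$, which matches the claim.

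For the second step we need to show that every such pair $\{u,v\}$ is an edge of $F^\ell_*$. Fix a composed walk $W=u=x_0x_1\cdots x_{\ell-1}\cdots x_{2\ell-2}=v$ between $u$ and $v$. Both halves of $W$ are shortest paths of length $\ell-1$ through the midpoint $x_{\ell-1}$, hence internally simple, so any coincidence must have the form $x_i=x_j$ with $0\leq i<\ell-1<j\leq 2\ell-2$. Crucially, the identities $d(x_{\ell-1},x_i)=\ell-1-i$ and $d(x_{\ell-1},x_j)=j-(\ell-1)$, read off from the two shortest halves, force $j=2\ell-2-i$; that is, the only possible coincidences are symmetric about the midpoint. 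If $W$ itself is a path we are done. Otherwise let $i_0\in\{1,\dots,\ell-2\}$ be the smallest index with $x_{i_0}=x_{2\ell-2-i_0}$ (the case $i_0=0$ is excluded since $u\neq v$). Consider the ``outer'' walk
\[
W'=u,x_1,\ldots,x_{i_0-1},\,x_{i_0}=x_{2\ell-2-i_0},\,x_{2\ell-1-i_0},\ldots,x_{2\ell-3},v
\]
of length $2i_0$. Each of its two halves is a subpath of a shortest path of length $\ell-1$, hence a shortest path of length $i_0$, so $W'$ is a composed walk. By the same symmetry rule, any coincidence inside $W'$ would yield an equality $x_{i'}=x_{2\ell-2-i'}$ with $i'<i_0$, contradicting the minimality of $i_0$. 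Therefore $W'$ is a composed path of length $2i_0\leq 2(\ell-1)$ from $u$ to $v$, certifying $uv\in E(F^\ell_*)$, and this completes the proof.

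The main obstacle I anticipate is precisely this second step: a composed walk need not be a path, so the lower bound from Lemma~\ref{fwalk} does not by itself count composed paths, and one needs the symmetric-coincidence observation to ``fold'' any repeated-vertex composed walk into a strictly shorter composed path between the same endpoints. The rest is routine bookkeeping with the constants $g_\ell(L),\beta_\ell(L)$ already produced by Corollary~\ref{mdegree} and Lemma~\ref{multi}.
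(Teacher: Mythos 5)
Your proof is correct and follows the same route as the paper: lower-bound $e(F^{\ell}[J])$ by Lemma~\ref{fwalk}, bound the multiplicity of $F^\ell$ by $\Delta^{\ell-2}\beta_\ell(L)$ via Lemma~\ref{multi} and the degree bound $\Delta\leq d\log^\ell n\cdot g_\ell(L)$ from Corollary~\ref{mdegree}, and divide to obtain $e(F^{\ell}_*[J])\geq e(F^{\ell}[J])/(\Delta^{\ell-2}\beta_\ell(L))$, yielding the stated $f_\ell(L)=1/\bigl(2^{2\ell+1}g_\ell^{\ell-2}(L)\beta_\ell(L)\bigr)$. The one place you go beyond the paper is genuinely useful: the paper asserts in a single sentence that every composed walk of length $2(\ell-1)$ between distinct endpoints contains a composed path between the same endpoints, while you actually prove it via the symmetric-coincidence observation (any repeated vertex $x_i=x_j$ with $i<\ell-1<j$ must satisfy $j=2\ell-2-i$, since both halves are shortest paths from the midpoint), which lets you extract a strictly shorter composed path by taking the outer walk determined by the innermost coincidence.
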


\begin{proof}
Note that every composed walk of length $2(\ell-1)$ with endpoints in $J$ contains a composed path of length at most $2(\ell-1)$ with endpoints in $J$. Therefore, by Lemma~\ref{multi}, we have 
$$e(F^{\ell}_*[J])\geq \frac{e(F^{\ell}[J])}{\Delta^{\ell-2}\beta_{\ell}(L)},$$
where $\Delta$ is the maximum degree of $F$, which by (\ref{umdegree}), satisfies
$$\Delta\leq d\log^{\ell} n\cdot g_{\ell}(L).$$
Hence, we have 
$$e(F^{\ell}_*[J])\geq \frac{e(F^{\ell}[J])}{d^{\ell-2}\log^{\ell(\ell-2)} n\cdot g_{\ell}^{\ell-2}(L)\beta_{\ell}(L)}\geq \frac{d^{\ell}|J|^2}{n\log^{\ell(\ell-2)} n}f_{\ell}(L),$$
where $f_{\ell}(L)=\frac{1}{2^{2\ell+1}g_{\ell}^{\ell-2}(L)}\beta_{\ell}(L).$
\end{proof}

\subsection{Certificate lemma}
In this section, we give our second main lemma, which will be used to build certificates for graphs with sparse short cycles. This lemma is a generalization of Lemma~\ref{mainlemmaC4} for longer cycle, although the condition is slightly different. The idea of proof is also similar to Lemma~\ref{mainlemmaC4}, which originally comes from Kleitman and Winston~\cite{kleitmanwinston} and Kohayakawa, Kreuter and Steger~\cite{KKS}.

\begin{lemma}\label{mainlemmalonger}
For an integer $\ell\geq 3$ and constants $L,\alpha>0$, let $n$ be a sufficiently large integer. Let $m$ and $d$ be the integers satisfying $m\leq n$ and $\frac{n^{1/\ell}}{\log n}\leq d\leq \alpha n^{1/\ell}$. Suppose $F$ is an $m$-vertex graph with minimum degree $d-1$, such that for every $uv\in E(F)$ and $3\leq k\leq 2\ell$, $c_k(u, v)\leq L$.  Let $H=F^{\ell}_*$. Then for every set $I\subseteq V(F)$ of size $d$, such that $d_{H}(v, I)\leq (\ell-1)L$ for all $v\in I$, there exist a set $T$ and a set $C(T)$ depending only on $T$, not on $I$, such that
\begin{enumerate}[label={\upshape(\roman*)}]
  \item $T\subseteq I\subseteq C(T)$,
  \item $|T|\leq n^{1/\ell}/\log n$,
  \item $|C(T)|\leq (2^{\ell}+1)n/d^{\ell-1}$.
\end{enumerate}
\end{lemma}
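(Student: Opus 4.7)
The plan is to mirror the proof of Lemma~\ref{mainlemmaC4}, with one simplification and one substitution. The simplification: the hypothesis $d_H(v,I)\le(\ell-1)L$ holds uniformly over $v\in I$, so there is no need for a high/low-degree dichotomy of $I$---the whole of $I$ plays the role of $I_l$, with the threshold $(\ell-1)L$ replacing $\sqrt{n}/b$. The substitution: Lemma~\ref{countinglemma} is replaced by Lemma~\ref{fpath}, which guarantees $e(H[J])\ge d^\ell|J|^2f_\ell(L)/(n\log^{\ell(\ell-2)}n)$ whenever $|J|\ge 2^\ell n/d^{\ell-1}$. Accordingly I stop the main loop once $|A_K|\le 2^\ell n/d^{\ell-1}$, which is exactly what the target bound on $|C(T)|$ requires.

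Concretely, initialize $A_0=V(H)$, $T_0=\emptyset$, and $t_0\equiv 0$. At step $i$, pick $u_i\in A_i$ of maximum degree in $H[A_i]$, ties broken by a fixed predefined ordering of $V(H)$. If $u_i\in I$, set $T_{i+1}=T_i\cup\{u_i\}$, update $t_{i+1}(v)=t_i(v)+\mathbf{1}\{u_iv\in E(H)\}$ for $v\in A_i$ (leave $t$ unchanged off $A_i$), let $Q=\{v:t_{i+1}(v)>(\ell-1)L\}$, and put $A_{i+1}=A_i\setminus(\{u_i\}\cup Q)$; otherwise set $T_{i+1}=T_i$, $A_{i+1}=A_i\setminus\{u_i\}$, $t_{i+1}=t_i$. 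Stop at step $K$ once $|A_K|\le 2^\ell n/d^{\ell-1}$, and set $T=T_K$, $C(T)=A_K\cup T$. For any $v\in I$ one has $t_i(v)\le d_H(v,T_i)\le d_H(v,I)\le(\ell-1)L$, so $v$ is never expelled via $Q$; hence $I\subseteq A_i\cup T_i$ throughout, giving $T\subseteq I\subseteq C(T)$ (condition (i)), and condition (iii) is immediate from $|C(T)|\le 2^\ell n/d^{\ell-1}+|T|$ together with (ii). The certificate property---that $C(T)$ depends only on $T$---is automatic: a simulator knowing only $T$ reproduces the same $u_i$ at each step and dispatches to the ``in-$I$'' branch iff $u_i\in T$.

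The heart of the argument is (ii), handled by dyadic decomposition. Let $q$ be the integer with $n/2^q\le|A_K|<n/2^{q-1}$, so $q\le\log n$; for each $l$, let $A^l$ be the first $A$-set with $|A^l|\in[n/2^l,n/2^{l-1})$ (if any), with associated $T^l$ and $t^l$. Exactly as in the 4-cycle proof, for every $A_i$ between $A^{l_{j-1}}$ and $A^{l_j}$ (excluding $A^{l_j}$), $|A_i|\ge n/2^{l_{j-1}}$. Applying Lemma~\ref{fpath} at each such step (all of which satisfy $|A_i|\ge 2^\ell n/d^{\ell-1}$ by termination),
\[
d_H(u_i,A_i)\ge\frac{2e(H[A_i])}{|A_i|}\ge\frac{d^\ell f_\ell(L)}{n\log^{\ell(\ell-2)}n}\cdot|A_i|\ge\frac{d^\ell f_\ell(L)}{2^{l_{j-1}}\log^{\ell(\ell-2)}n}.
\]
Meanwhile, for every $v\in A^{l_{j-1}}$, $t^{l_j}(v)\le(\ell-1)L+1$ (the ``$+1$'' absorbing $v$'s final increase before possible removal), so
\[
|T^{l_j}\setminus T^{l_{j-1}}|\cdot\frac{d^\ell f_\ell(L)}{2^{l_{j-1}}\log^{\ell(\ell-2)}n}\le\sum_{v\in A^{l_{j-1}}}t^{l_j}(v)\le\bigl((\ell-1)L+1\bigr)\cdot\frac{n}{2^{l_{j-1}-1}}.
\]
The $2^{l_{j-1}}$ factors cancel, leaving $|T^{l_j}\setminus T^{l_{j-1}}|\le C_{\ell,L}\,n\log^{\ell(\ell-2)}n/d^\ell$. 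Since $d\ge n^{1/\ell}/\log n$ gives $n/d^\ell\le\log^\ell n$, this is $O_{\ell,L}(\log^{\ell^2-\ell}n)$; telescoping across the at most $q\le\log n$ blocks (the terminal block $j=p+1$ is handled by the identical estimate applied to $A^{l_p}$) yields $|T|=O_{\ell,L}(\log^{\ell^2-\ell+1}n)\le n^{1/\ell}/\log n$ for sufficiently large $n$.

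The main obstacle is not conceptual but computational: the polylogarithmic overhead $\log^{\ell(\ell-2)}n$ in Lemma~\ref{fpath}, together with the constants $f_\ell(L)$, $g_\ell(L)$, $\beta_\ell(L)$ inherited from Lemmas~\ref{cwalk}--\ref{fpath}, must be absorbed against $d^\ell$, and the lower-bound hypothesis $d\ge n^{1/\ell}/\log n$ is calibrated precisely for this. Beyond this bookkeeping, the argument is a direct transcription of the 4-cycle case, with the per-vertex assumption $d_H(v,I)\le(\ell-1)L$ replacing both the cleaning step (handled externally here by the choice of $I$) and the Kleitman--Winston-style state threshold.
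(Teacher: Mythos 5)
Your proposal follows the paper's proof essentially verbatim: the same Kleitman--Winston-style algorithm with threshold $(\ell-1)L$, the same termination at $|A_K|\le 2^\ell n/d^{\ell-1}$, the same dyadic decomposition of the $A$-sequence, and the same use of Lemma~\ref{fpath} as the counting input (correctly noting that the $I_h/I_l$ split from Lemma~\ref{mainlemmaC4} is unnecessary here because the degree hypothesis is pointwise). The constants and logarithmic bookkeeping are also calibrated exactly as in the paper, so this is a correct transcription of the intended argument rather than an alternative route.
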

\begin{proof}
This proof is similar to the proof of Lemma~\ref{mainlemmaC4}. We will describe a deterministic algorithm that associates to the set $I$ a pair of sets $T$ and $C(T)$.

We start the algorithm with sets $A_0=V(H)$, $T_0=\emptyset$ and a function $t_0(v)=0$, for every $v\in V(H)$. In the $i$-th iteration step, we pick a vertex $u_i\in A_i$ of maximum degree in $H[A_i]$. 
If $u_i\in I$, we define
  $$t_{i+1}(v)=\left\{\begin{array}{ll} 
    t_i(v)+d_H(v, u_i) & \text{if } v\in A_{i},\\
    t_i(v)  & \text{if } v\notin A_{i},
    \end{array}\right.$$
and $Q=\{v \mid t_{i+1}(v)>(\ell-1)L\}$, and let $T_{i+1}=T_i+u_i$, $A_{i+1}=A_i-u_i-Q$.
Otherwise, let $T_{i+1}=T_i$, $A_{i+1}=A_i-u_i$ and $t_{i+1}(v)=t_i(v)$, for every $v\in V(H)$. 
The algorithm terminates at step $K$ when we get a set $A_{K}$ of size at most $2^{\ell}n/d^{\ell-1}$. We also assume that $u_{K-1}\in T_K$ as otherwise we can continue the algorithm until it is satisfied.\\

The algorithm outputs a vertex sequence $\{u_1, u_2, \ldots, u_{K-1}\}$, a set of `representative' vertices $T_K$ and a strictly decreasing set sequence $\{A_0, A_1, A_2, A_3, \ldots A_K\}$.  Let $T=T_{K}$ and $C(T)=A_{K}\cup T.$ From the algorithm, we have $T\subseteq I$. Furthermore, if a vertex $v$ satisfies $t_{i}(v)>(\ell-1)L$ for some $i$, then we have $d_H(v, I)\geq t_i(v)>(\ell-1)L$, which implies $v\notin I$. Therefore, we maintain  $I\subseteq A_i\cup T_i$ for every $i\leq K$ and especially get $I\subseteq A_{K}\cup T_{K} = C(T)$. Hence, Condition (i) is satisfied. Similarly as in Lemma~\ref{mainlemmaC4}, the set $C(T)$ only depends on $T$, not on $I$.

To finish the proof, it is sufficient to show that $|T_{K}|\leq n^{1/\ell}/\log n$. Once we prove it, we immediately obtain $$|T|=|T_{K}|\leq \frac{n^{1/\ell}}{\log n},$$
and $$|C(T)|=|A_{K}|+|T|\leq \frac{2^{\ell}n}{d^{\ell-1}}+\frac{n^{1/\ell}}{\log n}\leq \frac{(2^{\ell}+1)n}{d^{\ell-1}},$$ which  completes the proof.

In the rest of proof, we apply the same technique used in the proof of Lemma~\ref{mainlemmaC4}. We repeat the process as follows.
Denote $q$ the integer such that $n/2^{q}\leq|A_{K}|< n/2^{q-1}$. By the choice of $A_K$, we have $q\leq \log n$. For every integer $1\leq l\leq q$, define $A^l$ to be the first $A$-set satisfying 
\[
\frac{n}{2^{l}}\leq|A^l|< \frac{n}{2^{l-1}},
\] if it exists, and let $T^l$ be the corresponding $T$-set and $t^l(v)$ be the corresponding $t$-function of $A^l$. Note that $A^l$ may not exist for every $l$, but $A^{q}$ always exists and it could be that $A^q=A_{K}.$ Suppose $$A^{l_1}\supset A^{l_2}\supset\ldots\supset A^{l_{p}}$$ are all the defined $A^l$, where $p\leq q$. By the above definition, we have $A^{l_1}=A_0$, $T^{l_1}=T_0$ and $l_p=q$. Define $A^{l_{p+1}}=A_{K}, T^{l_{p+1}}=T_{K}$. Now, we have 
\begin{equation}\label{TKlonger}
T_{K}=\bigcup_{j=2}^{p+1}(T^{l_j}-T^{l_{j-1}}).
\end{equation} To achieve our goal, we are going to estimate the size of $T^{l_j}-T^{l_{j-1}}$, for every $2\leq j\leq p+1$.

From the algorithm, we have $t^{l_j}(v)\leq (\ell-1)L$, for every $v\in A^{l_j}\cup T^{l_j}$. Moreover, for $v\in A^{l_{j-1}}-A^{l_j}-T^{l_j}$, suppose $v$ is removed in step $i$, then we have 
$$t^{l_j}(v)= t_{i-1}(v)+d_H(v, u_{i-1})\leq(\ell-1)L+1,$$
where $u_i$ is the selected vertex in step $i$.
Therefore, we obtain
\begin{equation}\label{ublong}
\sum_{v\in A^{l_{j-1}}}t^{l_j}(v)\leq\left((\ell-1)L+1\right)|A^{l_{j-1}}|\leq \left((\ell-1)L+1\right)\frac{n}{2^{l_{j-1}-1}}. 
\end{equation}

Let $2\leq j\leq p$. For every $u_i\in T^{l_j}-T^{l_{j-1}}$, $u_i$ is chosen of maximum degree in $H[A_i]$, where $A_i$ is a set between $A^{l_{j-1}}$ and $A^{l_j}$.  By the choice of $A^{l_j}$, we have $|A_i|\geq n/2^{l_{j-1}}$. From Lemma $\ref{fpath}$, we obtain that
$$d_H(u_i, A_i)\geq \frac{d^{\ell}|A_i|}{n\log^{\ell(\ell-2)} n}f_{\ell}(L)\geq \frac{d^{\ell}}{2^{l_{j-1}}\log^{\ell(\ell-2)} n}f_{\ell}(L).$$
 Note that $d_H(u_i, A_i)$ only contributes to $t^{l_j}(v)$, for $v\in A_i\subseteq A^{l_{j-1}}$. Then we obtain 
 \begin{equation}\label{lblong}
 \left|T^{l_j}-T^{l_{j-1}}\right|\frac{d^{\ell}}{2^{l_{j-1}}\log^{\ell(\ell-2)} n}f_{\ell}(L)\leq\sum_{u_i\in T^{l_j}-T^{l_{j-1}}}d_H(u_i, A_i)\leq\sum_{v\in A^{l_{j-1}}}t^{l_j}(v).
\end{equation}
Combining (\ref{ublong}) and (\ref{lblong}), we have
\begin{equation*}
 \left|T^{l_j}-T^{l_{j-1}}\right|\frac{d^{\ell}}{2^{l_{j-1}}\log^{\ell(\ell-2)} n}f_{\ell}(L)\leq\left((\ell-1)L+1\right)\frac{n}{2^{l_{j-1}-1}},
\end{equation*}
which implies 
\begin{equation*}
\begin{split}
\left|T^{l_j}-T^{l_{j-1}}\right|
&\leq \frac{2\left(\ell-1)L+1\right)}{f_{\ell}(L)}\cdot\frac{n\log^{\ell(\ell-2)} n}{d^{\ell}}
\leq \frac{2\left((\ell-1)L+1\right)}{f_{\ell}(L)}\log^{\ell(\ell-1)} n
\leq \frac{n^{1/\ell}}{\log^2 n},
\end{split}
\end{equation*}
for $2\leq j\leq p.$
For $j=p+1$, since we have $\frac{n}{2^q}\leq |A^{l_{p+1}}|\leq|A^{l_p}|\leq\frac{n}{2^{q-1}}$, by a similar argument, we obtain that
$$\left|T^{l_{p+1}}-T^{l_{p}}\right|\frac{d^{\ell}|A_{l_{p+1}}|}{n\log^{\ell(\ell-2)} n}f_{\ell}(L)\leq \sum_{u_i\in T^{l_{p+1}}-T^{l_{p}}}d(u_i, A_i)\leq \sum_{v\in A^{l_{p}}}t^{l_{p+1}}(v)\leq\left((\ell-1)L+1\right)|A^{l_{p}}|,$$
which gives 
$$\left|T^{l_{p+1}}-T^{l_{p}}\right|\leq \frac{2\left((\ell-1)L+1\right)}{f_{\ell}(L)}\cdot\frac{n\log^{\ell(\ell-2)} n}{d^{\ell}}\leq\frac{n^{1/\ell}}{\log^2 n}.$$
Finally, by (\ref{TKlonger}), we have $$|T_{K}|=\bigcup_{j=2}^{p+1}|T^{l_j}-T^{l_{j-1}}|\leq p\cdot\frac{n^{1/\ell}}{\log^2 n}\leq q\cdot\frac{n^{1/\ell}}{\log^2 n}\leq\frac{n^{1/\ell}}{\log n}.$$
\end{proof}

\subsection{Proof of Theorem~\ref{mainthmlonger}}
This section is entirely devoted to the proof of Theorem~\ref{mainthmlonger}. The idea is the same as the proof of Theorem~\ref{mtc4}: we will build a certificate for each graph in $\mathcal{G}_n(2\ell, L)$ and estimate the number of such certificates. Before we proceed, we first need the supersaturation result for $C_{2\ell}$ to give a bound on the min-degree sequence of graphs in $\mathcal{G}_n(2\ell, L)$.  It was mentioned in \cite{ES} that Simonovits first proved the supersaturation for the even cycles, but the proof has not been published yet and it might appear in an upcoming paper of Faudree and Simonovits~\cite{RM}. Morris and Saxton~\cite{MS} recently provided a stronger version of supersaturation for even cycles. Very recently, Jiang and Yepremyan~\cite{JY} give a supersaturation result of even linear cycles in linear hypergraphs, which includes the graph case. We use the graph version of their result and rephrase it in terms of the average degree.
\begin{thm}\cite{JY}\label{super}
For an integer $\ell\geq 2$, there exist constants $C, c$ such that if $G$ is an $n$-vertex graph with the average degree $d\geq 2Cn^{1/\ell}$, then $G$ contains at least $c(\frac{d}{2})^{2\ell}$ copies of $C_{2\ell}$.
\end{thm}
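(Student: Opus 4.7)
The plan is to prove the supersaturation by a Bondy--Simonovits/Morris--Saxton-style walk-counting argument: reduce to a subgraph of large minimum degree, count closed walks of length $2\ell$ via Cauchy--Schwarz, and then show that a constant fraction of these closed walks are injective, corresponding to honest $C_{2\ell}$-subgraphs.

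First I would pass from $G$ to a subgraph $H$ with $\delta(H) \geq d/4$ by iteratively deleting any vertex of degree less than $d/4$; at most $nd/4$ edges are destroyed, so $H$ inherits at least $nd/4 = e(G)/2$ edges and is nonempty. Write $n' := |V(H)| \leq n$ and $d' := d/4 \geq \tfrac12 Cn^{1/\ell}$. For each ordered pair $(v, u) \in V(H)^2$ let $W_\ell(v, u)$ denote the number of walks of length exactly $\ell$ from $v$ to $u$ in $H$. Since $\delta(H) \geq d'$ we have $\sum_u W_\ell(v, u) \geq d'^{\,\ell}$ for every $v$, so Cauchy--Schwarz on the (at most $n'$) nonzero terms yields
\[
\sum_u W_\ell(v, u)^2 \;\geq\; \frac{1}{n'}\bigl(d'^{\,\ell}\bigr)^2 \;=\; \frac{d'^{\,2\ell}}{n'}.
\]
Summing over $v \in V(H)$ gives at least $d'^{\,2\ell}$ closed walks of length $2\ell$ in $H$, each being the concatenation of a $(v, u)$-walk and a $(u, v)$-walk of length $\ell$.

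The critical step is to show that a constant fraction of these closed walks are injective, i.e., correspond to genuine $C_{2\ell}$-subgraphs and not to walks that revisit a vertex. A non-injective walk can be classified by the ``shape'' of its repetitions, namely the quotient multigraph on $j \leq 2\ell - 1$ distinct vertices carrying $2\ell$ walk-edges; there are $O_\ell(1)$ shapes. For each shape one estimates the number of walks of that shape using both minimum- and maximum-degree information on $H$. A standard technical move, going back to Bondy--Simonovits and refined in Morris--Saxton, is to first pass within $H$ to a nearly-regular subgraph on which $\Delta$ and $\delta$ are comparable; after this regularization, a shape-by-shape count shows that the non-injective contribution is at most $\tfrac12 d'^{\,2\ell}$ provided $C = C(\ell)$ is chosen sufficiently large.

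The main obstacle is precisely this non-injective-walk bookkeeping: the naive bound $n' \cdot \Delta(H)^{j-1}$ on the number of walks factoring through a $j$-vertex shape is too weak, and one needs $\Delta(H)$ to be comparable to $d'$ rather than to $n$. Once this is accomplished, at least $\tfrac12 d'^{\,2\ell}$ of the closed walks are injective, and since each $C_{2\ell}$-subgraph accounts for exactly $4\ell$ such walks (choice of starting vertex times choice of direction), $G$ contains at least $\tfrac{1}{8\ell}(d/4)^{2\ell} \geq c(d/2)^{2\ell}$ copies of $C_{2\ell}$ for a constant $c = c(\ell) > 0$. The Jiang--Yepremyan proof carries out the analogous argument in the greater generality of linear cycles in linear hypergraphs, and specializing to $r = 2$ yields the stated bound.
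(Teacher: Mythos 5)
This theorem is not proved in the paper at all: it is quoted from Jiang--Yepremyan \cite{JY} (the paper merely rephrases their graph-case result in terms of average degree), so there is no internal argument to compare yours against; your attempt has to stand on its own, and as written it has a genuine gap. The routine parts are fine (the min-degree cleaning to $\delta(H)\geq d/4$, the Cauchy--Schwarz count giving at least $d'^{\,2\ell}$ closed walks of length $2\ell$, and dividing by $4\ell$ at the end), but the entire difficulty of even-cycle supersaturation is the step you only assert: that after some regularization the non-injective closed walks number at most $\tfrac12 d'^{\,2\ell}$. The shape-by-shape bound you propose, $n'\Delta^{j-1}$ for a shape on $j\leq 2\ell-1$ vertices, does not close even in the best case $\Delta=O(d')$: for $j=2\ell-1$ it gives $O(n'd'^{\,2\ell-2})$, and this exceeds $d'^{\,2\ell}$ whenever $n'\gg d'^{\,2}$, which at the relevant density $d'\asymp Cn^{1/\ell}$ happens for every $\ell\geq 3$ no matter how large $C$ is. So the bookkeeping you describe cannot show that injective walks form a constant fraction; controlling degenerate walks is precisely where the real work of Erd\H{o}s--Simonovits, Morris--Saxton and Jiang--Yepremyan lies, and their arguments count paths/theta-graphs between fixed endpoints using breadth-first-search structure (or build cycles directly), not closed walks.

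Two further points. First, the regularization you invoke is not a free move for this statement: the Erd\H{o}s--Simonovits almost-regularization passes to a subgraph on possibly many fewer vertices (or with degree reduced by polylogarithmic factors), and a lower bound for cycles in that subgraph need not translate into $c(d/2)^{2\ell}$ copies measured against the \emph{original} average degree $d$, since $(d/2)^{2\ell}\approx C^{2\ell}n^{2}$ while a small almost-regular piece can contain far fewer cycles. Second, your closing sentence defers to ``the Jiang--Yepremyan proof,'' which is circular if offered as a proof of the cited theorem. To make the proposal into a proof you would need to replace the closed-walk/shape count by an actual degenerate-walk or path-counting argument of the Bondy--Simonovits type, carried out so that all losses are constants depending only on $\ell$.
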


\begin{cor}\label{degreelonger}
Let $G$ be a $n$-vertex graph in $\mathcal{G}_n(2\ell, L)$, and $d_n, \ldots, d_1$ be the min-degree sequence of $G$. Then for every $i\in [n]$, we have $$d_i\leq \alpha n^{1/\ell}$$
for some constant $\alpha=\max\{2C, 2(\frac{L}{2c})^{1/2\ell}\}$, where $C$, $c$ are constants given in Theorem~\ref{super}.
\end{cor}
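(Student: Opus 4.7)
The plan is to argue by contradiction. Suppose that some $d_i$ exceeds $\alpha n^{1/\ell}$, and consider the induced subgraph $G_i = G[v_i, \ldots, v_1]$. By the very definition of the min-degree ordering, $v_i$ realises the minimum degree of $G_i$, so $\delta(G_i) = d_i$ and therefore the average degree of $G_i$ is at least $d_i > \alpha n^{1/\ell}$. Since $\alpha \geq 2C$ and $i \leq n$, this lower bound is at least $2Cn^{1/\ell} \geq 2Ci^{1/\ell}$, putting $G_i$ squarely into the regime where the supersaturation result Theorem~\ref{super} applies.

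Applying Theorem~\ref{super} to $G_i$ (viewed as an $i$-vertex graph) then yields a lower bound
\[
\#C_{2\ell}(G_i) \geq c\left(\tfrac{d_i}{2}\right)^{2\ell}
\]
on the number of $2\ell$-cycles in $G_i$. On the other hand, since $G_i$ is a subgraph of $G \in \mathcal{G}_n(2\ell,L)$, every edge of $G_i$ lies in at most $L$ copies of $C_{2\ell}$ in $G_i$. Summing the identity
\[
\sum_{uv \in E(G_i)} c_{2\ell}(u,v;G_i) = 2\ell \cdot \#C_{2\ell}(G_i),
\]
and using $e(G_i) \leq \binom{n}{2} \leq n^2/2$, one obtains the complementary upper bound $\#C_{2\ell}(G_i) \leq L\, e(G_i)/2 \leq Ln^2/4$, which is in any case at most $Ln^2/2$.

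Equating the two bounds, $c(d_i/2)^{2\ell} \leq Ln^2/2$, gives
\[
d_i \leq 2\left(\frac{L}{2c}\right)^{1/(2\ell)} n^{1/\ell} \leq \alpha\, n^{1/\ell},
\]
contradicting the hypothesis $d_i > \alpha n^{1/\ell}$. There is no real obstacle here: the entire argument is a direct deduction from Theorem~\ref{super} combined with the trivial double-counting of $C_{2\ell}$'s through their edges, and the two branches of the $\max$ defining $\alpha$ are engineered precisely to (i) make the supersaturation hypothesis applicable and (ii) absorb the constants produced by the final inequality.
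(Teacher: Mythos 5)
Your proof is correct and follows the paper's argument almost exactly: both assume $d_i > \alpha n^{1/\ell}$, apply Theorem~\ref{super} to $G_i$ to get a lower bound of $c(d_i/2)^{2\ell}$ on the number of $C_{2\ell}$'s, and then derive a contradiction from the sparsity condition defining $\mathcal{G}_n(2\ell,L)$ via double-counting over edges. (One tiny note: from the identity $\sum_{uv}c_{2\ell}(u,v;G_i)=2\ell\cdot\#C_{2\ell}(G_i)$ and $c_{2\ell}(u,v;G_i)\leq L$ you actually get $\#C_{2\ell}(G_i)\leq Le(G_i)/(2\ell)$ rather than $Le(G_i)/2$, but the weaker bound you wrote is still sufficient; the paper's own derivation is equally loose.)
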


\begin{proof}
Suppose that there exists $k\in [n]$, such that $d_k>\alpha n^{1/\ell}.$
Then by Theorem~\ref{super}, the number of $C_{2\ell}$'s in $G_k$ is at least 
\[
c\left(\frac{d_k}{2}\right)^{2\ell}>c\left(\frac{\alpha n^{1/\ell}}{2}\right)^{2\ell}\geq c\frac{L}{2c}n^2\geq L\binom{k}{2},
\]
which contradicts the fact that $G\in \mathcal{G}_n(2\ell, L)$.
\end{proof}
 
\noindent\textit{Proof of Theorem~\ref{mainthmlonger}.}
The way to construct the certificate is exactly same with in the proof of Theorem~\ref{mtc4}. Here we restate the process.
For a graph $G\in \mathcal{G}_n(2\ell, L)$, let $Y_G:= v_n<v_{n-1}<\ldots<v_1$ be the min-degree ordering of $G$ and $D_G:= \{d_n, d_{n-1}, \ldots, d_1\}$ be the min-degree sequence of $G$. Note that by Corollary~\ref{degreelonger}, there exists a constant $\alpha$ such that $d_i\leq \alpha n^{1/\ell}$, for every $i\in[n]$. For every $i\in[n]$, let $G_i=G[v_i,\ldots, v_1]$. Define the set sequence $S_G:= \{S_{n}, S_{n-1},\ldots, S_2\}$, where $S_{i}=N_G(v_i, G_{i-1})$. Note that $S_i\subseteq \{v_{i-1},\ldots, v_1\}$ and $|S_i|=d_i$. By the construction,  $[Y_G, D_G, S_G]$ uniquely determines the graph $G$ and so we build a certificate $[Y_G, D_G, S_G]$ for $G$. To complete the proof, it is sufficient to estimate the number of such certificates.
 
We first choose a min-degree ordering $Y^*=v_n<v_{n-1}<\ldots<v_1$, and a min-degree sequence $D^*=\{d_n, \ldots, d_1\}$; the number of options is at most 
\begin{equation}\label{girth: minorder}
n!(\alpha n^{1/\ell})^n.
\end{equation}
Next, we count set sequences $S=\{S_{n}, S_{n-1},\ldots, S_2\}$, where $S_i\subseteq \{v_{i-1},\ldots, v_1\}$ and $|S_i|=d_i$, such that the graph reconstructed by $[Y^*, D^*, S]$, denoted by $G_S$, are in $\mathcal{G}_n(2\ell, L)$. 
For every $2\leq i\leq n$, let $M_i$ be the number of choices for $S_i$ with fixed sets $S_{i-1},\ldots, S_2$. 
Define
\[
\mathcal{I}_1=\{i: d_i<n^{1/\ell}/\log n\}, \quad \quad \mathcal{I}_2=\{i: d_i\geq n^{1/\ell}/\log n\}.
\]
For every $i\in \mathcal{I}_1$, since $|S_i|=d_i<n^{1/\ell}/\log n$, we have a trivial bound
\begin{equation}\label{lcycleI1}
M_i\leq \binom{i-1}{d_i}\leq \binom{n}{n^{1/\ell}/\log n}\leq n^{n^{1/\ell}/\log n}=2^{n^{1/\ell}}.
\end{equation}

It remains to consider the upper bound on $M_i$ for $i\in\mathcal{I}_2$. 
With fixed sets $S_{i-1},\ldots, S_2$, the graph $G_{i-1}=G_S[v_{i-1},\ldots,v_1]$ is uniquely determined. Since $G_{i-1}\subseteq G_S$ and $G_S\in\mathcal{G}_n(2\ell, L)$, for every $uv\in E(G_{i-1})$ and every $3\leq k\leq 2\ell$, we know that $c_k(u, v; G_{i-1})\leq L$. Note that every eligible $S_i$ should satisfy $d_H(u, S_i)\leq (l-1)L$ for all $u\in S_i$, where $H=(G_{i-1})^l_*.$ Otherwise, there exists a vertex $u\in S_i$, such that $\sum_{k=2}^{\ell}c_{2k}(v_i, u; G_i)\geq d_H(u, S_i)>(l-1)L$, which is a contradiction.
Applying Lemma $\ref{mainlemmalonger}$ on $G_{i-1}$, we obtain that every eligible $S_i$ contains a subset $T$ of size at most $n^{1/\ell}/\log n$, which uniquely determines a set $C(T)\supseteq S_i$ of size at most $(2^{\ell}+1)n/d_i^{\ell-1}$. Since the number of choices for $T$ is at most 
$$\sum_{0\leq j\leq n^{1/\ell}/\log n}\binom{i-1}{j}\leq 2\binom{i-1}{n^{1/\ell}/\log n}\leq 2\binom{n}{n^{1/\ell}/\log n}\leq 2^{n^{1/\ell}},$$
we then have
\begin{equation}\label{lcycleI2}
M_i\leq \sum_{T}\binom{C(T)}{d_i}\leq\sum_T\binom{\frac{(2^{\ell}+1)n}{d_i^{\ell-1}}}{d_i}
\leq \sum_T 2^{\frac{\ell}{1.88}\left((2^{\ell}+1)en\right)^{1/\ell}}
\leq \sum_T 2^{3\ell n^{1/\ell}}
\leq 2^{\left(3\ell+1\right)n^{1/\ell}}
\end{equation}
for every $i\in \mathcal{I}_2$, where the third inequality is given by Lemma~\ref{optimize}.

Combining (\ref{lcycleI1}) and (\ref{lcycleI2}), we obtain that the number of choices for $S$ is 
\begin{equation*}
\prod_{i=2}^nM_i\leq \prod_{i\in\mathcal{I}_1}M_i\prod_{i\in\mathcal{I}_2}M_i 
\leq 2^{n^{1+1/\ell}}2^{\left(3\ell+1\right)n^{1+1/\ell}}
\leq 2^{\left(3\ell+2\right)n^{1+1/\ell}}.
\end{equation*}
Hence, together with (\ref{girth: minorder}), the total number of certificates is at most
\[
n!(\alpha n^{1/\ell})^n\prod_{i=2}^nM_i
\leq 2^{n^{1+1/\ell}}2^{\left(3\ell+2\right)n^{1+1/\ell}}
\leq 2^{3(\ell+1)n^{1+1/\ell}}
\]
for $n$ sufficiently large, which leads to $|\mathcal{G}_n(2\ell, L)|\leq 2^{3(\ell+1)n^{1+1/\ell}}.$\qed
\section{Hypergraph Enumeration}
In this section, we study the enumeration problems of $r$-graphs with given girth and $r$-graphs without $C^r_4$'s. To prove it, we need a result on the linear Tur\'an number of linear cycles given by Collier-Cartaino, Graber and Jiang~\cite{jiangturan}.
\begin{thm}\cite{jiangturan}\label{turannumber}
For every $r, \ell\geq 3$, there exists a constant $\alpha_{r, \ell}>0$, depending on $r$ and $\ell$, such that 
$$\mathrm{ex}_L(n, C^r_\ell)\leq \alpha_{r,\ell}n^{1+\frac{1}{\lfloor \ell/2\rfloor}}.$$
\end{thm}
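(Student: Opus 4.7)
The plan is to prove the contrapositive: any linear $r$-graph $H$ on $n$ vertices with more than $\alpha_{r,\ell}n^{1+1/k}$ edges, where $k=\lfloor\ell/2\rfloor$ and $\alpha_{r,\ell}$ is a sufficiently large constant, must contain a copy of $C^r_\ell$. The overall strategy adapts the classical Bondy--Simonovits BFS argument to the linear hypergraph setting. As a preliminary step, by iteratively discarding vertices whose degree is below half the current average, one passes to an induced subhypergraph $H'$ on $n'\leq n$ vertices of minimum degree at least $d:=c\,\alpha_{r,\ell}(n')^{1/k}$ for a suitable constant $c=c(r)$, while losing only a constant fraction of the edges.

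Fix a root $v_0\in V(H')$ and grow a rooted tree $T$ of linear paths level by level. Level $0$ is $\{v_0\}$, and from each vertex $u$ at level $t<k$ the tree is extended along every edge $e\ni u$ for which $e\setminus\{u\}$ avoids all previously-visited vertices; the $r-1$ new vertices then sit at level $t+1$. Linearity of $H$ ensures that each admissible extension produces a genuine linear path of length $t+1$ from $v_0$ in $H'$, and that extensions from distinct leaves do not collide with each other. The aim is to show that the vertex count at each level grows by a factor of at least $(r-1)(d-O(1))$, so that after $k$ steps there are at least $((r-1)d)^k(1-o(1))$ vertices at level $k$, which exceeds $n'$ for $\alpha_{r,\ell}$ large enough, producing the desired contradiction.

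To establish this growth, one must bound, for each leaf $u$ at level $t$, the number of \emph{bad} edges through $u$ that meet $T$ in more than one vertex. Each such bad edge produces, together with the unique linear paths in $T$ from $v_0$ to the vertices it hits, a short linear cycle of length between $3$ and $2k+1$ passing through $u$. If the number of bad edges at some $u$ is too large, equivalently, if some vertex $w$ is reached by many linear paths from $v_0$, then one can splice two such paths with a carefully chosen pair of lengths (dictated by the parity of $\ell$) to produce a linear cycle of length exactly $\ell$, i.e.\ a copy of $C^r_\ell$, contradicting the assumption on $H$.

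The main obstacle will be this splicing step in the hypergraph setting: combining two linear paths must yield a cycle of the \emph{exact} prescribed length $\ell$ and must not accidentally create edge intersections of size $\geq 2$ that would violate the definition of $C^r_\ell$. This is subtler than in the graph case because the ``pivot'' where two paths meet can be an interior vertex of a hyperedge rather than an endpoint, and because even and odd $\ell$ require separate treatment. The resolution should be a careful case analysis on where two linear paths from $v_0$ to $w$ first diverge and last reconverge, exploiting the linearity condition $|e\cap e'|\leq 1$ to rule out spurious coincidences; modulo this technical accounting, the argument proceeds as in the graph Moore-bound proof.
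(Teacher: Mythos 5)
This statement (Theorem~\ref{turannumber}) is not proved in the paper at all; it is quoted verbatim from Collier-Cartaino, Graber and Jiang~\cite{jiangturan} and used as a black box. So there is no ``paper's proof'' to compare against, and what you have written must be evaluated as a self-contained proof attempt of the cited result.

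As such, your proposal has a genuine gap at the step you yourself flag as ``the main obstacle,'' and that step is the entire content of the theorem. The minimum-degree cleaning and the BFS expansion argument are fine as far as they go, but they only yield a Moore-bound-type conclusion: if no level-to-level collision occurs, the tree out-grows $n'$, so \emph{some} linear cycle of length at most $2k+1$ (where $k=\lfloor\ell/2\rfloor$) must exist. That establishes a girth bound, not the existence of $C^r_\ell$ specifically. To get a cycle of length \emph{exactly} $\ell$ you cannot, as you suggest, ``splice two paths with a carefully chosen pair of lengths,'' because the lengths available to you are dictated by the BFS levels at which the collision happens; you do not get to choose them. The passage from ``there is a short cycle / a dense collision pattern'' to ``there is a $C_\ell$'' is the hard part of Bondy--Simonovits even for graphs -- it requires a dedicated structural lemma (a theta-graph or long-path lemma inside a bipartite level-graph), not just pigeonhole. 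Your phrase ``modulo this technical accounting, the argument proceeds as in the graph Moore-bound proof'' is precisely where the proof is missing.

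Two further soft spots worth noting. First, the claimed equivalence ``the number of bad edges at some $u$ is too large $\Longleftrightarrow$ some vertex $w$ is reached by many linear paths from $v_0$'' is not justified: a bad hyperedge may meet the tree twice along the same branch (producing a theta-like configuration, not two $v_0$--$w$ paths), or at vertices of different BFS levels, and turning these into many $v_0$--$w$ paths through a single $w$ needs an argument. Second, even after locating two linear $v_0$--$w$ paths of the right lengths, verifying that their union is a \emph{linear} cycle -- with all pairwise hyperedge intersections of size $\leq 1$ and consecutive intersections of size exactly $1$ -- is delicate exactly at the pivot vertices, as you correctly observe; your proposal does not carry out that case analysis. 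In short, the outline points in the right direction (it is in the spirit of the BFS-based proof in~\cite{jiangturan}), but the central lemma that would make it a proof is absent.
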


\subsection{Proof of Theorem~\ref{evengirth}}
Once we have Theorems~\ref{maintheoremC4} and \ref{mainthmlonger}, it is natural to think about reducing the hypergraph problems to problems on graphs and then apply our graph counting theorems. 
\begin{defi}[Shadow graph]
Given a hypergraph $H$, the shadow graph of $H$, denoted by $\partial_2(H)$, is defined as
$$\partial_2(H)=\{D: |D|=2, \exists e\in H, D\subseteq e\}.$$ 
\end{defi}

\begin{prop}\label{bijection}
Let $r\geq 3$, $\ell\geq 2$ and $H\in \mathrm{Forb}_L(n, r, 2\ell)$. For every $r$-element subset $S\in V(H)$, $S$ forms an $r$-clique in $\partial_2(H)$ if and only if $S$ is a hyperedge in $H$.
\end{prop}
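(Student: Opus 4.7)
The plan is to dispose of the easy direction immediately and then prove the converse by a short case analysis that only needs to rule out a linear $3$-cycle, which is guaranteed since $\ell \geq 2$ implies girth larger than $4$.

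The ``if'' direction is trivial: if $S \in E(H)$, then every pair of vertices of $S$ is contained in the edge $S$, so each such pair is in $\partial_2(H)$ and $S$ spans an $r$-clique. For the converse, suppose $S=\{v_1,\dots,v_r\}$ is an $r$-clique in $\partial_2(H)$. For each pair $\{v_i,v_j\}\subseteq S$, pick a hyperedge $e_{ij}\in E(H)$ with $\{v_i,v_j\}\subseteq e_{ij}$; this exists because $v_iv_j\in\partial_2(H)$. The one hypothesis I will use repeatedly is linearity: any two distinct edges of $H$ meet in at most one vertex (so in fact each $e_{ij}$ is the unique edge through that pair, but I will not need this).

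Focus on the $r-1$ edges $e_{12},e_{13},\dots,e_{1r}$, all containing $v_1$. If they are all equal to one edge $e$, then $e\supseteq\{v_1,\dots,v_r\}=S$, and since $|e|=r=|S|$ we conclude $e=S$, so $S$ is a hyperedge, as desired. Otherwise, after relabelling, $e_{12}\neq e_{13}$. By linearity these two edges meet only in $v_1$, hence $v_3\notin e_{12}$ and $v_2\notin e_{13}$. Now consider $e_{23}$: the equalities $e_{23}=e_{12}$ and $e_{23}=e_{13}$ would force $v_3\in e_{12}$ and $v_2\in e_{13}$ respectively, both contradictions. Therefore $e_{12},e_{13},e_{23}$ are three pairwise distinct edges, and linearity forces the pairwise intersections to be exactly
\[
e_{12}\cap e_{13}=\{v_1\},\quad e_{12}\cap e_{23}=\{v_2\},\quad e_{13}\cap e_{23}=\{v_3\}.
\]
This is precisely a linear $3$-cycle $C_3^r$ in $H$. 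Since $H\in\mathrm{Forb}_L(n,r,2\ell)$ with $\ell\geq 2$, the girth of $H$ exceeds $2\ell\geq 4>3$, so $H$ contains no $C_3^r$: contradiction.

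There is essentially no technical obstacle here; the only choice that matters is organising the argument around a single vertex ($v_1$) so that the forbidden substructure produced is just a $C_3^r$, which is automatically ruled out by the hypothesis for every $\ell\geq 2$.
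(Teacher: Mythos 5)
Your proof is correct and takes essentially the same route as the paper: in both, the key step is to show that if $S$ is a clique in $\partial_2(H)$ but not a hyperedge, then two pairs from $S$ sharing a vertex lie in distinct hyperedges, and together with the hyperedge through the third pair they form a $C_3^r$, contradicting the girth hypothesis. Your version is slightly tidier in that fixing $v_1$ and examining $e_{12},\dots,e_{1r}$ sidesteps the ``without loss of generality the two pairs share a vertex'' reduction the paper performs.
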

\begin{proof}
Assume that there exists a $r$-clique with vertex set $S$ in $\partial_2(H)$ and two edges $e_1, e_2$ such that $e_1, e_2$ lie on two different hyperedges $f_1$, $f_2$. Without loss of generality, we can assume that $e_1$ and $e_2$ share a common vertex, as otherwise, we let  $e_1=ab$ and $e_2=cd$ and  one of the edge pairs $\{ab, ac\}$ or $\{ac, cd\}$ is contained in different hyperedges.

Let $e_1=ab\subset f_1$ and $e_2=ac\subset f_2$. Note that $c \notin f_1$ and $b\notin f_2$, as otherwise we have $f_1=f_2$ by the linearity of $H$. 
Let $f_3$ be the hyperedge which includes $bc$. Then $f_1, f_2, f_3$ are distinct, and form a $C^r_3$ by the linearity of $H$. This contradicts the fact that $H\in \mathrm{Forb}_L(n, r, 2\ell)$.
\end{proof}

We also need the following short lemma on 4-cycles of the shadow graphs of hypergraphs in $\mathrm{Forb}_L(n, r, 4)$.

\begin{lemma}\label{edgesC4}
For every $r\geq 3$, there exists a constant $\beta=\beta(r)$ such that for every $H\in \mathrm{Forb}_L(n, r, 4)$, the shadow graph $\partial_2(H)$ contains at most $\beta n^{3/2}$ 4-cycles.
\end{lemma}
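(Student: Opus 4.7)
The plan is to classify every $4$-cycle in $\partial_2(H)$ according to the number $t\in\{1,2,3,4\}$ of distinct hyperedges of $H$ that contain its four edges, then argue that the cases $t\in\{2,3,4\}$ each force a forbidden linear $C_3^r$ or $C_4^r$ in $H$, so that only $t=1$ contributes. Linearity of $H$ makes this decomposition unambiguous, since each edge of $\partial_2(H)$ lies in a unique hyperedge of $H$. Theorem~\ref{turannumber} applied with $\ell=4$ gives $|E(H)|\le\alpha_{r,4}n^{3/2}$, which will drive the final bound.

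The central structural observation is that when two cyclically consecutive edges of a $4$-cycle $abcda$ lie in distinct hyperedges $e\ne e'$, their shared endpoint belongs to $e\cap e'$, and linearity forces $|e\cap e'|\le 1$. Consequently, no unordered pair $\{e,e'\}$ can occur as the ``colors'' of two different cyclically adjacent edge pairs, since that would put two distinct vertices of the cycle into the same single-element intersection. This rules out $t=2$ at once, because any non-monochromatic cyclic $2$-coloring of length $4$ has an even positive number of color changes, forcing one color pair to repeat. It also restricts the $t=3$ patterns, up to rotation, to the single cyclic shape $(e_1,e_1,e_2,e_3)$; under this shape one reads off $\{a,b,c\}\subset e_1$, $\{c\}=e_1\cap e_2$, $\{d\}=e_2\cap e_3$, and $\{a\}=e_1\cap e_3$, exhibiting a $C_3^r$ in $H$ --- forbidden, since $H\in\mathrm{Forb}_L(n,r,4)$ has girth greater than $4$.

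The $t=4$ case is the main obstacle. If both non-adjacent intersections $e_1\cap e_3$ and $e_2\cap e_4$ are empty, then $e_1,e_2,e_3,e_4$ together form a $C_4^r$ in $H$, again forbidden. Otherwise, by symmetry we may assume $e_1\cap e_3=\{x\}$; a quick check shows $x\notin\{a,b,c,d\}$ (for example $x=c$ would yield $\{b,c\}\subseteq e_1\cap e_2$, violating linearity). Then $e_1,e_3,e_4$ are three distinct hyperedges meeting pairwise in the three distinct vertices $x, d, a$, so they form a $C_3^r$ in $H$, which is again forbidden. The symmetric subcase $e_2\cap e_4\ne\emptyset$ is handled identically using $e_2,e_4$ together with $e_1$ (or $e_3$).

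The only surviving class is $t=1$, where all four vertices of the cycle lie inside one hyperedge $e$; since $e$ spans a $K_r$ in $\partial_2(H)$ and $K_r$ contains exactly $3\binom{r}{4}$ copies of $C_4$, summing over $E(H)$ gives at most $3\binom{r}{4}\alpha_{r,4}n^{3/2}$ four-cycles in $\partial_2(H)$, so $\beta(r):=3\binom{r}{4}\alpha_{r,4}$ suffices. The principal subtlety is the cyclic color-pattern bookkeeping in cases $t=3$ and $t=4$; once the forced non-adjacent intersection vertex $x$ in the $t=4$ case is extracted, the reduction to a short linear cycle among three of the four hyperedges is immediate.
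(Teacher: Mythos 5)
Your proof is correct and takes the same route as the paper: reduce to the claim that every $4$-cycle in $\partial_2(H)$ lies inside a single hyperedge, then count via the linear Tur\'an bound $e(H)\le\alpha_{r,4}n^{3/2}$ of Theorem~\ref{turannumber}. The paper states that structural claim in one line without justification; your case analysis on the number $t$ of distinct hyperedges supporting the cycle supplies the missing argument (and your count $3\binom{r}{4}$ of $4$-cycles inside a $K_r$ is in fact the correct one, whereas the paper writes $\binom{r}{4}$ --- a harmless constant-factor slip).
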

\begin{proof}
Let $G=\partial_2(H)$. Since the girth of $H$ is larger than 4, every 4-cycle in $G$ must be contained in a hyperedge of $H$. By Theorem~\ref{turannumber}, we have $e(H)\leq \alpha_{r, 4}n^{3/2}$.
Hence, the number of 4-cycles in $G$ is at most $$\binom{r}{4}e(H)\leq \binom{r}{4}\alpha_{r, 4}n^{3/2}=\beta n^{3/2}$$
for $\beta=\binom{r}{4}\alpha_{r, 4}$.
\end{proof}

\noindent\textit{Proof of Theorem~\ref{evengirth} for $\ell=2$.}
Define a map $\varphi: \mathrm{Forb}_L(n, r, 4)\rightarrow \mathcal{G}=\{\partial_2(H): H\in \mathrm{Forb}_L(n, r, 4)\}$ given by $\varphi(H)=\partial_2(H)$. By Proposition~\ref{bijection}, $\varphi$ is a bijection. Note that by Lemma~\ref{edgesC4}, every graph in $\mathcal{G}$ has at most $\beta n^{3/2}$ 4-cycles, where $\beta$ is a constant depending on $r$. Applying Theorem~\ref{maintheoremC4}, when $n$ is sufficiently large, we have  
$$|\mathcal{G}|\leq 2^{11n^{3/2}}.$$
Hence, we obtain that $|\mathrm{Forb}_L(n, r, 4)|=|\mathcal{G}|\leq 2^{11n^{3/2}}$ for $n$ sufficiently large, which completes the proof.\qed
\\

\noindent\textit{Proof of Theorem~\ref{evengirth} for $\ell\geq 3$.}
Define a map $\varphi: \mathrm{Forb}_L(n, r, 2\ell)\rightarrow \mathcal{G}=\{\partial_2(H): H\in \mathrm{Forb}_L(n, r, 2\ell)\}$ given by $\varphi(H)=\partial_2(H)$. By Proposition~\ref{bijection}, $\varphi$ is a bijection. 
For a graph $G=\partial_2(H)\in \mathcal{G}$ and an edge $uv\in E(G)$, since the girth of $H$ is larger than $2\ell$, each $k$-cycle in $G$, which contains edge $uv$,  must be contained in a hyperedge of $H$, for all $3\leq k\leq 2\ell$. Indeed, this hyperedge is unique by the linearity of $H$. Therefore, we have 
$$c_k(u, v; G)\leq \binom{r-2}{k-2}$$
for all $3\leq k\leq 2\ell$. Applying Theorem~\ref{mainthmlonger}, when $n$ is sufficiently large, we have $$|\mathcal{G}|\leq 2^{3(\ell+1)n^{1+1/\ell}}.$$
Hence, we obtain that $|\mathrm{Forb}_L(n, r, 2\ell)|=|\mathcal{G}|\leq 2^{3(\ell+1)n^{1+1/\ell}}$ for $n$ sufficiently large, which completes the proof.\qed

\subsection{Proof of Theorem~\ref{4cycle}}
We now estimate the number of $r$-graphs without $C^r_4$. The main idea is the same as in the previous section: we convert the hypergraph enumeration problem to a graph enumeration problem and then apply Theorem~\ref{maintheoremC4}. However, because of the possible existence of $C^{r}_3$'s, some facts we used before is no longer trivial and even not true. The first difficulty is to give an upper bound on the number of 4-cycles in shadow graphs, and we need the following lemma on the number of $C^r_3$'s.

\begin{lemma}\label{cyclecontaine}
Let $r\geq 3$. For every $H\in \mathrm{Forb}_L(n, C^r_4)$ and every edge $e\in E(H)$, the number of $C^r_3$'s in $H$ containing $e$ as an edge is at most $$\binom{r}{2}(4r^2-10r+7).$$
\end{lemma}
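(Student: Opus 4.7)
The plan is to fix an edge $e \in E(H)$ and count the $C_3^r$'s containing $e$ by summing over the pair of ``contact vertices'' on $e$. First I would observe that in any $C_3^r$ through $e$ with other edges $e_2, e_3$, linearity forces $e \cap e_2 = \{a\}$, $e \cap e_3 = \{b\}$ with $a \neq b$ (else the three corners of the cycle would collapse), and the remaining corner $c = e_2 \cap e_3$ necessarily lies outside $e$ (since $c \in e$ would force $c = a$ and $c = b$ simultaneously). Thus
\[
\#\{C_3^r \supset e\} = \sum_{\{a, b\} \subset e} |D_{a, b}|,
\]
where $D_{a, b}$ is the set of $c \in V(H) \setminus e$ such that both $\{a, c\}$ and $\{b, c\}$ are covered by (necessarily unique) edges $A_c, B_c$. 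Since there are $\binom{r}{2}$ such pairs, it suffices to prove $|D_{a, b}| \leq 4r^2 - 10r + 7$.

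The key claim I would establish using $C_4^r$-freeness is the following: \emph{if $c_1, c_2 \in D_{a, b}$ are distinct and $A_{c_1}, A_{c_2}, B_{c_1}, B_{c_2}$ are four distinct edges, then $A_{c_1} \cap B_{c_2} \neq \emptyset$ or $A_{c_2} \cap B_{c_1} \neq \emptyset$.} Indeed, the cycle $A_{c_1} - A_{c_2} - B_{c_2} - B_{c_1}$ has consecutive intersections $\{a\}, \{c_2\}, \{b\}, \{c_1\}$ — four distinct single vertices by linearity — so if the opposite-edge intersections were both empty, these four edges would constitute a $C_4^r$, contradicting the hypothesis.

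Next, picking any anchor $c_0 \in D_{a, b}$ (if the set is non-empty) and writing $A_0 = A_{c_0}, B_0 = B_{c_0}$, I would partition $D_{a, b} \setminus \{c_0\}$ into three classes based on whether $A_c = A_0$, or $A_c \neq A_0$ with $B_c = B_0$, or neither. The first two classes each yield at most $r - 1$ values (since $c$ sits in $A_0 \setminus \{a\}$ or $B_0 \setminus \{b\}$). For the third class, the key claim supplies a crossing vertex $v$ in $A_0 \cap B_c$ or in $A_c \cap B_0$; in the former subcase $v$ has at most $r - 1$ choices in $A_0 \setminus \{a\}$, then $B_c$ is determined as the unique edge through $\{b, v\}$, and $c \in B_c \setminus \{b\}$ gives at most $r - 1$ further choices, yielding $(r-1)^2$ per subcase and $2(r-1)^2$ in total. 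Combining,
\[
|D_{a, b}| \leq 1 + 2(r - 1) + 2(r - 1)^2 = 2r^2 - 2r + 1 \leq 4r^2 - 10r + 7
\]
for $r \geq 3$, so multiplying by $\binom{r}{2}$ finishes the proof.

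The main obstacle is the careful verification of the key claim, which requires checking that the four proposed edges in cyclic order really do form a genuine $C_4^r$. One must confirm that the four corner vertices $a, c_2, b, c_1$ are pairwise distinct, that consecutive intersections have size exactly $1$, and that precisely the non-consecutive intersections are the ones whose emptiness is needed; these facts all follow from the linearity of $H$, the distinctness of $a, b, c_1, c_2$, and the standing assumption that the four edges are themselves distinct. A secondary subtlety in Class~(iii) is that the two subcases may overlap, but a crude union bound is already enough for the constant we need.
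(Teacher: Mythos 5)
Your proposal is correct and follows essentially the same plan as the paper's proof: decompose by the contact pair $\{a,b\}\subset e$, fix an anchor triangle, use $C_4^r$-freeness to force every other triangle to meet the anchor outside $\{a,b\}$, and then count. Your explicit three-class split (shared $A$-edge, shared $B$-edge, genuine crossing) is a slightly finer bookkeeping than the paper's single ``pick a vertex $w$ in the overlap'' step, and it yields the sharper bound $2r^2-2r+1\leq 4r^2-10r+7$.
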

\begin{proof}
For every distinct $u, v\in e$, let $$\mathcal{C}_{u, v}=\{\{e, f_i, g_i\}\subseteq H: e\cap f_i=\{u\}, e\cap g_i=\{v\}, |f_i\cap g_i|=1\}.$$
Suppose $\mathcal{C}_{u,v}$ is nonempty, and fix a $C_0=\{e, f_0, g_0\}\in \mathcal{C}_{u,v}$. For every $C=\{e, f_i, g_i\}\in \mathcal{C}_{u,v}\setminus\{C_0\}$,  we know that $$(f_0\cup g_0)\cap(f_i\cup g_i)-\{u, v\}\neq\emptyset,$$ otherwise, $\{f_0, g_0, f_i, g_i\}$ would form a $C^r_4$. 
Let $w$ be a vertex in $(f_0\cup g_0)\cap(f_i\cup g_i)-\{u, v\}$. Since $w\in f_0\cup g_0-\{u, v\}$, there are at most $2r-3$ choices for $w$. By linearity of $H$, the number of linear 3-cycles in $\mathcal{C}_{u,v}$ containing $w$ is at most $2(r-1)$. Therefore, we get 
\begin{equation}\label{Cuv}
|\mathcal{C}_{u,v}|\leq 1+2(r-1)(2r-3)=4r^2-10r+7.
\end{equation}
Hence, the number of $C^r_3$'s in $H$ containing $e$ as an edge is equal to 
$$\sum_{u, v\in e}|\mathcal{C}_{u, v}|\leq \binom{r}{2}(4r^2-10r+7).$$
\end{proof}
\begin{prop}
For $H\in \mathrm{Forb}_L(n, C^r_4)$, every 4-cycle in $\partial_2(H)$ must be contained in a hyperedge or a $C^r_3$ of $H$.
\end{prop}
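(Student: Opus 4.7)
Let $G = \partial_2(H)$ and fix a 4-cycle on vertices $a,b,c,d$ with edges $ab,bc,cd,da \in E(G)$. By the linearity of $H$, each of these pairs lies in a \emph{unique} hyperedge; denote these hyperedges by $e_1, e_2, e_3, e_4$ respectively (so $\{a,b\}\subseteq e_1$, $\{b,c\}\subseteq e_2$, $\{c,d\}\subseteq e_3$, $\{d,a\}\subseteq e_4$). The plan is to run a case analysis on the number of distinct elements in $\{e_1,e_2,e_3,e_4\}$.

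First I would dispose of the easy structural coincidences via linearity. If $e_1=e_3$, then $\{a,b,c,d\}\subseteq e_1$, which forces (by uniqueness) $e_2=e_4=e_1$ and we are in the single-hyperedge case; $e_2=e_4$ is symmetric. If $e_1=e_2$ and $e_3=e_4$ with $e_1\neq e_3$, then $\{a,c\}\subseteq e_1\cap e_3$, contradicting linearity. If three consecutive coincide, say $e_1=e_2=e_3$, then $\{a,b,c,d\}\subseteq e_1$ and linearity forces $e_4=e_1$. So the remaining scenarios are (A) all four equal, (B) exactly one consecutive pair equal (three distinct values), and (C) all four distinct.

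Scenario (A) puts all four cycle vertices inside a single hyperedge, which is the first outcome of the proposition. For scenario (B), by symmetry I may assume $e_1=e_2$ with $e_3,e_4$ distinct from $e_1$ and from each other. Then $\{a,b,c\}\subseteq e_1$, and linearity forces $e_1\cap e_3=\{c\}$, $e_3\cap e_4=\{d\}$, $e_4\cap e_1=\{a\}$, with $a,c,d$ pairwise distinct. Hence $\{e_1,e_3,e_4\}$ is a $C_3^r$ and $\{a,b,c,d\}\subseteq e_1\cup e_3\cup e_4$, giving the second outcome.

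For scenario (C), by linearity consecutive pairs share exactly the expected cycle vertex ($e_1\cap e_2=\{b\}$, etc.), so if both $e_1\cap e_3=\emptyset$ and $e_2\cap e_4=\emptyset$, then $e_1,e_2,e_3,e_4$ form a $C_4^r$, contradicting $H\in\mathrm{Forb}_L(n,C_4^r)$. So, without loss of generality, pick $w\in e_1\cap e_3$. A quick sub-check rules out $w\in\{a,b,c,d\}$: e.g., $w=a$ would give $\{a,c,d\}\subseteq e_3$, forcing $e_4=e_3$ by linearity, contrary to distinctness, and the other three cases are identical. Hence $w\notin\{a,b,c,d\}$, so the three single-vertex intersections $e_1\cap e_2=\{b\}$, $e_2\cap e_3=\{c\}$, $e_1\cap e_3=\{w\}$ are pairwise distinct, and $\{e_1,e_2,e_3\}$ is a $C_3^r$ containing the 4-cycle (since $\{a,b\}\subseteq e_1$ and $\{c,d\}\subseteq e_3$). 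The only mild obstacle is bookkeeping: making sure every equality pattern among the $e_i$ is covered and that linearity is invoked correctly to rule out the forbidden patterns; once that is done, each surviving pattern yields either a containing hyperedge or a containing $C_3^r$.
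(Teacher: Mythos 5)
Your proof is correct, and it is more careful than the paper's own argument. The paper's proof (echoing that of Proposition~\ref{bijection}) selects two adjacent cycle edges $ab$ and $ad$ lying in distinct hyperedges $f_1,f_2$ and then appeals to ``the hyperedge which includes $bd$''; but $bd$ is a \emph{diagonal} of the 4-cycle $abcd$, not one of its edges, so there is no reason in general for $bd$ to lie in $\partial_2(H)$ --- that step only works when $bc$ and $cd$ happen to share a hyperedge. Moreover the paper's concluding sentence, claiming a contradiction with $H\in\mathrm{Forb}_L(n,C_4^r)$ after producing a $C_3^r$, appears to be carried over verbatim from the girth case: a $C_3^r$ is not a contradiction here, it is the desired second alternative. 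Your exhaustive case split on the equality pattern of $e_1,\dots,e_4$ avoids the diagonal issue entirely, since you only ever invoke hyperedges that genuinely contain cycle edges, and it correctly isolates scenario (C) --- all four $e_i$ distinct --- as the one place where the $C_4^r$-free hypothesis is indispensable (if $e_1\cap e_3=\emptyset$ and $e_2\cap e_4=\emptyset$, the $e_i$ would themselves form a $C_4^r$, and absent the hypothesis the 4-cycle would lie in no hyperedge and no $C_3^r$). What your approach buys is a complete and transparent argument that makes the role of the hypothesis explicit; the cost is a longer case split, but your scenarios (A) and (B) are disposed of by linearity alone and are quick, so this is a small price.
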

\begin{proof}
Assume that a 4-cycle $abcd$ is not contained in any hyperedge of $H$. Then there exist two edges $e_1$ and $e_2$ which lie on two different hyperedges $f_1$ and $f_2$. Without loss of generality, we can assume that $e_1=ab\subset f_1$, and $e_2=ad\subset f_2$. Note that $d \notin f_1$ and $b\notin f_2$, as otherwise we have $f_1=f_2$ by the linearity of $H$. 
Let $f_3$ be the hyperedge which includes $bd$. Then $f_1, f_2, f_3$ are distinct, and form a $C^r_3$ by the linearity of $H$. This contradicts the fact that $H\in \mathrm{Forb}_L(n, C^r_4)$.
\end{proof}

\begin{lemma}\label{c4}
For every $r\geq 3$, there exists a constant $\beta=\beta(r)$ such that for every $H\in \mathrm{Forb}_L(n, C^r_4)$, the shadow graph $\partial_2(H)$ contains at most $\beta n^{3/2}$ 4-cycles.
\end{lemma}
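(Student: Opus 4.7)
The plan is to combine the proposition just proved (every 4-cycle in $\partial_2(H)$ lies either inside a single hyperedge or inside the vertex set of some linear 3-cycle $C_3^r$ of $H$) with the linear Tur\'an bound from Theorem~\ref{turannumber} and the edge-containment bound from Lemma~\ref{cyclecontaine}. Since both the number of hyperedges and the number of $C_3^r$'s are $O(n^{3/2})$, and each such object contributes only $O_r(1)$ 4-cycles to $\partial_2(H)$, summing the two contributions yields the claimed bound.

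First I would count the 4-cycles that sit inside a single hyperedge. A hyperedge has $r$ vertices and therefore spans at most $\binom{r}{4}$ copies of $C_4$ in $\partial_2(H)$. By Theorem~\ref{turannumber} we have $e(H) \leq \alpha_{r,4}\, n^{3/2}$, so the total contribution of this type is at most $\binom{r}{4}\alpha_{r,4}\, n^{3/2}$.

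Next I would count 4-cycles contained in the vertex set of some $C_3^r$ but not in any single hyperedge. The vertex set of a $C_3^r$ has $3r-3$ vertices, so each $C_3^r$ spans at most $\binom{3r-3}{4}$ copies of $C_4$ in $\partial_2(H)$. To bound the number of $C_3^r$'s, use Lemma~\ref{cyclecontaine}: each edge $e\in E(H)$ lies in at most $\binom{r}{2}(4r^2-10r+7)$ copies of $C_3^r$, so by double counting the number of $C_3^r$'s in $H$ is at most
\[
\frac{e(H)\,\binom{r}{2}(4r^2-10r+7)}{3} \leq \frac{\alpha_{r,4}\,\binom{r}{2}(4r^2-10r+7)}{3}\, n^{3/2}.
\]
Multiplying by $\binom{3r-3}{4}$ gives a contribution of $O_r(n^{3/2})$.

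Adding the two contributions and defining $\beta = \beta(r)$ to absorb all $r$-dependent constants (for instance $\beta = \binom{r}{4}\alpha_{r,4} + \binom{3r-3}{4}\alpha_{r,4}\binom{r}{2}(4r^2-10r+7)/3$) yields the stated bound. I do not anticipate a real obstacle here; the proof is a direct bookkeeping consequence of the preceding proposition together with Theorem~\ref{turannumber} and Lemma~\ref{cyclecontaine}. The only minor subtlety is making sure not to double count: a 4-cycle lying simultaneously in a hyperedge and in a $C_3^r$ is harmless because we are proving an upper bound, so the two counts may be summed without difficulty.
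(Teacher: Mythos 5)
Your proof is correct and follows essentially the same route as the paper: invoke the preceding proposition to classify each $4$-cycle as lying in a hyperedge or in a $C_3^r$, bound the number of hyperedges by Theorem~\ref{turannumber}, bound the number of $C_3^r$'s via Lemma~\ref{cyclecontaine} and double counting, and multiply by the trivial per-object $4$-cycle counts $\binom{r}{4}$ and $\binom{3r-3}{4}$. Your constant is in fact slightly tighter than the paper's (which carries some redundant factors of $3$), but the argument is the same.
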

\begin{proof}
Let $G=\partial_2(H)$. 
We first claim that every 4-cycle in $G$ is contained in a hyperedge or a $C^r_3$ of $H$.
By Lemma~\ref{cyclecontaine}, there are at most $$\frac13\binom{r}{2}\left(4r^2-10r+7\right)e(H)$$ $C^r_3$'s in $H$. Since $H$ is linear and contains no $C^r_4$, every 4-cycle in $G$ must be contained in a hyperedge or a $C^r_3$ of $H$. Moreover, by Theorem~\ref{turannumber}, we have $$e(H)\leq\alpha_{r, 4}n^{3/2}.$$
Hence,  the number of 4-cycles in $G$ is at most 
$$3\binom{r}{4}e(H)+3\binom{3r-3}{4}\cdot\frac13\binom{r}{2}(4r^2-10r+7)e(H)\leq \beta n^{3/2}$$
for $$\beta=\left[3\binom{r}{4}+\binom{3r-3}{4}\binom{r}{2}(4r^2-10r+7)\right]\alpha_{r, 4},$$ where $\alpha_{r, 4}$ is a constant defined in Theorem~\ref{turannumber}.
\end{proof}
Another difficulty is that the map we defined in the proof of Theorem~\ref{evengirth} might be no longer injective. To overcome it, we have the following lemma to measure how far the map is from the injection.
\begin{lemma}\label{numberkr}
For every $r\geq 3$, there exists a constant $\alpha=\alpha(r)$ such that for every $H\in \mathrm{Forb}_L(n, C^r_4)$, there are at most $\alpha n^{3/2}$ $r$-cliques in $\partial_2(H)$. 
\end{lemma}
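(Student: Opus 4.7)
The plan is to split the $r$-cliques in $G := \partial_2(H)$ into two types: call an $r$-clique $S$ \emph{genuine} if $S \in E(H)$, and \emph{fake} otherwise. By Theorem~\ref{turannumber}, the number of genuine $r$-cliques is at most $e(H) \leq \alpha_{r,4}\, n^{3/2}$, which already matches the target, so the task reduces to bounding the number of fake $r$-cliques by $O(n^{3/2})$.

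First I would show that every fake $r$-clique $S$ contains a triple $\{a, b, c\} \subseteq S$ whose three covering hyperedges $f_1 \supset \{a,b\}$, $f_2 \supset \{a,c\}$, $f_3 \supset \{b,c\}$ (each unique by linearity) are pairwise distinct, hence form a $C_3^r$ with corners $a, b, c$. The argument follows that of Proposition~\ref{bijection}: since $S$ is not a hyperedge, not all $\binom{r}{2}$ pairs of $S$ lie in a single hyperedge, so some two edges of $S$ lie in distinct hyperedges, and may be taken to share a vertex; linearity then forces a third distinct hyperedge through the remaining pair. Combining Lemma~\ref{cyclecontaine} with Theorem~\ref{turannumber} then bounds the total number of $C_3^r$'s in $H$ by $\tfrac{1}{3}\binom{r}{2}(4r^2 - 10r + 7)\alpha_{r,4}\, n^{3/2} = O(n^{3/2})$.

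The main step is to bound, for each $C_3^r$ with corners $\{a, b, c\}$ and edges $f_1, f_2, f_3$, the number of vertices $v \in V(H) \setminus \{a, b, c\}$ with $va, vb, vc \in E(G)$ by a constant $M = M(r)$; each fake $r$-clique with this corner triple is then obtained by selecting $r-3$ such vertices, yielding at most $\binom{M}{r-3}$ cliques per triangle. I split on whether $v \in f_1 \cup f_2 \cup f_3$. The \emph{inside} case contributes at most $|f_1 \cup f_2 \cup f_3 \setminus \{a,b,c\}| = 3(r-2)$ vertices. For the \emph{outside} case, let $g_1, g_2, g_3$ be the unique hyperedges containing $va, vb, vc$. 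Linearity together with $v \notin f_1 \cup f_2 \cup f_3$ forces the $g_i$ to be pairwise distinct and distinct from every $f_j$. The four hyperedges $g_1, f_1, f_3, g_3$ then have consecutive intersections $\{a\},\{b\},\{c\},\{v\}$, so they form a linear $C_4^r$ \emph{unless} one of the two diagonal intersections $g_1 \cap f_3$ or $f_1 \cap g_3$ is nonempty. Since $H$ is $C_4^r$-free, at least one diagonal contains a vertex $x$. In the first case one checks $x \in f_3 \setminus \{b,c\}$ (for instance $x = b$ would give $\{a,b\} \subset g_1$, forcing $g_1 = f_1$), which gives $r-2$ choices for $x$; linearity then determines $g_1 = f_{ax}$, after which $v \in g_1 \setminus \{a,x\}$ gives $r-2$ choices. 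The symmetric analysis of $f_1 \cap g_3$ gives the same bound, so $M(r) \leq 3(r-2) + 2(r-2)^2$.

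The hard part will be Case B of the extension count: one must verify that the four hyperedges $g_1, f_1, f_3, g_3$ are genuinely distinct, that their consecutive intersections are singletons (both following from linearity combined with the $C_3^r$ structure), and that the extra vertex $x$ in a diagonal intersection is really disjoint from $\{a,b,c,v\}$, so it does give a new constraint rather than a spurious coincidence. Once $M(r)$ is bounded, the fake $r$-cliques number at most $\binom{M(r)}{r-3}\cdot O(n^{3/2}) = O(n^{3/2})$, and adding the $O(n^{3/2})$ genuine cliques gives the desired $\alpha\, n^{3/2}$ bound with $\alpha = \alpha(r)$ absorbing all $r$-dependent constants.
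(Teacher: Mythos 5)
Your proposal is correct, and it takes a genuinely different route from the paper's. The paper anchors each $r$-clique $F$ of $\partial_2(H)$ at a hyperedge $e$ maximizing $|F\cap e|$ (so that $|F\cap e|\geq 2$), fixes two vertices $u,v\in F\cap e$, and then reconstructs $F$ by adding the remaining $r-|F\cap e|$ vertices one at a time: each such vertex $w$ is forced to be the third corner of a $C_3^r$ through $e$ whose intersections with $e$ are $u$ and $v$, and the number of candidates for $w$ is bounded by the estimate $|\mathcal{C}_{u,v}|\leq 4r^2-10r+7$ from the proof of Lemma~\ref{cyclecontaine}. Summing $|\mathcal{F}_e|\leq 2^r(4r^2)^r$ over $e\in E(H)$ and invoking $e(H)=O(n^{3/2})$ finishes it. You instead split cliques into genuine (actual hyperedges, at most $e(H)$) and fake, show every fake clique contains a corner triple of a $C_3^r$, bound the number of $C_3^r$'s by $O(n^{3/2})$ via Lemma~\ref{cyclecontaine} and Theorem~\ref{turannumber}, and then bound the common $G$-neighbourhood of the three corners of any $C_3^r$ by a constant using the $C_4^r$-freeness directly. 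Both proofs rest on the same two pillars — the linear Tur\'an bound $e(H)=O(n^{3/2})$ and a local bound on $C_3^r$'s coming from $C_4^r$-freeness — but they use the second pillar differently: the paper uses it only to bound candidates for each added vertex, while you need a separate extension-counting lemma (your inside/outside case split). The verifications you flag as the "hard part" of the outside case do all go through by routine linearity checks: $g_1, f_1, f_3, g_3$ are pairwise distinct since $v\notin f_1\cup f_2\cup f_3$ and coinciding any pair would force a contradiction via linearity; the four cyclic intersections are then singletons; and the diagonal witness $x$ avoids $b$ (else $g_1=f_1$), $c$ (else $g_1=g_3$), $a$ (since $a\notin f_3$), and $v$ (since $v\notin f_3$). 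So your route is sound; it trades the paper's single uniform sum over a base edge for a somewhat more transparent genuine/fake dichotomy at the cost of the extra extension lemma.
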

\begin{proof}
Let $G=\partial_2(H)$ and $\mathcal{F}$ be the set of $r$-cliques in $G$. For every $e\in E(H)$, let 
$$\mathcal{F}_{e}=\{F\in \mathcal{F}: |F\cap e|=\max_{f\in E(H)}|F\cap f|\}.$$
Then we have $\mathcal{F}=\bigcup_{e\in H}\mathcal{F}_e.$
Fix an arbitrary hyperedge $e\in H$. For every $2\leq q\leq r,$ let $$\mathcal{R}_q=\{F\in \mathcal{F}_e:  |F\cap e|=q\},$$ then we have $\mathcal{F}_e=\bigcup_{q=2}^r\mathcal{R}_q$.

First, it is trivial to get $|R_r|=1$.
Let $2\leq q\leq r-1$ and $F$ be an $r$-clique in $\mathcal{R}_q$. Since $|F\cap e|=q$, the number of choices for $F\cap e$ is at most $\binom{r}{q}$. Given $F\cap e$, let $u, v$ be two distinct vertices in $F\cap e$. For every $w\in F-e$, by the definition of the shadow graph and the linearity of $H$, there exist hyperedges $f, g$ such that $\{e,f,g\}$ forms a $C^r_3$ with $e\cap f=u, e\cap g=v$ and $f\cap g=w$. 
By (\ref{Cuv}), the number of such $C^r_3$'s is at most $4r^2-10r+7.$
Therefore, the choices of $w$ is at most $4r^2-10r+7$. Hence, we have 
$$\mathcal{R}_q\leq\binom{r}{q}(4r^2-10r+7)^{r-q}.$$
Then, we obtain $$|\mathcal{F}_e|=\sum_{q=2}^r|\mathcal{R}_q|\leq \sum_{r=2}^{r-1}\binom{r}{q}(4r^2-10r+7)^{r-q}+1\leq 2^r(4r^2)^r.$$
Finally, we get 
$$|\mathcal{F}|=\sum_{e\in E(H)}|\mathcal{F}_e|\leq 2^r(4r^2)^re(\mathcal{H})\leq\alpha n^{3/2}$$
for $\alpha=2^r(4r^2)^r\alpha_{r,4},$ where $\alpha_{r,4}$ is the constant defined in Theorem~\ref{turannumber}.
\end{proof}

\noindent\textit{Proof of Theorem~\ref{4cycle}.}
Define a map $\varphi: \mathrm{Forb}_L(n, C^r_4)\rightarrow \mathcal{G}=\{\partial_2(H): H\in \mathrm{Forb}_L(n, C^r_4)\}$ given by $\varphi(H)=\partial_2(H)$. By Lemma~\ref{c4}, every graph $G\in\mathcal{G}$ has at most $\beta n^{3/2}$ 4-cycles, where $\beta$ is a constant depending on $r$. By Theorem~\ref{maintheoremC4}, when $n$ is sufficiently large, we have $$|\mathcal{G}|\leq 2^{11n^{3/2}}.$$
By Lemma~\ref{numberkr}, for every $G\in\mathcal{G}$, the number of $r$-cliques in $G$ is at most $\alpha n^{3/2}$, where $\alpha$ is a constant depending on $r$. Since every hyperedge corresponds to an $r$-clique in its shadow graph, we have 
$$|\varphi^{-1}(G)|\leq 2^{\alpha n^{3/2}}.$$
Finally, we obtain
$$\left|\mathrm{Forb}_L(n, C^r_4)\right|\leq \sum_{G\in\mathcal{G}}\left|\varphi^{-1}(G)\right|\leq|\mathcal{G}|2^{\alpha n^{3/2}}\leq 2^{(11+\alpha)n^{3/2}}$$
for $n$ sufficiently large, which completes the proof.\qed



\begin{thebibliography}{10}

\bibitem{BL}
{\sc J.~Balogh and L.~Li}, {\em On the number of generalized {S}idon sets},
  arXiv:1803.00659,  (2018).

\bibitem{BMS}
{\sc J.~Balogh, R.~Morris, and W.~Samotij}, {\em Independent sets in
  hypergraphs}, Journal of the American Mathematical Society, 28 (2015),
  pp.~669--709.

\bibitem{BNS}
{\sc J.~Balogh, B.~Narayanan, and J.~Skokan}, {\em The number of hypergraphs
  without linear cycles}, Journal of Combinatorial Theory, Series B,  (2018).

\bibitem{B}
{\sc F.~A. Behrend}, {\em On sets of integers which contain no three terms in
  arithmetical progression}, Proceedings of the National Academy of Sciences of
  the United States of America, 32 (1946), p.~331.

\bibitem{BS}
{\sc J.~A. Bondy and M.~Simonovits}, {\em Cycles of even length in graphs},
  Journal of Combinatorial Theory, Series B, 16 (1974), pp.~97--105.

\bibitem{jiangturan}
{\sc C.~Collier-Cartaino, N.~Graber, and T.~Jiang}, {\em Linear {T}ur{\'a}n
  numbers of r-uniform linear cycles and related ramsey numbers}, arXiv
  preprint arXiv:1404.5015,  (2014).

\bibitem{EFR}
{\sc P.~Erd{\"o}s, P.~Frankl, and V.~R{\"o}dl}, {\em The asymptotic number of
  graphs not containing a fixed subgraph and a problem for hypergraphs having
  no exponent}, Graphs and Combinatorics, 2 (1986), pp.~113--121.

\bibitem{ES}
{\sc P.~Erd{\"o}s and M.~Simonovits}, {\em Cube-supersaturated graphs and
  related problems}, Progress in graph theory (Waterloo, Ont., 1982),  (1984),
  pp.~203--218.

\bibitem{RM}
{\sc R.~Faudree and M.~Simonovits}, {\em Cycle-supersaturated graphs}.

\bibitem{ZJ}
{\sc Z.~F{\"u}redi and T.~Jiang}, {\em Hypergraph {T}ur{\'a}n numbers of linear
  cycles}, Journal of Combinatorial Theory, Series A, 123 (2014), pp.~252--270.

\bibitem{HK}
{\sc J.~Han and Y.~Kohayakawa}, {\em On hypergraphs without loose cycles},
  Discrete Mathematics, 341 (2018), pp.~946--949.

\bibitem{JY}
{\sc T.~Jiang and L.~Yepremyan}, {\em Supersaturation of even linear cycles in
  linear hypergraphs}, arXiv:1707.03091,  (2017).

\bibitem{KWilson}
{\sc D.~J. Kleitman and D.~Wilson}, {\em On the number of graphs which lack
  small cycles}, unpublished manuscript,  (1996).

\bibitem{kleitmanwinston}
{\sc D.~J. Kleitman and K.~J. Winston}, {\em On the number of graphs without
  4-cycles}, Discrete Mathematics, 41 (1982), pp.~167--172.

\bibitem{KKS}
{\sc Y.~Kohayakawa, B.~Kreuter, and A.~Steger}, {\em An extremal problem for
  random graphs and the number of graphs with large even-girth}, Combinatorica,
  18 (1998), pp.~101--120.

\bibitem{KMVper}
{\sc A.~Kostochka, D.~Mubayi, and J.~Verstra{\"e}te}, Personal Communication.

\bibitem{KMV1}
\leavevmode\vrule height 2pt depth -1.6pt width 23pt, {\em Tur{\'a}n problems
  and shadows {I}: paths and cycles}, Journal of Combinatorial Theory, Series
  A, 129 (2015), pp.~57--79.

\bibitem{kreuter}
{\sc B.~Kreuter}, {\em Extremale und Asymptotische Graphentheorie f{\"u}r
  verbotene bipartite Untergraphen}, PhD thesis, Diplomarbeit,
  Forschungsinstitut f{\"u}r Diskrete Mathematik, Universit{\"a}t Bonn, 1994.

\bibitem{LV}
{\sc F.~Lazebnik and J.~Verstra{\"e}te}, {\em On hypergraphs of girth five},
  Electronic journal of combinatorics, 10 (2003), p.~25.

\bibitem{MS}
{\sc R.~Morris and D.~Saxton}, {\em The number of ${C}_{2\ell}$-free graphs},
  Advances in Mathematics, 298 (2016), pp.~534--580.

\bibitem{MW}
{\sc D.~Mubayi and L.~Wang}, {\em The number of triple systems without even
  cycles}, arXiv:1701.00269,  (2017).

\bibitem{PTW}
{\sc C.~Palmer, M.~Tait, C.~Timmons, and A.~Z. Wagner}, {\em Tur{\'a}n numbers
  for {B}erge-hypergraphs and related extremal problems}, arXiv:1706.04249,
  (2017).

\bibitem{RS}
{\sc I.~Z. Ruzsa and E.~Szemer{\'e}di}, {\em Triple systems with no six points
  carrying three triangles}, Combinatorics (Keszthely, 1976), Coll. Math. Soc.
  J. Bolyai, 18 (1978), pp.~939--945.

\bibitem{ST}
{\sc D.~Saxton and A.~Thomason}, {\em Hypergraph containers}, Inventiones
  mathematicae, 201 (2015), pp.~925--992.

\end{thebibliography}

\end{document}